\documentclass[a4paper,12pt]{article}
\usepackage{graphicx} % Required for inserting images

\usepackage{amsmath,amssymb,amsfonts,amsthm}
\usepackage{multirow,bigdelim}
\usepackage{amscd}
\usepackage{verbatim}
\usepackage{latexsym}
\usepackage{array}
\usepackage{enumerate}
\usepackage[all]{xy}
\usepackage{graphicx}
\usepackage{ulem}
\usepackage{ascmac}
\usepackage{mathtools}
\usepackage{layout}
\usepackage{bbm,enumitem,empheq,nicematrix,relsize,calligra}
\usepackage[initials,alphabetic]{amsrefs}
   \BibSpec{arXiv}{
   +{}{\PrintAuthors}{author}
   +{,}{ \textit}{title}
   +{,}{ }{date}
   +{,}{ arXiv:}{eprint}}
\usepackage{mleftright}
   \mleftright
\usepackage{tikz}
\usetikzlibrary{cd,decorations,decorations.pathreplacing,arrows,calc,intersections,hobby,decorations.pathmorphing} 
\usepackage{adjustbox}

\numberwithin{equation}{section}

\newcommand{\BDC}{{\mathbf{D}}^{\mathrm{b}}}

\newcommand{\Hom}{\mathrm{Hom}}
\newcommand{\shom}{{\mathcal{H}}om}

\newcommand{\sect}{\Gamma}
\newcommand{\rsect}{{\mathrm{R}}\Gamma}
\newcommand{\CC}{\mathbb{C}}

\newcommand{\RR}{\mathbb{R}}

\newcommand{\ZZ}{\mathbb{Z}}

\renewcommand{\(}{\left(}
\renewcommand{\)}{\right)}

\newcommand{\Ker}{{\rm Ker}}

\newcommand{\codim}{{\rm codim}}

\newcommand{\id}{{\rm id}}

\newcommand{\Sol}{{\rm Sol}}

\newcommand{\Db}{{\bf D}^{b}}
\newcommand{\Dbc}{{\bf D}_{c}^{b}}

\newcommand{\tl}[1]{\widetilde{#1}}

\newcommand{\simto}{\overset{\sim}{\longrightarrow}}
\newcommand{\simot}{\overset{\sim}{\longleftarrow}}

\newcommand{\SD}{\mathcal{D}}

\newcommand{\SM}{\mathcal{M}}
\newcommand{\SN}{\mathcal{N}}
\newcommand{\SL}{\mathcal{L}}

\newcommand{\SG}{\mathcal{G}}

\newcommand{\rhom}{{\rm R}{\mathcal{H}}om}

\newcommand{\bfD}{\mathbf{D}}
\newcommand{\rmR}{{\mathrm{R}}}

\renewcommand{\Re}{\operatorname{Re}}

\newcommand{\cS}{\mathcal{S}}

\newcommand{\ST}{\mathcal{T}}

\newcommand{\rmd}{\mathrm{d}}
\newcommand{\bfA}{\mathbf{A}}
\newcommand{\bfB}{\mathbf{B}}

\newcommand{\pt}{\mathrm{pt}}
\renewcommand{\Ker}{\operatorname{Ker}}
\newcommand{\Ima}{\operatorname{Im}}
\newcommand{\Coker}{\operatorname{Coker}}

\newcommand{\C}{\mathrm{C}}
\newcommand{\reg}{\mathrm{reg}}

\newcommand{\sing}{\mathrm{sing}}
\newcommand{\BM}{\mathrm{BM}}
\newcommand{\infsupp}{\mathit{inf}}

\newcommand{\orsh}{\text{\raisebox{0.2mm}{\Large\textcalligra{or}}}\!\;} %% orientation sheaf in KS
\newcommand{\iorsh}{\text{\raisebox{0.2mm}{\Large\textcalligra{or}}}\!\:} %% orientation sheaf in KS (inline)
\newcommand{\ftimes}[1]{\underset{#1}{\times}} %% fiber product
\newcommand{\BDrc}{\bfD_{\RR\text{-}\mathrm{c}}^{\mathrm{b}}} %% R-constructible sheaf
\newcommand{\BDc}{\bfD_{\mathrm{c}}^{\mathrm{b}}} %% C-constructible sheaf
\newcommand{\muhom}{\mu hom}
\newcommand{\fboxtimes}[1]{\underset{#1}{\boxtimes}} %% fiber product

\DeclareMathOperator{\supp}{supp}

\DeclareMathOperator{\CCyc}{CC} %% Characteristic cycle
\DeclareMathOperator{\Eu}{Eu} %% Euler obstruction
\DeclareMathOperator{\LCLS}{LCLS} %% Locally closed subanalytic subsets
\DeclareMathOperator{\msupp}{SS} %% Microsupport

\newcommand{\vbar}{\left.\right|} 
\DeclareRobustCommand{\longtwoheadrightarrow}{\relbar\joinrel\twoheadrightarrow} 
\newcommand{\longhookrightarrow}{\lhook\joinrel\longrightarrow}
\newcommand{\tikzlim}[1]{\lim\limits_{\scalebox{0.7}{$#1$}}}

\DeclarePairedDelimiter{\abs}{\lvert}{\rvert} 
\DeclarePairedDelimiterX{\Set}[2]{\lbrace}{\rbrace}{#1\ \delimsize\vert\ #2}

\newtheorem{theorem}{Theorem}[section]

\newtheorem{corollary}[theorem]{Corollary}
\newtheorem{lemma}[theorem]{Lemma}
\newtheorem{proposition}[theorem]{Proposition}

\theoremstyle{definition}
\newtheorem{definition}[theorem]{Definition}
\theoremstyle{remark}
\newtheorem{remark}[theorem]{\sc Remark}
\newtheorem{example}[theorem]{\sc Example}
\title{Characteristic cycles 
of real and complex \\ constructible sheaves, revisited 
\footnote{{\bf 2020 Mathematics Subject Classification:
}32C38, 32S40, 32S60, 35A27.}
}

\author{Ren FERNANDES 
\footnote{Mathematical Institute, Tohoku University,
Aramaki Aza-Aoba 6-3, Aobaku, Sendai, 980-8578, Japan.
E-mail: fernandes.ren.p7@dc.tohoku.ac.jp}, 
Kazuki KUDOMI 
\footnote{Mathematical Institute, Tohoku University,
Aramaki Aza-Aoba 6-3, Aobaku, Sendai, 980-8578, Japan.
E-mail: kazuki.kudomi.q3@dc.tohoku.ac.jp}
and Kiyoshi TAKEUCHI 
\footnote{Mathematical Institute, Tohoku University,
Aramaki Aza-Aoba 6-3, Aobaku, Sendai, 980-8578, Japan.
E-mail: takemicro@nifty.com} }

\begin{document}

\maketitle
\begin{abstract}
For a smooth morphism i.e. a submersion $f: X \longrightarrow \Sigma$ 
of real analytic manifolds and 
an $\RR$-constructible sheaf $F$ on $X$ satisfying some condition, 
we define a family of Lagrangian cycles 
parameterized by $\Sigma$ that we call the relative 
characteristic cycle of $F$ for $f$. In this way, the theory of 
characteristic cycles introduced by Kashiwara 
(and developed by Kashiwara and Schapira)  
is naturally extended to the relative setting. Based on 
it, we then prove a formula for the characteristic cycles 
of real nearby cycle sheaves. This leads us to obtain also 
formulas for the characteristic cycles of various 
constructible sheaves, such as specialization, 
microlocalization, and complex nearby and vanishing 
cycle sheaves, in a unified manner. In fact, 
our methods allow us to calculate not only their 
characteristic cycles but also their microlocal types 
in many situations. We will illustrate it by various examples. 
\end{abstract}

\section{Introduction}
Characteristic cycles are basic invariants of 
$\SD$-modules and constructible sheaves and play an important role 
not only in $\SD$-module theory but also in algebraic geometry,
singularity theory, topology, representation theory and 
arithmetic geometry.
On one hand, in \cite{KS85} and 
\cite[Chapter I\hspace{-1.2pt}X]{KS90}, for $\RR$-constructible 
sheaves Kashiwara and Schapria defined their characteristic cycles 
and proved many beautiful results on them, such as their functorial 
properties and index theorems.
On the other hand, in \cite{Gin86} Ginsburg studied the 
characteristic cycles of regular holonomic $\SD$-modules 
and obtained remarkable formulas for the characteristic cycles of
localized and nearby cycle $\SD$-modules.
See also Brian\c{c}on-Maisonobe-Merle \cite{BMM94} for the further 
development on them.
As the methods of \cite{Gin86} and \cite{BMM94} are purely algebraic,
until now there has been no way to recover all the 
results in \cite{Gin86} and \cite{BMM94} by the sheaf theoretical 
methods in \cite{KS90}. This prevents us from extending them to more general situations.
Note that however in \cite[Theorem 4.2]{SV96} Schmid and Vilonen proved 
their open embedding theorem for $\RR$-constructible sheaves and 
solved this problem satisfactorily at least for Ginsburg's formula
for localized $\SD$-modules in \cite[Theorem 3.2]{Gin86}.
\par 
In this paper, we will recover the remaining 
results in \cite{Gin86} and \cite{BMM94} by the 
methods in \cite{KS90} in a unified manner and extend them to 
more general situations. First, 
by generalizing the characteristic cycles of Kashiwara and Schapira 
in \cite[Chapter I\hspace{-1.2pt}X]{KS90} to relative situations, we introduce a new 
notion of relative characteristic cycles (see \cite{FMFS21} and 
\cite{SS94} etc. for the related results on $\SD$-modules in 
the relative setting). For this purpose, we start by recalling some basic properties 
of Borel-Moore homology cycles and reformulate some results of \cite{SV96} 
in the language of \cite[Chapter I\hspace{-1.2pt}X]{KS90}. Note that 
some of them will be used for the study of 
irregular holonomic $\SD$-modules in \cite{KT25}. 
Then by Ishimura's functor of relative $\mu hom$ introduced in 
\cite{Ish92}, for a smooth morphism i.e. a submersion $f\colon X\longrightarrow \Sigma$
of real analytic manifolds and an $\RR$-constructible sheaf 
$F\in \Db_{\RR -\mathrm{c}} (X)$ on $X$ adapted fo $f$ (see Definition \ref{def-adapted}), in 
Section \ref{sec-relCC} we define the relative charactersitic cycle 
$\CCyc_{\Sigma} (F)$ of $F$ for $f$ as a Borel-Moore homology cycle 
in the relative cotangent bundle $T^\ast (X/\Sigma)$
of $f\colon X\longrightarrow \Sigma$.
Since for any point $a\in \Sigma$ of $\Sigma$ we have an isomorphism 
\begin{equation}
    f^{-1}(a)  \times_{\Sigma} T^\ast(X/\Sigma) \simto T^\ast(f^{-1}(a)),
\end{equation}
for $F_a \coloneq F|_{f^{-1}(a)} \in \Db_{\RR -\mathrm{c}}(f^{-1}(a)) \: (a\in \Sigma)$
our relative characteristic cycle $\CCyc_{\Sigma}(F)$ can be 
(at least locally) regarded as the family 
$\{ \CCyc(F_a)\}_{a\in \Sigma}$ of 
Lagrangian cycles (see Theorem \ref{thm-restofrelCC}). 
For the other variants of characteristic cycles, see 
\cite{MT10} and \cite{SS94} etc. 
\par 
Next, we apply our new theory of relative characteristic cycles 
to obtain also formulas for the characteristic cycles of various 
constructible sheaves, such as specialization, 
microlocalization, and complex nearby and vanishing 
cycle sheaves, in a unified manner. 
A key point in our study is the use of the real nearby cycle functor 
defined as follows.
For $\varepsilon >0$ let $f\colon X\longrightarrow (-\varepsilon, \varepsilon)$
be a smooth morphism of real analytic manifolds and set $X_0\coloneq 
f^{-1}(0) \subset X, \: \{ f>0\}\coloneq \{ x\in X\: \mid \: f(x)>0\} \subset X$.
Let $i_X \colon X_0 \longhookrightarrow X$ be the inclusion map.
Then for an $\RR$-constructible sheaf $F\in \Db_{\RR -\mathrm{c}}(X)$
on $X$ we set 
\begin{equation}
    \psi_f^{\RR} (F)\coloneq i_X^{-1}\rsect_{\{ f>0\}}(F) \quad \in \Db_{\RR -\mathrm{c}}(X_0)
\end{equation}
and call it the real nearby cycle sheaf of $F$ along $f$.
Note that in \cite{NS22} Nadler and Shende also studied such sheaves 
and obtained a formula for their micro-supports.
Here, we go one step further and prove the following result on 
their characteristic cycles.
For $a\in I\coloneq (0,\varepsilon) \subset \RR$ set $X_a \coloneq f^{-1}(a) \subset X$
and $F_a \coloneq F|_{X_a} \in \Db_{\RR -\mathrm{c}}(X_a)$ 
and let $\CCyc(F_a)$ be the characteristic cycle of $F_a$ in $T^\ast X_a =
T^\ast (f^{-1}(a)) \subset T^\ast(X/{(-\varepsilon, \varepsilon)})$
(for the definition, see \cite[Section 9.4]{KS90}).
Then we can show that the family $\{ \CCyc(F_a)\}_{a\in I}$ of 
Lagrangian cycles has a limit $\displaystyle \lim_{a\to +0} \CCyc(F_a)$ (locally)
in the sense of \cite{SV96} (see Section \ref{subsec-limit} for the definition) and obtain the 
following theorem.
\begin{theorem}[see {Theorem \ref{thm-limformula}}]\label{thm-limit}
    In the situation as above, we have 
    \begin{equation}
        \CCyc(\psi_f^{\RR}(F)) = \lim_{a\to +0}\CCyc(F_a)
    \end{equation}
    in $T^\ast X_0$, where this equality holds true only over each relatively compact
    open subset of $X_0$.
\end{theorem}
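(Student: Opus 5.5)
The plan is to pass through the relative characteristic cycle. Set $I=(0,\varepsilon)$, $X_I\coloneq f^{-1}(I)=\{f>0\}$, and let $j\colon X_I\longhookrightarrow X$ be the open embedding. Then $\rsect_{\{f>0\}}(F)\simeq \rmR j_\ast j^{-1}F$, so
\[
\psi_f^{\RR}(F)=i_X^{-1}\rmR j_\ast j^{-1}F .
\]
After shrinking $X$ around a relatively compact open subset of $X_0$, we may assume $j^{-1}F$ is adapted to $f|_{X_I}\colon X_I\longrightarrow I$ in the sense of Definition \ref{def-adapted}. This can be arranged for $a$ small, by curve selection applied to a $\mu$-stratification of $F$ together with the fact that $f$ is a submersion. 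Then the relative characteristic cycle $\CCyc_I(F|_{X_I})$ is defined in $T^\ast(X_I/I)$. By Theorem \ref{thm-restofrelCC}, its restriction to each fiber is $\CCyc(F_a)$. So the family $\{\CCyc(F_a)\}_{a\in I}$ is the single Borel--Moore cycle $\CCyc_I(F|_{X_I})$, and the limit $\lim_{a\to+0}\CCyc(F_a)$ in the sense of Section \ref{subsec-limit} is obtained by specializing this cycle to the boundary fiber $T^\ast X_0$. Concretely, this means taking the closure in $T^\ast(X/(-\varepsilon,\varepsilon))$ and intersecting with the fiber over $0$ via the boundary map in Borel--Moore homology.

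Next I compute $\CCyc(\rmR j_\ast j^{-1}F)$ on all of $X$. Following the reformulation of Schmid--Vilonen's open embedding theorem recalled earlier in the paper, $\CCyc(\rmR j_\ast G)$ for $G=j^{-1}F$ and $j$ the inclusion of $\{f>0\}$ is the limit as $s\to+0$ of $\CCyc(G)+s\,\rmd\log f$. Equivalently, it is the deformation of $\CCyc(G)$ along the graph of $s\,\rmd f/f$, taking the closure and letting $s\to 0$. I then restrict to $X_0$. Since $f$ is a submersion, $X_0$ is a smooth hypersurface, and the non-characteristic condition needed for $\CCyc(i_X^{-1}H)=i_X^\ast\CCyc(H)$ (pullback formula \cite[Proposition 9.4.3]{KS90}) will be checked from the micro-support estimate for $\psi_f^\RR$. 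Nadler--Shende's estimate shows that the relevant covectors lie in the limit set, and this set avoids the conormal direction $\RR\,\rmd f$ except at the zero section after the $\rmd\log f$ deformation has been taken into account. If direct non-characteristicity fails, I will instead use the alternative description $\psi^\RR_f(F)\simeq \rmR\Gamma$ along the fibers of the projection $X_{[0,\varepsilon)}\to X_0$ given by a local trivialization $X\simeq X_0\times(-\varepsilon,\varepsilon)$, and apply the direct image formula for characteristic cycles under proper maps \cite[Proposition 9.4.2]{KS90}.

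The final step is to identify the two sides. The pullback $i_X^\ast$ of $\lim_{s\to+0}\bigl(\CCyc(G)+s\,\rmd\log f\bigr)$ should be compared with the fiberwise specialization of the relative cycle. Both are cycles in $T^\ast X_0$ obtained from $\CCyc(G)$, which is a Lagrangian in $T^\ast X_I$ projecting to $\CCyc_I(G)$ in $T^\ast(X_I/I)$, by a specialization process. I will compare them on the level of Borel--Moore homology by a homotopy between the two degenerations, namely scaling $\rmd f$ away versus restricting to fibers $f=a$. Over a relatively compact set there is a conic subanalytic family interpolating them, and Borel--Moore homology is invariant under proper homotopy. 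The multiplicities can alternatively be checked at generic points of each component through the index formula: for a generic covector $p=\rmd_x\varphi$ on $X_0$, the local Euler obstruction or microlocal Euler characteristic of $\psi^\RR_f(F)$ equals that of $F_a$ at the nearby covector for small $a$. This follows from the quasi-local characterization of multiplicities via $\rsect_{\{\varphi\ge 0\}}$ \cite[Theorem 9.5.3]{KS90} combined with the commutation of $\rsect_{\{\varphi\ge0\}}$ with $\rsect_{\{f>0\}}$.

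The main obstacle is this comparison step. I need to prove that the limit of cycles in the sense of \cite{SV96} is well defined, with supports that are proper over relatively compact sets, and that it commutes with restriction to $X_0$; the properness requirement is exactly why the equality holds only over relatively compact subsets. I would attack it first via the multiplicity computation at generic points, using the stratification to control the limit set's dimension, so that the limit is Lagrangian and determined by multiplicities.
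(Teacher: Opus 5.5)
Your main route breaks when you restrict to $X_0$. The inclusion $i_X\colon X_0\hookrightarrow X$ is essentially never non-characteristic for $\rmR j_\ast j^{-1}F$. Already for $F=k_X$ you get $\rmR j_\ast j^{-1}k_X\simeq k_{\{f\geq0\}}$, whose micro-support contains the half-line $\{\lambda\,\rmd f(x)\,;\,\lambda\geq0\}$ over every $x\in X_0$. In the Schmid--Vilonen limit of $\CCyc(j^{-1}F)+s\,\rmd\log f=\CCyc(j^{-1}F)+(s/f)\,\rmd f$, the ratio $s/f$ takes all values in $(0,\infty)$ as $s,f\to+0$. So the limit typically contains such conormal covectors, which is the opposite of what you assert.

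Your fallback is false too. $\psi^{\RR}_f(F)$ is not $\rmR\pi_\ast$ of the restriction to $X_0\times[0,\varepsilon)$, and that projection is not proper, so the direct image formula does not apply. Take $X=\RR^2_{(x,t)}$, $f=t$, $F=k_{\{t\geq x\}}$ and $0<x_0<\varepsilon$. The stalk of $\psi^{\RR}_t(F)$ at $x_0$ is $0$. But the stalk of $\rmR\pi_\ast\bigl((\rsect_{\{t>0\}}F)|_{X_0\times[0,\varepsilon)}\bigr)$ at $x_0$ is $\rsect(V\times(0,\varepsilon);F)\simeq k$ for small intervals $V\ni x_0$. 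So your second step gives no formula for $\CCyc(\psi^{\RR}_f(F))$. The ``homotopy between the two degenerations'' in your third step then has nothing to compare, and as stated it is not an argument.

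Your alternative, testing multiplicities at generic points of $\Lambda_0$ with a function $\varphi$, is the route the paper takes, but its key mechanisms are missing.
\begin{enumerate}
\item[(a)] You need a way to read off the multiplicity of $[\Theta_\alpha]$ in $\lim_{a\to+0}\CCyc(F_a)$ at all. The paper uses that intersection numbers with $[\Lambda^\varphi]$, where $\Lambda^\varphi=\rmd\varphi(W)$, are locally constant in the family and pass to the limit (Proposition~\ref{prop-intersection}, Example~\ref{ex-Lagintersect}). So this multiplicity equals $\#([\Lambda^\varphi]\cap\CCyc(F_a))$ for small $a>0$.
\item[(b)] In general $\Lambda^\varphi$ meets $\Lambda_a$ near $p_\alpha$ in several points (two, $q_\pm$, in Example~\ref{ex-realnearby}(i)), so there is no single ``nearby covector''. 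You must sum their contributions. By Kashiwara--Schapira's Theorems 9.5.3 and 9.5.6 this sum is $\chi\,\rsect(X_a;(F_a)_{\overline{B(r_1)}\cap\{-\varepsilon_0<\varphi\leq\varepsilon_0\}})$.
\item[(c)] Equating this with $m_\alpha=\chi\,\rsect_{\{\varphi\geq0\}}(\psi^{\RR}_f(F))_{q_\alpha}$ does not follow from $\rsect_{\{\varphi\geq0\}}$ commuting with $\rsect_{\{f>0\}}$. The real difficulty is commuting with $i_X^{-1}$ and moving from $X_0$ to $X_a$. The paper does this in two steps:
\begin{enumerate}
\item[(1)] It applies microlocal Morse theory on $X_0$. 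Here $\varphi|_{S_\alpha}$ has index $0$ and $\varphi^{-1}(s)$ is transversal to the other strata. This turns the stalk into $\rsect(X_0;\psi^{\RR}_f(F)_{\overline{B(r_1)}\cap\{-\varepsilon_0<\varphi\leq\varepsilon_0\}})$.
\item[(2)] Bertini--Sard makes $\{\tl{\rho}=r_1\}$ transversal to all strata. Then the non-characteristic deformation lemma in the $t$-direction, applied to the cut-off sheaf $\SG$, gives $\rsect(X_0;i^{-1}\SG)\simeq\rsect(X_a;\SG|_{X_a})$.
\end{enumerate}
\end{enumerate}
You also defer the existence of the limit. The paper proves it from the Whitney condition, which gives $\Lambda_a\to\Lambda_0$, together with the subanalyticity of $\bigsqcup_{a}\Lambda_a\times\{a\}$ (Lemma~\ref{ext-lem}).
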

For the proof of Theorem \ref{thm-limit}, we use some arguments similar 
to the ones in the proof of \cite[Theorem 4.2]{SV96}, but our 
proof is more sheaf theoretical in the sense that we study the 
deformations of objects in derived categories by using the 
non-characteristic deformation lemma in \cite[Proposition 2.7.2]{KS90}.
At many points, instead of the stratified Morse theory of Goresky-MacPherson \cite{GM88},
we also make use of the results of the microlocal sheaf theory in \cite{KS90} 
as much as possible. Then eventually, to our surprise,
we find that our proof of Theorem \ref{thm-limit} allows us to 
calculate not only the characteristic cycle but also the microlocal 
types of $\psi_f^{\RR}(F) \in \Db_{\RR-\mathrm{c}}(X_0)$
in many situations (see Example \ref{ex-realnearby} and 
Theorem \ref{thm-perverse} for the details).
As an immediate consequence of Theorem \ref{thm-limit}, 
we obtain also formulas for the characteristic cycles of 
specialization and microlocaization sheaves, which have never been 
stated clearly before.
See Section \ref{sec-realnearby} for the details.
\par 
With Theorem \ref{thm-limit} and its proof at hand, in Section \ref{sec-cc} 
we then proceed to obtain formulas for the characteristic cycles of 
complex nearby and vanishing cycle sheaves.
For this purpose, we first show in Lemma \ref{lem-nearby} that 
the complex nearby cycle sheaves (of complex constructible sheaves)
can be expressed by our ``real'' nearby cycle functor and apply 
Theorem \ref{thm-limit}.
Note that the resulting formula in Theorem \ref{thm-Gins} 
for the characteristic cycles of complex nearby cycle sheaves 
can be seen as a topological analogue of Ginsburg's one 
in \cite[Theorem 5.5]{Gin86}. 
Finally in Section \ref{sec-cc}, by various examples we will show that 
Theorem \ref{thm-limit} and its proof allow us to calculate 
the characteristic cycles of complex nearby and vanishing cycle 
sheaves explicitly (for their applications to 
singularity theory, see the forthcoming paper \cite{FT26}). 
Note that also in \cite{Mas25} Massey studied the same problem 
and obtained various formulas, similar to but different from ours.
It would be an interesting problem to study the relation between these two methods. 

\bigskip
\noindent{\bf Acknowledgement:}
The authors are grateful to Professor Pierre Schapira for
several discussions with him during the preparation of this paper.

\section{Preliminary notions and results}
In this section, we recall the results of Kashiwara-Schapira \cite[Chapter I\hspace{-1.2pt}X]{KS90} 
and Schmid-Vilonen \cite{SV96} and clarify the relations among them. 
In particular, we reformulate the results of \cite{SV96} in terms of 
the theory in \cite[Chapter I\hspace{-1.2pt}X]{KS90} and give some variants of them,  
which will be used in this paper. 
Let $k$ be a commutative field of characteristic zero.
In what follows, we assume that $k=\CC$ for the sake of simplicity.

\subsection{Borel-Moore homology cycles in the subanalytic setting}\label{subsec.5.1}
In this subsection, we briefly recall the theory of Borel-Moore homology groups 
and cycles in the subanalytic setting.
For the details, see Kashiwara-Schapira \cite[Section 9.2]{KS90}.
Let $X$ be a real analytic manifold of dimension $n$. 
For a non-negative integer $p$, let $\LCLS_p(X)$ denote the set of 
locally closed subanalytic subsets of $X$ of dimension $\leq p$.
Then for any $S\in\LCLS_p(X)$ we have 
\begin{equation}
H^j\omega_S\simeq H^j\rsect_S(\orsh_X[n])\vbar_S\simeq0 \quad (j<-p)
\end{equation}
(see e.g. \cite[Proposition 9.2.2 (ii)]{KS90}).
For the inclusion map $j_S\colon S\longhookrightarrow X$ we thus obtain isomorphisms
\begin{equation}
H_S^{n-p}(X;\orsh_X) \simeq H_S^{-p}(X;\omega_X) 
\simeq \Gamma(X;j_{S\ast}H^{-p}\omega_S).
\end{equation}
On the other hand, by the Poincar{\'e}-Verdier duality theorem, for $j\in \ZZ$ there
exist isomorphisms 
\begin{equation}
H_S^{n-j}(X;\orsh_X)\simeq H^{-j}(S;\omega_S)\simeq \bigl[H_{\mathrm{c}}^j(S;\CC_S)\bigr]^\ast
\end{equation}
and we call the dual vector space $\bigl[H_{\mathrm{c}}^j(S;\CC_S)\bigr]^\ast$ of 
$H_{\mathrm{c}}^j(S;\CC_S)$ the Borel-Moore homology group of $S$ of degree $j$ 
and denote it by $H_j^\BM(S;\CC)$.
Note that by \cite[Proposition 9.2.2(i)]{KS90} we have
\begin{equation}
H_j^\BM(S;\CC)=\bigl[H_{\mathrm{c}}^j(S;\CC_S)\bigr]^\ast\simeq0 \quad (j>p).
\end{equation}
For $S\in\LCLS_p(X)$ we define a closed subanalytic subset 
$\partial S\subset\overline{S}$ of the closure $\overline{S}$ of S by
\begin{equation}
\partial S\coloneq \overline{S}\setminus S.
\end{equation}
Then we obtain a distinguished triangle 
\begin{equation}
\omega_{\partial S}\longrightarrow\omega_{\overline{S}}\longrightarrow
\rmR j_{S\ast}\omega_S\overset{+1}{\longrightarrow}
\end{equation}
and hence the long exact sequence
\begin{equation}
0\longrightarrow H^{-p}\omega_{\overline{S}}\longrightarrow j_{S\ast}H^{-p}\omega_S
\longrightarrow H^{1-p}\omega_{\partial S}\longrightarrow\cdots
\end{equation}
associated to it.
In \cite[Section 9.2]{KS90} Kashiwara and Schapira used the morphism 
$j_{S\ast}H^{-p}\omega_S\longrightarrow H^{1-p}\omega_{\partial S}$ to construct the boundary
operator of subanalytic $p$-chains on $X$.
In order to clarify the meaning of their construction, assume that $S$ is closed and 
fix a subanalytic triangulation $\ST$ of $S$.
For $0\leq k\leq p$ let $S_k\subset S$ be the (disjoint) union of the interiors of the
simplices in $\ST$ of dimension $\leq k$ and set $S_k^\circ\coloneq S_k\setminus S_{k-1}$.
Namely $S_k$ is the $k$-skeleton of $S$.
Then $S_k$ is a closed subset of $S$ and we obtain a distinguished triangle
\begin{equation}\label{eq-dtomega}
\omega_{S_{k-1}}\longrightarrow\omega_{S_k}\longrightarrow\rmR(j_{S_k^\circ})_\ast
\omega_{S_k^\circ}\overset{+1}{\longrightarrow} 
\end{equation}
and the long exact sequence
\begin{equation}
0\longrightarrow H^{-k}\omega_{S_k}\longrightarrow
(j_{S_k^\circ})_\ast H^{-k}\omega_{S_k^\circ}\longrightarrow
H^{-(k-1)}\omega_{S_{k-1}}\longrightarrow\cdots
\end{equation}
associated to it, where $j_{S_k^\circ}\colon S_k^\circ\longhookrightarrow X$ is the 
inclusion map.
In particular, for $k=p$ we have the equality $S_p=S$ and by our assumption that $S$
is closed in $X$ we obtain an exact sequence 
\begin{equation}\label{eq-exBM}
0\longrightarrow H_p^\BM(S;\CC)\longrightarrow\Gamma(S_p^\circ;\orsh_{S_p^\circ})
\longrightarrow\Gamma(S_{p-1}^\circ;\orsh_{S_{p-1}^\circ}).
\end{equation}
Here we used the fact that the natural morphism 
\begin{equation}
\Gamma(X;H^{-(p-1)}\omega_{S_{p-1}})\longrightarrow
\Gamma(S_{p-1}^\circ;\orsh_{S_{p-1}^\circ})
\end{equation}
is injective.
For $0\leq k\leq p$ let $C_k^\infsupp(S, \ST ;\CC)$ be the $\CC$-vector space 
generated by the simplicial $k$-chains on $S$ of possibly 
non-compact support 
with respect to the triangulation $\ST$.
Then we obtain a complex 
\begin{equation}
\cdots\longrightarrow C_j^\infsupp(S, \ST ;\CC) \underset{\partial_j}{\longrightarrow}
C_{j-1}^\infsupp(S, \ST ;\CC)\underset{\partial_{j-1}}{\longrightarrow}
C_{j-2}^\infsupp(S, \ST ;\CC)\longrightarrow\cdots
\end{equation}
of $\CC$-vector spaces and set
\begin{equation}
H_k^\infsupp(S, \ST ;\CC)\coloneq \frac{\Ker\partial_k}{\Ima\partial_{k-1}} 
\quad (0\leq k \leq p).
\end{equation}
Thus the exact sequence \eqref{eq-exBM} implies that for the Borel-Moore homology
group 
$H_p^\BM(S;\CC)=\bigl[H_{\mathrm{c}}^p(S;\CC_S)\bigr]^\ast\simeq
H_S^{n-p}(X;\orsh_X)$ of $S$ of the highest degree $p$ there exists an isomorphism
\begin{equation}
H_p^\BM(S;\CC)\simeq H_p^\infsupp(S, \ST ;\CC).
\end{equation}
From this, we can easily show that the notion of subanalytic $p$-cycles in $S$ used 
in Schmid-Vilonen \cite[Section 3]{SV96} is equivalent to the one of 
Kashiwara-Schapira \cite[Section 9.2]{KS90}.
More generally, we have the following well-known result. 
Here we give a short proof to it for the reader's convenience. 

\begin{proposition}
For any $0\leq k\leq p$ there exists an isomorphism
\begin{equation}
H_k^\BM(S;\CC)\simeq H_k^\infsupp(S, \ST ;\CC).
\end{equation}
\end{proposition}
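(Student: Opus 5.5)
The plan is to compute $\rsect(X;\omega_S)$ through the skeletal filtration $S_0\subset S_1\subset\cdots\subset S_p=S$ already used above. Each stratum $S_k^\circ$ is a disjoint union of open $k$-simplices, so
\begin{equation*}
\omega_{S_k^\circ}\simeq\orsh_{S_k^\circ}[k].
\end{equation*}
Moreover, every open simplex $\sigma\simeq\RR^k$ satisfies $H^j(\sigma;\orsh_\sigma)\simeq\CC$ for $j=0$ and $0$ otherwise. Hence the Borel-Moore cohomology of $S_k^\circ$ is concentrated in a single degree:
\begin{equation*}
H^{-j}(X;\rmR(j_{S_k^\circ})_\ast\omega_{S_k^\circ})\simeq \Gamma(S_k^\circ;\orsh_{S_k^\circ})\simeq \prod_{\dim\sigma=k}\CC\cdot[\sigma]
\end{equation*}
for $j=k$, and $0$ otherwise. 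Choosing an orientation of each simplex identifies the right-hand side with $C_k^\infsupp(S,\ST;\CC)$. The product is the right object because the chains have possibly non-compact support. Here we keep the assumption that $S$ is closed, as in the preceding discussion.

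Next, apply $\rsect(X;\,\cdot\,)$ to the distinguished triangles \eqref{eq-dtomega} for $k=0,\dots,p$. This gives a filtered object with $\mathrm{gr}_k\simeq \rsect(S_k^\circ;\omega_{S_k^\circ})$, concentrated in degree $-k$. The associated spectral sequence $E_1^{-k,q}$ is then nonzero only on one line, so it degenerates at $E_2$. Its $E_1$-page is the complex
\begin{equation*}
\cdots\to C_k^\infsupp(S,\ST;\CC)\xrightarrow{d_1}C_{k-1}^\infsupp(S,\ST;\CC)\to\cdots .
\end{equation*}
Equivalently, as in the usual "cellular complex" argument, one checks by induction on $k$ that $H^{-j}(X;\omega_{S_k})$ vanishes for $j>k$. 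Using the long exact sequences, this identifies $H^{-k}(X;\omega_S)$ with $\Ker d_1/\Ima d_1$ in degree $k$. Since $H_k^\BM(S;\CC)\simeq H^{-k}(X;\omega_S)$ by Poincar\'e-Verdier duality, the claimed isomorphism follows once $d_1$ is identified with $\partial_k$.

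The main obstacle is precisely that identification. Here $d_1$ is the composite of the connecting morphism $(j_{S_k^\circ})_\ast H^{-k}\omega_{S_k^\circ}\to H^{-(k-1)}\omega_{S_{k-1}}$ and the restriction to $S_{k-1}^\circ$. I would show it equals the simplicial boundary (with signs) by a local computation. The map is local, so it suffices to work near a point $x$ in the interior of a $(k-1)$-simplex $\tau$, where $S_k$ looks like $\tau\times$ (a cone over the finitely many points corresponding to the $k$-simplices $\sigma\supset\tau$). After the reduction $\tau\simeq\RR^{k-1}$, via $\omega_{A\times B}\simeq\omega_A\boxtimes\omega_B$, this becomes the case $k=1$: the half-lines $[0,\infty)$ glued at $0$. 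There one computes directly from the triangle
\begin{equation*}
\omega_{\{0\}}\to\omega_{[0,\infty)}\to \rmR j_\ast\omega_{(0,\infty)}\xrightarrow{+1}
\end{equation*}
that the connecting map sends the orientation class of $(0,\infty)$ to $\pm$ the point class. The sign is $+$ or $-$ according to whether the orientation points toward or away from $0$. This matches the incidence numbers $[\sigma:\tau]$, giving $d_1=\partial_k$ up to a uniform sign convention, which does not affect homology.
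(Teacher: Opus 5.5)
Your proposal is correct and takes essentially the same route as the paper: the skeletal filtration $S_0\subset\cdots\subset S_p=S$, the concentration of $\rsect(S_k^\circ;\omega_{S_k^\circ})$ in the single degree $-k$, and the long exact sequences of \eqref{eq-dtomega}, which exhibit $H_k^\BM(S;\CC)$ as a cokernel of the connecting map into $\Ker$ of the next one. The paper runs this as a direct induction rather than a spectral sequence, and the one place you go further is the local computation identifying the connecting morphism with the simplicial boundary $\partial_k$ up to a sign depending only on $k$, which the paper leaves implicit by appealing to the Kashiwara--Schapira construction of the boundary operator.
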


\begin{proof}
First, note that for any $0\leq k\leq p$ we have a concentration 
\begin{equation}
H^j(X;\rmR(j_{S_k^\circ})_\ast\omega_{S_k^\circ})\simeq
H^{j+k}(S_k^\circ;\orsh_{S_k^\circ})\simeq0 \quad (j\neq-k).
\end{equation}
Then by a repeated use of the distinguished triangle \eqref{eq-dtomega} we can easily
show that for any $0\leq k\leq p$ we have an isomorphism
\begin{equation}
H_{k-1}^\BM(S;\CC)\simeq H^{-(k-1)}(X;\omega_S)\simeq H^{-(k-1)}(X;\omega_{S_k}).
\end{equation}
On the other hand, for $0\leq k\leq p$ there exists an exact sequence
\begin{equation}
0\longrightarrow\Gamma(S_k^\circ;\orsh_{S_k^\circ})\longrightarrow
H^{-(k-1)}(X;\omega_{S_{k-1}})\longrightarrow H^{-(k-1)}(X;\omega_{S_k})
\longrightarrow0.
\end{equation}
As the $(k-1)$-skeleton $S_{k-1}$ of $S$ contains only the simplices of 
dimension $\leq k-1$, we have isomorphisms 
\begin{equation}
H^{-(k-1)}(X;\omega_{S_{k-1}})\simeq H_{k-1}^\infsupp 
(S_{k-1}, \ST |_{S_{k-1}} ;\CC)
\simeq\Ker\partial_{k-1}.
\end{equation}
We thus obtain the assertion as follows:
\begin{equation}
H_{k-1}^\BM(S;\CC) \simeq 
\Coker\bigl[\Gamma(S_k^\circ;\orsh_{S_k^\circ})\longrightarrow\Ker\partial_{k-1}\bigr] 
\simeq H_{k-1}^\infsupp(S, \ST ;\CC).
\end{equation}
\end{proof}
If $S\subset X$ is a (purely) $p$-dimensional orientable subanalytic submanifold, then
we have isomorphisms
\begin{equation}
H_p^\BM(S;\CC)\simeq \Gamma(S;\orsh_S)\simeq\Gamma(S;\CC_S)
\end{equation}
and hence the top-dimensional Borel-Moore homology group 
$H_p^\BM(S;\CC)\simeq H_S^{n-p}(X;\orsh_X)$ of $S$ contains an element 
$[S]\in H_p^\BM(S;\CC)$ which corresponds to the one $1\in\Gamma(S;\CC_S)$.
We call it the fundamental class of $S$.
As an element of the dual vector space 
$\bigl[H_{\mathrm{c}}^p(S;\CC_S)\bigr]^\ast$ of $H_{\mathrm{c}}^p(S;\CC_S)$,
it corresponds to the $\CC$-linear morphism
\begin{equation}
\int_S\colon H_{\mathrm{c}}^p(S;\CC_S)\longrightarrow\CC
\end{equation}
defined by the integral of smooth $p$-forms of compact support over $S$ 
(with respect to the orientation of $S$ used to construct the isomorphism $\iorsh_S\simeq \CC_S$).
From now, let us consider the special case where $X$ (resp. $S\subset X$) is a smooth
complex algebraic variety of dimension $m$ 
(resp. a complex algebraic subset of $X$ of dimension $q$).
First, assume that $S$ is irreducible and let $S_\reg\subset S$ be the smooth
part of $S$.
Then $S_\reg$ is a complex manifold of dimension $q$ and hence orientable.
We thus obtain its fundamental class
\begin{equation}
[S_\reg]\in H_{2q}^\BM(S_\reg;\CC)\simeq H_{S_\reg}^{2m-2q}(X;\orsh_X). 
\end{equation}
Moreover, as the singular part $S_\sing\coloneq S\setminus S_\reg$ of $S$ is of (complex)
dimension $\leq q-1$, the restriction morphism
\begin{equation}
H_{2q}^\BM(S;\CC)\longrightarrow H_{2q}^\BM(S_\reg;\CC)
\end{equation}
of the top-dimensional Borel-Moore homology groups is an isomorphism.
This implies that 
there exists a unique element $[S]\in H_{2q}^\BM(S;\CC)$ which corresponds to
the one $[S_\reg]\in H_{2q}^\BM(S_\reg;\CC)$.
We call it the fundamental class of $S$.
In the general case, let $T_1,T_2,\dots,T_l$ be the $q$-dimensional 
irreducible components of $S$ and for each $1\leq i\leq l$ denote by 
$[T_i]\in H_{2q}^\BM(S;\CC)$ the image of the fundamental class of $T_i$ by the morphism
$H_{2q}^\BM(T_i;\CC)\longrightarrow H_{2q}^\BM(S;\CC)$.
Then it is well-known that the top-dimensional Borel-Moore homology group 
$H_{2q}^\BM(S;\CC)\simeq H_S^{2m-2q}(X;\orsh_X)$ of $S$ is a $\CC$-vector 
space of dimension $l$ and the elements $[T_1],[T_2],\dots,[T_l]\in H_{2q}^\BM(S;\CC)$
form a basis of it (see e.g. Chriss-Guisburg \cite[Proposition 2.6.14]{CG97} 
and Fulton \cite[Section B.3, Lemma 4]{Ful97}).
Now we return to the general case where $X$ is a real analytic manifold of dimension $n$
and $S\in \LCLS_p(X)$. 
Let $L$ be a local system on $X$ over the field $\CC$. 
Then the canonical morphism 
\begin{align}
&\rsect_S(\orsh_X)\otimes L\simeq \rhom_{\CC_X}(\CC_S,\orsh_X)\otimes L \\
&\longrightarrow \rhom_{\CC_X}(\CC_S,\orsh_X \otimes L)\simeq \rsect_S(\orsh_X\otimes L)
\end{align} 
is an isomorphism.
We define the twisted Borel-Moore homology group $H_p^\BM(S;L)$ of $S$ with coefficients 
in the local system $L$ by 
\begin{equation}
H_p^\BM(S;L) \coloneq H_S^{n-p}(X;\orsh_X\otimes L) 
\simeq H_S^{-p}(X;\omega_X\otimes L)\simeq\Gamma(S;H^{-p}\omega_S\otimes(L\vbar_S)).
\end{equation}
In particular, for a subanalytic submanifold $S\subset X$ of dimension $p$,
we then obtain an isomorphism
\begin{equation}
H_p^\BM(S;L)\simeq\Gamma(S;\orsh_S \otimes(L\vbar_S)).
\end{equation} 
Even if the submanifold $S\subset X$ is not orientable, at least in the case where
there exists a local system $L$ on $X$ such that $L\vbar_S\simeq \orsh_S$ and hence
\begin{equation}
\Gamma(S;\orsh_S\otimes(L\vbar_S))\simeq\Gamma(S;\CC_S)\simeq\CC
\end{equation}
we can define a ``fundamental class" $[S]$ of $S$ to be the element of $H_p^\BM(S;L)$ 
which corresponds to the one $1\in\CC\simeq\Gamma(S;\iorsh_S\otimes(L\vbar_S))$.
As we see in the next example, it seems that this observation was a key in the 
construction of the theory of characteristic cycles in 
Kashiwara-Schapira \cite[Section 9.3]{KS90}.

\begin{example}
Let $X$ be a real analytic manifold of dimension $n$ and $Y\subset X$ a submanifold
of dimension $m$ $(\leq n)$ and set $S\coloneq T_Y^\ast X\subset T^\ast X$.
Note that the cotangent bundle $T^\ast X$ of $X$ is orientable and hence 
$\orsh_{T^\ast X}\simeq \CC_{T^\ast X}$.
This implies that for the inclusion map $k\colon S=T_Y^\ast X\longhookrightarrow T^\ast X$
we have isomorphisms
\begin{equation}
\omega_S\simeq k^!\CC_{T^\ast X}[2n] 
\simeq\rsect_{T_Y^\ast X}(\CC_{T^\ast X}[2n])|_{T_Y^\ast X}.
\end{equation}
Let $i\colon S=T_Y^\ast X\longhookrightarrow Y\ftimes{X} T^\ast X$ and 
$j\colon Y\ftimes{X} T^\ast X\longhookrightarrow T^\ast X$ be the inclusion maps so
that we have $k=j\circ i$.
Let $\pi_X\colon T^\ast X\longrightarrow X$ be the canonical projection and consider 
the following Cartesian square 
\begin{equation}
\vcenter{
\xymatrix@M=5pt{
Y\ftimes{X} T^\ast X \ar[d]^-{\alpha} \ar@{^{(}->}[r]^-{j} \ar@{}[dr]|\square & 
T^\ast X \ar[d]^-{\pi_X} & \\
Y \ar@{^{(}->}[r] & X. &  
}}
\end{equation}
As the morphism $\pi_X$ is a submersion, we obtain isomorphisms
\begin{align}
j^!\CC_{T^\ast X}[2n] &\simeq 
\rsect_{Y\ftimes{X}T^\ast X}(\CC_{T^\ast X}[2n])|_{Y\ftimes{X}T^\ast X} \\
&\simeq \alpha^{-1}\Bigl(\rsect_Y(\CC_X[2n])\vbar_Y\Bigr) 
\simeq \alpha^{-1}\orsh_{Y/X}[n+m].
\end{align}
Let $\rho\colon Y\ftimes{X}T^\ast X\longtwoheadrightarrow T^\ast Y$ be the morphism
induced by the inclusion map $Y\longhookrightarrow X$ and 
$\iota_Y\colon Y\simeq T_Y^\ast Y\longhookrightarrow T^\ast Y$ the zero section embedding.
Then we obtain a Cartesian square
\begin{equation}
\vcenter{
\xymatrix@M=5pt{
S=T_Y^\ast X \ar[d]^-{\beta} \ar@{^{(}->}[r]^-{i} \ar@{}[dr]|\square & 
Y\ftimes{X} T^\ast X \ar[d]^-{\rho} & \\
Y \ar@{^{(}->}[r]^-{\iota_Y} & T^\ast Y &  
}}
\end{equation}
and can use it similarly to show isomorphisms 
\begin{equation}
i^!\alpha^{-1}\orsh_{Y/X}[n+m] \simeq 
\beta^{-1}\Bigl\{\bigl(\rsect_Y(\CC_{T^\ast Y})\vbar_Y\bigr)\otimes\orsh_{Y/X}[n+m]\Bigr\} 
\simeq \beta^{-1}(\orsh_X^{\otimes{-1}}\vbar_Y)[n].
\end{equation}
For the submanifold $S=T_Y^\ast X\subset T^\ast X$ we thus obtain isomorphisms
\begin{equation}
\orsh_S \simeq \beta^{-1}(\orsh_X^{\otimes{-1}} \vbar_Y) 
\simeq ( \pi_X^{-1}\iorsh_X) \vbar_S
\end{equation}
and 
\begin{equation}
H_n^\BM(S;\CC)\simeq \Gamma\bigl(S; ( \pi_X^{-1}\iorsh_X) \vbar_S \bigr).
\end{equation}
This implies that if $\Gamma(S; ( \pi_X^{-1}\orsh_X) \vbar_S )\simeq0$ we can 
not define a non-trivial class in $H_n^\BM(S;\CC)$.
Nevertheless, for the local system $L=\pi_X^{-1}\iorsh_X\simeq 
\orsh_{T^*X /X}$
on $T^\ast X$ we have 
\begin{equation}
H_n^\BM(S;L) \simeq \Gamma(S;\CC_S)\simeq \CC
\end{equation}
and hence can define a fundamental class $[S]$ of $S$ in 
the twisted Borel-Moore homology group $H_n^\BM(S;L)$.
This would be one of the reasons why Kashiwara and Schapira used $\pi_X^{-1}\omega_X$
instead of $\omega_{T^\ast X}$ to define their sheaf of Lagrangian cycles in 
\cite[Definition 9.3.1]{KS90}.
\end{example}

\subsection{Limits of Borel-Moore homology cycles and their properties}\label{subsec-limit} 
As in Schmid-Vilonen \cite[Section 3]{SV96}, for some $b>0$ 
we set $I\coloneq (0,b)$ and $J\coloneq [0,b)$.
Let $M$ be a smooth manifold and set $M_J\coloneq M\times J$, $M_I\coloneq M\times I$ and
$M_{\{0\}}\coloneq M\times \{0\}$.
Let $A_J\subset M\times J$ be a closed subset and set 
$A_I\coloneq A_J\cap M_I\subset M_I$ and 
$A_{\{0\}}\coloneq A_J\cap M_{\{0\}}\subset M_{\{0\}}\simeq M$.
Moreover, for a local system $L_J$ on $M_J$ over the field $\CC$ we set 
\begin{equation}
L_I\coloneq L_J\vbar_{M_I}, \quad 
L_{\{0\}}\coloneq L_J\vbar_{M_{\{0\}}}. 
\end{equation}
Then for the inclusion map $j_{M_I}\colon M_I\longhookrightarrow M_J$
there exists an isomorphism
\begin{equation}
L_J\simto \rmR(j_{M_I})_\ast L_I
\end{equation}
and hence applying the functor $\rsect_{A_J}(M_J;\cdot)$ to it we obtain 
an isomorphism
\begin{equation}
\rsect_{A_J}(M_J;L_J)\simto \rsect_{A_I}(M_I;L_I).
\end{equation}
Composing its inverse with the natural morphism 
\begin{equation}
\rsect_{A_J}(M_J;L_J)\longrightarrow \rsect_{A_{\{0\}}}(M;L_{\{0\}}),
\end{equation}
we obtain the morphism 
\begin{equation}
\Phi\colon \rsect_{A_I}(M_I;L_I)\longrightarrow 
\rsect_{A_{\{0\}}}(M;L_{\{0\}})
\end{equation}
of Schmid-Vilonen \cite[(3.14)]{SV96} in a slightly different setting. 
In \cite{SV96} Schmid-Vilonen used it to construct the limits $C_{\{ 0 \}}$ of 
families $C_I$ of Borel-Moore homology cycles in $M$ 
parameterized by $t \in I=(0,b)$.
Before explaining their construction, we shall give some variants of the morphism
$\Phi$ and explain relations among them (see Lemmas \ref{lem-pushPhi}, \ref{lem-pullPhi} 
and \ref{new-lemmes} below). 
Let $N$ be a smooth manifold, for which we use 
the same notations $N_I, N_J$ and $N_{\{0\}}$ as for $M$.
First, let $f\colon M\longrightarrow N$ be a proper morphism of smooth manifolds.
Then for a local system $L_J$ on $M_J$ and a closed subset $A_J \subset M_J$, 
there exists a natural morphism 
\begin{equation}
\phi_\ast(f)\colon \rsect_{A_{\{ 0 \}}}(M;L_{\{ 0 \}})\longrightarrow
\rsect_{f(A_{\{ 0 \}} )}(N;\rmR f_\ast L_{\{ 0 \}}).
\end{equation}
Similarly, for the morphism $f_I\coloneq f\times \id_I\colon M_I\longrightarrow N_I$ 
we can define a natural morphism
\begin{equation}
\phi_\ast(f_I)\colon \rsect_{A_I}(M_I;L_I)\longrightarrow 
\rsect_{f(A_I)}(N_I;\rmR f_{I\ast} L_I).
\end{equation}

\begin{lemma}\label{lem-pushPhi}
In the situation as above, we set $B_I\coloneq f_I(A_I)\subset N_I$ and 
$B_{\{0\}}\coloneq f(A_{\{0\}})\subset N$.
Then there exists a commutative diagram
\begin{equation}
\begin{tikzcd}
\rsect_{A_I}(M_I;L_I)
\arrow[d,"\phi_\ast(f_I)"] \arrow[r,"\Phi"] & 
\rsect_{A_{\{0\}}}(M;L_{\{0\}}) \arrow[d,"\phi_\ast(f)"] & \\
\rsect_{B_I}(N_I;\rmR f_{I\ast} L_I)
\arrow[r," "] & 
\rsect_{B_{\{0\}}}(N;\rmR f_\ast L_{\{0\}}). &  
\end{tikzcd}
\end{equation}   
\end{lemma}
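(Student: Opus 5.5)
The bottom horizontal arrow is the analogue of $\Phi$ for $N$. It uses the closed subset $B_J\coloneq f_J(A_J)\subset N_J$, where $f_J\coloneq f\times\id_J$, and the sheaf $\rmR f_{J\ast}L_J$ in place of a local system. The first step is to check that this makes sense. Since $f$ is proper, so are $f_J$ and $f_I$, and hence $B_J$ is closed in $N_J$. We also have $B_J\cap N_I=B_I$ and $B_J\cap N_{\{0\}}=B_{\{0\}}$, because $f_J$ preserves the $J$-coordinate. Proper base change gives $(\rmR f_{J\ast}L_J)|_{N_I}\simeq\rmR f_{I\ast}L_I$ and $(\rmR f_{J\ast}L_J)|_{N_{\{0\}}}\simeq\rmR f_\ast L_{\{0\}}$.

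Next, the restriction isomorphism $\rsect_{B_J}(N_J;\rmR f_{J\ast}L_J)\simto\rsect_{B_I}(N_I;\rmR f_{I\ast}L_I)$ holds. I would deduce it from the isomorphism $L_J\simto\rmR(j_{M_I})_\ast L_I$ established above by applying $\rmR f_{J\ast}$ and using $f_J\circ j_{M_I}=j_{N_I}\circ f_I$, which gives
\[
\rmR f_{J\ast}L_J\simto\rmR(j_{N_I})_\ast\rmR f_{I\ast}L_I .
\]
Taking $\rsect_{B_J}(N_J;\cdot)$ of this isomorphism gives the claim. The bottom arrow is then the inverse of this isomorphism followed by restriction to $N_{\{0\}}$, exactly as in the construction of $\Phi$.

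With this in place, the square is enlarged into a diagram with three columns, indexed by $I$, $J$ and $\{0\}$. The vertical maps are $\phi_\ast(f_I)$, $\phi_\ast(f_J)$ and $\phi_\ast(f)$. Each is the composition of $\rsect_{A}(M;L)\to\rsect_{f^{-1}f(A)}(M;L)$ (enlarging supports) with the identification $\rsect_{f^{-1}(B)}(M;L)\simeq\rsect_B(N;\rmR f_\ast L)$ (adjunction $\rsect_B\circ\rmR f_\ast\simeq\rmR f_\ast\circ\rsect_{f^{-1}(B)}$). The horizontal maps are the restrictions from $J$ to $I$ and from $J$ to $\{0\}$. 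Because $\Phi$ and the bottom arrow are defined as inverse-of-restriction followed by restriction, commutativity of the original square reduces to the commutativity of the two small squares. Since the $J\to I$ restrictions are isomorphisms, the left square may be inverted freely.

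The two small squares are the naturality of $\phi_\ast$ with respect to restriction to the open subset $N_I\subset N_J$ and to the closed subset $N_{\{0\}}\subset N_J$, respectively. The open case is immediate, since all constructions commute with restriction to opens and $f_J^{-1}(N_I)=M_I$. The closed case is the main point. It requires compatibility of the adjunction isomorphism for $\rmR f_\ast$ with the base change $i_{N_{\{0\}}}^{-1}\rmR f_{J\ast}\simeq\rmR f_\ast\, i_{M_{\{0\}}}^{-1}$ for the proper map $f_J$, together with compatibility of the support-change maps. I would verify it at the level of sheaves before taking global sections. The restriction $\rsect_{A_J}(L_J)\to i_\ast i^{-1}\rsect_{A_J}(L_J)\to i_\ast\rsect_{A_{\{0\}}}(L_{\{0\}})$ is natural, and by proper base change $\rmR f_{J\ast}$ commutes with $i_\ast i^{-1}$. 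The resulting identities are standard functoriality of the base change morphisms, as in \cite[Chapter II]{KS90}. I expect this bookkeeping of canonical morphisms to be the only real obstacle. The geometry itself, namely that $f_J$ preserves the fibres over $J$, is trivial.
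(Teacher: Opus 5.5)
Your proposal is correct and follows essentially the paper's proof: both pass through the $J$-level groups $\rsect_{A_J}(M_J;L_J)$ and $\rsect_{B_J}(N_J;\rmR f_{J\ast}L_J)$ and split the square into a $J\to I$ square and a $J\to\{0\}$ square. The paper obtains both small squares directly from naturality of the transformation $\rsect_{A_J}(M_J;\cdot)\to\rsect_{B_J}(N_J;\rmR f_{J\ast}(\cdot))$, applied to the sheaf morphisms $L_J\to\rsect_{M_I}L_J$ and $L_J\to(L_J)_{M_{\{0\}}}$ on $M_J$, which spares the proper base change bookkeeping you anticipate for the closed restriction.
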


\begin{proof}
We set $f_J\coloneq f\times \id_J\colon M_J\longrightarrow N_J$ and $B_J\coloneq f_J(A_J)$.
Then by applying the natural transformation 
$\rsect_{A_J}(M_J;\cdot)\longrightarrow\rsect_{B_J}(N_J;\rmR f_{J\ast}(\cdot))$ to
the morphism $L_J \longrightarrow \rsect_{M_I}L_J$,
we obtain a commutative diagram
\begin{equation}\label{eq-pushJI}
\begin{tikzcd}
\rsect_{A_J}(M_J;L_J) 
\arrow[d," "] \arrow[r,"\sim"] & 
\rsect_{A_I}(M_I;L_I)
\arrow[d,"\phi_\ast(f_I)"] & \\
\rsect_{B_J}(N_J;\rmR f_{J\ast} L_J) \arrow[r,"\sim"] & 
\rsect_{B_I}(N_I;\rmR f_{I\ast} L_I). &  
\end{tikzcd}
\end{equation}   
In the same way, we obtain the following commutative diagram by applying 
the natural transformation $\rsect_{A_J}(M_J;\cdot)\longrightarrow
\rsect_{B_J}(N_J;\rmR f_{J\ast}(\cdot))$ to
the morphism $L_J \longrightarrow (L_J)_{M_{\{ 0 \}}}$:
\begin{equation}\label{eq-pushJ0}
\begin{tikzcd}
\rsect_{A_J}(M_J;L_J) 
\arrow[d," "] \arrow[r] & 
\rsect_{A_{\{0\}}}(M;L_{\{0\}})
\arrow[d,"\phi_\ast(f)"] & \\
\rsect_{B_J}(N_J;\rmR f_{J\ast} L_J) \arrow[r] & 
\rsect_{B_{\{0\}}}(N;\rmR f_\ast L_{\{0\}}). &  
\end{tikzcd}
\end{equation}   
By \eqref{eq-pushJI} and \eqref{eq-pushJ0} we obtain the assertion.
\end{proof}

Next, let $g\colon N\longrightarrow M$ be a morphism of smooth manifolds.
Then for a local system $L_J$ on $M_J$ and a 
closed subset $A_J \subset M_J$, there exists a natural
morphism
\begin{equation}
\phi^\ast(g)\colon \rsect_{A_{\{0\}}}(M;L_{\{0\}})
\longrightarrow\rsect_{g^{-1}(A_{\{0\}})}(N;f^{-1} L_{\{0\}}).
\end{equation}
Similarly, for the morphism $g_I\coloneq g\times\id_I\colon N_I\longrightarrow M_I$ 
we can define a natural morphism
\begin{equation}
\phi^\ast(g_I)\colon \rsect_{A_I}(M_I;L_I)\longrightarrow
\rsect_{g_I^{-1}(A_I)}(N_I;g_I^{-1}L_I).
\end{equation} 

\begin{lemma}\label{lem-pullPhi}
In the situation as above, we set $B_I\coloneq g_I^{-1}(A_I)\subset 
N_I$ and $B_{\{0\}}\coloneq g^{-1}(A_{\{0\}})\subset N$.
Then there exists a commutative diagram
\begin{equation}
\begin{tikzcd}
\rsect_{A_I}(M_I;L_I)
\arrow[d,"\phi^\ast(g_I)"] \arrow[r,"\Phi"] & 
\rsect_{A_{\{0\}}}(M;L_{\{0\}}) \arrow[d,"\phi^\ast(g)"] & \\
\rsect_{B_I}(N_I;g_I^{-1} L_I)
\arrow[r," "] & 
\rsect_{B_{\{0\}}}(N;g^{-1} L_{\{0\}}). &  
\end{tikzcd}
\end{equation}   
\end{lemma}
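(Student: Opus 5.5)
The plan is to copy the proof of Lemma \ref{lem-pushPhi}, replacing the pushforward transformation by the pullback one. Set $g_J\coloneq g\times\id_J\colon N_J\longrightarrow M_J$ and $B_J\coloneq g_J^{-1}(A_J)$. Then $B_J$ is closed in $N_J$, and we have
\begin{equation}
B_J\cap N_I=B_I, \qquad B_J\cap N_{\{0\}}=B_{\{0\}}.
\end{equation}
The local system $g_J^{-1}L_J$ on $N_J$ restricts to $g_I^{-1}L_I$ on $N_I$ and to $g^{-1}L_{\{0\}}$ on $N_{\{0\}}\simeq N$. So the construction of $\Phi$ applies on $N$ to the data $(B_J, g_J^{-1}L_J)$. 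The bottom horizontal arrow is this map. It is well defined because $g_J^{-1}L_J$ is a local system, so $g_J^{-1}L_J\simto\rmR(j_{N_I})_\ast g_I^{-1}L_I$ and hence
\begin{equation}
\rsect_{B_J}(N_J;g_J^{-1}L_J)\simto\rsect_{B_I}(N_I;g_I^{-1}L_I).
\end{equation}

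For any closed $A\subset M$ and sheaf $G$, there is a natural transformation $\rsect_{A}(M;G)\longrightarrow\rsect_{g^{-1}(A)}(N;g^{-1}G)$. It comes from the adjunction morphism $G\longrightarrow\rmR g_\ast g^{-1}G$ together with the canonical morphism $g^{-1}\rsect_A G\longrightarrow\rsect_{g^{-1}A}g^{-1}G$. The same holds for $g_J$, and the transformation is compatible with restriction to open and to closed subsets.

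\textbf{Left square.} Apply this transformation for $g_J$ to the morphism $L_J\longrightarrow\rsect_{M_I}L_J\simeq\rmR(j_{M_I})_\ast L_I$. This gives a commutative square whose top row is $\rsect_{A_J}(M_J;L_J)\simto\rsect_{A_I}(M_I;L_I)$ and whose bottom row is the corresponding isomorphism on $N$. Its right vertical arrow is $\phi^\ast(g_I)$. To identify it, one checks that restricting to the open subset $N_I=g_J^{-1}(M_I)$ commutes with pulling back along $g_J$; this holds because $g_I$ is the restriction of $g_J$ over $M_I$.

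\textbf{Right square.} Apply the transformation to $L_J\longrightarrow(L_J)_{M_{\{0\}}}$. This gives a commutative square whose top arrow is $\rsect_{A_J}(M_J;L_J)\longrightarrow\rsect_{A_{\{0\}}}(M;L_{\{0\}})$, whose bottom arrow is the analogous map on $N$, and whose right vertical arrow is $\phi^\ast(g)$. This uses the base change $g_J^{-1}\bigl((L_J)_{M_{\{0\}}}\bigr)\simeq(g_J^{-1}L_J)_{N_{\{0\}}}$, which holds since $N_{\{0\}}=g_J^{-1}(M_{\{0\}})$.

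Gluing the two squares along their common left column $\rsect_{A_J}(M_J;L_J)\to\rsect_{B_J}(N_J;g_J^{-1}L_J)$, and inverting the horizontal isomorphisms of the first square, gives the diagram of the lemma.

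The main obstacle is this last bookkeeping step. One must check that the pullback transformation is natural in the sheaf argument and compatible with the open restriction and the closed restriction just described. This follows from the naturality of $\rsect_A\to\rmR g_\ast\rsect_{g^{-1}A}g^{-1}$ and the standard base change morphisms in \cite[Chapter I\hspace{-1.2pt}I]{KS90}. No properness is needed, since only inverse images are used.
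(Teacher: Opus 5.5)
Your proposal is correct and is essentially the paper's proof. The paper likewise sets $g_J = g\times\id_J$ and $B_J = g_J^{-1}(A_J)$, and deduces the lemma from the two-square diagram through $\rsect_{A_J}(M_J;L_J)\to\rsect_{B_J}(N_J;g_J^{-1}L_J)$ with isomorphic left horizontal arrows, exactly as in Lemma~\ref{lem-pushPhi}. Your base-change remarks for the open inclusion $N_I\hookrightarrow N_J$ and the closed inclusion $N_{\{0\}}\hookrightarrow N_J$ simply spell out checks the paper leaves implicit.
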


\begin{proof}
We set $g_J\coloneq g\times \id_J\colon N_J\longrightarrow M_J$ and $B_J\coloneq g_J^{-1}(A_J)$.
As in the proof of 
the previous lemma, the assertion follows from the following commutative diagram:
\begin{equation}
\begin{tikzcd}
\rsect_{A_I}(M_I;L_I) \arrow[d,"\phi^\ast(g_I)"] &
\rsect_{A_J}(M_J;L_J) \arrow[d," "] \arrow[l,"\sim"'] \arrow[r] & 
\rsect_{A_{\{0\}}}(M;L_{\{0\}}) \arrow[d,"\phi^\ast(g)"] & \\
\rsect_{B_I}(N_I;g_I^{-1}L_I) &
\rsect_{B_J}(N_J;g_J^{-1} L_J) \arrow[l,"\sim"'] \arrow[r] & 
\rsect_{B_{\{0\}}}(N;g^{-1}L_{\{0\}}). &  
\end{tikzcd}
\end{equation}   
\end{proof}

We set $M_{J^2}\coloneq M\times J^2$, $M_{I^2}\coloneq M\times I^2$, 
$M_{\{0\}\times I}\coloneq M\times \{0\}\times I$, 
$M_{I\times\{0\}}\coloneq M\times I\times \{0\}$ and 
$M_{\{0\}^2}\coloneq M\times\{0\}\times\{0\}$.
Let $A_{J^2}\subset M\times J^2$ be a closed subset and set 
\begin{align}
A_{I^2}\coloneq A_{J^2}\cap M_{I^2} &\quad \subset M_{I^2}, \\
A_{\{0\}\times I}\coloneq A_{J^2}\cap M_{\{0\}\times I} &\quad 
\subset M_{\{0\}\times I}\simeq M_I = M\times I, \\
A_{I\times \{0\}}\coloneq A_{J^2}\cap M_{I\times\{0\}} &\quad 
\subset M_{I\times\{0\}}\simeq M_I = M\times I, \\
A_{\{0\}^2}\coloneq A_{J^2}\cap M_{\{0\}^2}  &\quad \subset M_{\{0\}^2}\simeq M.
\end{align}
Moreover, for a local system $L_{J^2}$ on $A_{J^2}$ let 
$L_{I^2}$, $L_{\{0\}\times I}$, $L_{I\times\{0\}}$ and $L_{\{0\}^2}$ be its 
restrictions to $M_{I^2}$, $M_{\{0\}\times I}$, $M_{I\times\{0\}}$
and $M_{\{0\}^2}$, respectively.
Then as in the construction of the morphism $\Phi$, we obtain morphisms
\begin{empheq}[left=\empheqlbrace]{align*}
\Phi_1&\colon \rsect_{A_{I\times\{0\}}}(M_I; L_{I\times\{0\}})
\longrightarrow \rsect_{A_{\{0\}^2}}(M; L_{\{0\}^2}), \\
\Phi_2&\colon \rsect_{A_{\{0\}\times I}}(M_I; L_{\{0\}\times I})
\longrightarrow \rsect_{A_{\{0\}^2}}(M; L_{\{0\}^2})\,
\end{empheq}
and 
\begin{empheq}[left=\empheqlbrace]{align*}
\Phi_1^\prime&\colon \rsect_{A_{I^2}}(M_{I^2};L_{I^2})\longrightarrow
\rsect_{A_{\{0\}\times I}}(M_I; L_{\{0\}\times I}), \\
\Phi_2^\prime&\colon \rsect_{A_{I^2}}(M_{I^2};L_{I^2})\longrightarrow
\rsect_{A_{I\times\{0\}}}(M_I; L_{I\times\{0\}}).
\end{empheq}
   
\begin{lemma}\label{new-lemmes} 
There exists a commutative diagram
\begin{equation}
\begin{tikzcd}
\rsect_{A_{I^2}}(M_{I^2}; L_{I^2}) \ar[d,"\Phi_1^\prime"] \ar[r,"\Phi_2^\prime"] & 
\rsect_{A_{I\times\{0\}}}(M_I; L_{I\times\{0\}}) \ar[d,"\Phi_1"] & \\
\rsect_{A_{\{0\}\times I}}(M_I; L_{\{0\}\times I}) \ar[r,"\Phi_2"] & 
\rsect_{A_{\{0\}^2}}(M; L_{\{0\}^2}). &  
\end{tikzcd}
\end{equation}
\end{lemma}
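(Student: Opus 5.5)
The plan is to factor all four morphisms through the single object $\rsect_{A_{J^2}}(M_{J^2};L_{J^2})$ and compare the two resulting composites on it. This follows the pattern of the proofs of Lemmas \ref{lem-pushPhi} and \ref{lem-pullPhi}.

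First I record isomorphisms coming from local systems, exactly as in the construction of $\Phi$. For the inclusion $M_{I\times J}\coloneq M\times I\times J\longhookrightarrow M_{J^2}$, the local system $L_{J^2}$ is isomorphic to the derived direct image of its restriction, since $J=[0,b)$ is a half-open interval. The same holds for $M_{J\times I}\longhookrightarrow M_{J^2}$ and for $M_{I^2}\longhookrightarrow M_{J^2}$. Applying $\rsect_{A_{J^2}}(M_{J^2};\cdot)$ then gives isomorphisms
\begin{equation*}
\rsect_{A_{J^2}}(M_{J^2};L_{J^2})\simto \rsect_{A_{I\times J}}(M_{I\times J};L_{I\times J})\simto \rsect_{A_{I^2}}(M_{I^2};L_{I^2}),
\end{equation*}
and similarly through $M_{J\times I}$. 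Both composites are the single restriction $\rsect_{A_{J^2}}(M_{J^2};\cdot)\to\rsect_{A_{I^2}}(M_{I^2};\cdot)$, because restrictions compose.

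Next I identify $\Phi_2^\prime$. It is $\Phi$ applied in the second $J$-variable with $M\times I$ as base, so it is the inverse of $\rsect_{A_{I\times J}}(M_{I\times J};L)\simto\rsect_{A_{I^2}}(M_{I^2};L)$ followed by restriction to $M_{I\times\{0\}}$. Precomposing with the isomorphism from $M_{J^2}$, the map $\Phi_2^\prime$ becomes the restriction
\begin{equation*}
\rsect_{A_{J^2}}(M_{J^2};L_{J^2})\to\rsect_{A_{J\times\{0\}}}(M_{J};L_{J\times\{0\}})
\end{equation*}
followed by the isomorphism to the $I$-part. Here I use that restriction to $M_{J\times\{0\}}$ commutes with restriction from $J$ to $I$ in the first variable. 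Then $\Phi_1$ is the inverse of the isomorphism $\rsect_{A_{J\times\{0\}}}\simto\rsect_{A_{I\times\{0\}}}$ followed by restriction to $M_{\{0\}^2}$. Hence, after identifying with $M_{J^2}$, the composite $\Phi_1\circ\Phi_2^\prime$ equals the restriction map $\rsect_{A_{J^2}}(M_{J^2};L_{J^2})\to \rsect_{A_{\{0\}^2}}(M;L_{\{0\}^2})$. By the symmetric argument through $M_{J\times I}$, the composite $\Phi_2\circ\Phi_1^\prime$ is the same restriction map. Since $\rsect_{A_{J^2}}(M_{J^2};L_{J^2})\to\rsect_{A_{I^2}}(M_{I^2};L_{I^2})$ is an isomorphism, the square commutes.

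The main obstacle is checking that the intermediate squares really commute. Concretely, the restriction morphism $\rsect_{A_{I\times J}}(M_{I\times J};\cdot)\to \rsect_{A_{I\times\{0\}}}(M_{I};\cdot)$, composed with the isomorphisms from $M_{J\times J}$, must agree with restriction along $M_{J\times\{0\}}$ followed by the isomorphism to the $I$-part. Here I would write everything as $\rsect_{A_{J^2}}(M_{J^2};\cdot)$ applied to the adjunction morphisms $L_{J^2}\to \rmR j_\ast j^{-1}L_{J^2}$ and $L_{J^2}\to (L_{J^2})_{Z}$ for the relevant inclusions $j$ and closed subsets $Z$. Commutativity then reduces to naturality of these units, namely that $\rmR j_\ast j^{-1}$ commutes with $(\cdot)_Z$ for $Z=M_{J\times\{0\}}$ transversal to the $I$-direction, which holds by base change for the Cartesian square $M_{I\times\{0\}}=M_{I\times J}\cap M_{J\times\{0\}}$. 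This base change is valid for local systems, since $L_{J^2}$ is locally constant along the product structure.
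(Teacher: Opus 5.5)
Your proposal is correct and follows essentially the same route as the paper. The paper also passes through $\rsect_{A_{J^2}}(M_{J^2};L_{J^2})$. It uses the diagram of sheaves $\SL[B]=\rmR(j_B)_\ast(L\vbar_B)$ to show that $\Phi_2\circ\Phi_1^\prime\circ\Psi$ and $\Phi_1\circ\Phi_2^\prime\circ\Psi$ both equal the restriction $\Theta$ to $M_{\{0\}^2}$, and it concludes because $\Psi$ is an isomorphism.

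One minor remark: you do not need base change for the intermediate squares. Both composites are the adjunction unit for the same inclusion, since $M_{I\times\{0\}}\hookrightarrow M_{I\times J}\hookrightarrow M_{J^2}$ and $M_{I\times\{0\}}\hookrightarrow M_{J\times\{0\}}\hookrightarrow M_{J^2}$ have the same composite.
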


\begin{proof}
For a locally closed subset $B\subset M_{J^2}$ of $M_{J^2}$ and its inclusion
map $j_B\colon B\longhookrightarrow M_{J^2}$ we set 
\begin{equation}
\SL[B]\coloneq \rmR(j_B)_\ast (L\vbar_B) \quad \in\BDC(\CC_{M_{J^2}}).
\end{equation}
Then we obtain a commutative diagram
\begin{equation}
\begin{tikzcd}
\SL[M_{I^2}] & & & \\
\SL[M_{J\times I}] \ar[u,"\sim" sloped] \ar[d] & 
\SL[M_{J^2}] \ar[l,"\sim"'] \ar[ul,"\sim" sloped] \ar[d] & & \\
\SL[M_{\{0\}\times I}] &
\SL[M_{\{0\}\times J}] \ar[l,"\sim"'] \ar[r] & \SL[M_{\{0\}^2}]  &   
\end{tikzcd}
\end{equation}
where $M_{J\times I}$ and $M_{\{0\}\times J}$ stand for $M\times J\times I$ 
and $M\times \{0\}\times J$, respectively. 
Applying the functor $\rsect_{A_{J^2}}(M_{J^2};\cdot)$ to it, we see that for 
the isomorphism
\begin{equation}
\Psi\colon \rsect_{A_{J^2}}(M_{J^2} ; L_{J^2}) \simto 
\rsect_{A_{I^2}}(M_{I^2}; L_{I^2})
\end{equation}
induced by the one $\SL[M_{J^2}]\simto\SL[M_{I^2}]$ and the natural 
morphism
\begin{equation}
\Theta\colon \rsect_{A_{J^2}}(M_{J^2}; L_{J^2})
\longrightarrow \rsect_{A_{\{0\}^2}}(M; L_{\{0\}^2})
\end{equation}
we have $\Phi_2\circ\Phi_1^\prime\circ\Psi=\Theta$.
Similarly, we can show that $\Phi_1\circ\Phi_2^\prime\circ\Psi=\Theta$.
As $\Phi$ is an isomorphism, we thus obtain 
$\Phi_1\circ\Phi_2^\prime=\Phi_2 \circ \Phi_1^\prime$ as desired.
\end{proof}

Now assume that $M$ is a real analytic manifold of dimension $n$ and 
$A_J\subset M_J=M\times J$ is a closed subanalytic subset of dimension $\leq p+1$ such that
$\overline{A_I}=A_J$ and 
for any $t\in J=[0,b)$ the dimension of the subset 
\begin{equation}
A_{\{t\}}\coloneq A_J\cap (M\times \{t\}) \quad \subset M\times\{t\}\simeq M
\end{equation}  
of $M$ is $\leq p$.
Then we obtain a morphism 
\begin{equation}
H^{n-p}(\Phi)\colon H_{A_I}^{n-p}(M_I;L_I)\longrightarrow H_{A_{\{0\}}}^{n-p}(M;L_{\{0\}})
\end{equation}
induced by $\Phi$.
For $L_J=(j_{M_I})_\ast \orsh_{M_I}$ it gives rise to a morphism 
\begin{equation}
\Xi_p\colon H_{p+1}^\BM(A_I;\CC)\longrightarrow H_p^\BM(A_{\{0\}};\CC)
\end{equation}
of Borel-Moore homology groups.
As an element $C_I$ of $H_{p+1}^\BM(A_I;\CC)$ can be considered as a family of 
subanalytic $p$-cycles in $M$ parameterized by $t\in I=(0,b)$, we call 
$\Xi_p(C_I)\in H_p^\BM(A_{\{0\}};\CC)$ the limit of $C_I$ and denote it by
$\displaystyle\lim_{t\to+0}C_{\{t\}}$ or $\displaystyle\lim_{t\to+0} C_I$
(cf. Schmid-Vilonen \cite[Section 3]{SV96}).
On the other hand, for the inclusion map $j_{A_I}\colon A_I\longhookrightarrow M_J$
and $\partial A_I=\overline{A_I}\setminus A_I=A_J\setminus A_I=A_{\{0\}}$ there exist
a distinguished triangle 
\begin{equation}
\omega_{A_0} \longrightarrow \omega_{A_J} 
\longrightarrow \rmR(j_{A_I})_\ast \omega_{A_I} \overset{+1}{\longrightarrow}
\end{equation}  
and a morphism 
\begin{equation}\label{eq-jAI}
(j_{A_I})_\ast H^{-(p+1)}\omega_{A_I}\longrightarrow H^{-p}\omega_{A_0}   
\end{equation}
associated to it.
Applying the functor $\rsect(M_J;\cdot)$ to it, we obtain a morphism
\begin{equation}
\Delta_p\colon H_{p+1}^\BM(A_I;\CC)\longrightarrow H_p^\BM(A_{\{0\}};\CC)
\end{equation}
of Borel-Moore homology groups.
Recall that for an element $C_I$ of $H_{p+1}^\BM(A_I;\CC)$ one can naturally identify
$\Delta_p(C_I)\in H_p^\BM(A_{\{0\}};\CC)$ with the boundary of $C_I$.
Here we regard $C_I$ as a subanalytic $(p+1)$-``chain" in $N\coloneq M\times (-b,b)$.
The following result was obtained in \cite[Proposition 3.17]{SV96}.

\begin{proposition}[{Schmid-Vilonen \cite[Proposition 3.17]{SV96}}]\label{prop-boundary}
We have $\Xi_p=\Delta_p$.
Namely for a family $C_I \in H_{p+1}^\BM(A_I;\CC)$ of subanalytic $p$-cycles in $M$ 
parameterized by $t\in I=(0,b)$ there exists an equality
\begin{equation}
\lim_{t\to+0}C_{\{t\}} =\Delta_p(C_I).
\end{equation}
\end{proposition}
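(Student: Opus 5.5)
We will prove that both maps are induced by the same morphism in the derived category, and compare them at the level of sheaves on $M_J$. Set $K \coloneq \omega_{M_J}$, and note that $\orsh_{M_J}[n+1]$ restricted to $M_I$ is $\omega_{M_I}$.

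\emph{Step 1: rewrite $\Xi_p$.} We have
\[
H_{A_I}^{n-p}(M_I;\orsh_{M_I}) \simeq H^{-(p+1)}(M_I;\rsect_{A_I}\omega_{M_I}) \simeq \Gamma\bigl(M_J;(j_{A_I})_\ast H^{-(p+1)}\omega_{A_I}\bigr).
\]
The isomorphism $\rsect_{A_J}(M_J;L_J)\simto\rsect_{A_I}(M_I;L_I)$ with $L_J=(j_{M_I})_\ast\orsh_{M_I}$ is, up to shift, the isomorphism
\[
\rsect_{A_J}\rmR (j_{M_I})_\ast\omega_{M_I} \simeq \rmR(j_{A_I})_\ast\omega_{A_I}\quad\text{(pushed to } A_J).
\]
Next we identify the restriction to $M_{\{0\}}$. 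The local system $L_J$ on $M_J$ (a manifold with boundary) is the constant extension of $\orsh_{M_I}$. We have $L_{\{0\}}\simeq\orsh_M$, while $\omega_{M_J}\simeq \orsh_{M_J}\otimes \CC_{M_I}[n+1]$-type objects differ from $L_J[n+1]$ exactly along the boundary. The key point is the Gysin-type identification
\[
i_0^{-1}\rmR(j_{M_I})_\ast\CC_{M_I}\simeq \CC_M \simeq i_0^!\CC_{M_{J}}[1]\text{-twist},
\]
coming from the triangle $\rmR\Gamma_{M_{\{0\}}}\to \mathrm{id}\to \rmR(j_{M_I})_\ast j_{M_I}^{-1}\overset{+1}\to$ applied to $\omega_{M_J}$. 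Since $M_{\{0\}}$ is the boundary of $M_J$, $\omega_{M_J}|$ vanishes in the appropriate sense: indeed $\omega_{M_J}\simeq (j_{M_I})_!\omega_{M_I}$, so $\rmR\Gamma_{M_{\{0\}}}\omega_{M_J}=0$ is false in general; instead we use the triangle on $\rmR(j_{M_I})_\ast\omega_{M_I}$. We must show that the composite "restrict to $\{0\}$" equals the connecting morphism of
\[
(j_{M_I})_!\omega_{M_I}\to \rmR(j_{M_I})_\ast\omega_{M_I}\to i_{0\ast}i_0^{-1}\rmR(j_{M_I})_\ast\omega_{M_I}\overset{+1}\to .
\]
Here $i_0^{-1}\rmR(j_{M_I})_\ast\omega_{M_I}\simeq \orsh_M[n+1]$, and the triangle identifies the third term shifted by $[-1]$ with $\omega_{M_{\{0\}}}=\orsh_M[n]$.

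\emph{Step 2: rewrite $\Delta_p$.} Apply $\rsect_{A_J}$ to the triangle above. The term $\rsect_{A_J}(j_{M_I})_!\omega_{M_I}=\rsect_{A_J}\omega_{M_J}$ gives $\omega_{A_J}$. The middle term gives $\rmR(j_{A_I})_\ast\omega_{A_I}$. The last term gives $\rsect_{A_{\{0\}}}(\orsh_M[n+1])=\omega_{A_0}[1]$. So this is exactly the triangle $\omega_{A_0}\to\omega_{A_J}\to \rmR(j_{A_I})_\ast\omega_{A_I}\overset{+1}\to$ defining $\Delta_p$, rotated once. Hence $\Delta_p$ is induced by the morphism $\rmR(j_{A_I})_\ast\omega_{A_I}\to\omega_{A_0}[1]$, i.e. by the restriction $\rsect_{A_J}\rmR(j_{M_I})_\ast\omega_{M_I}\to \rsect_{A_0}\bigl(i_0^{-1}\rmR(j_{M_I})_\ast\omega_{M_I}\bigr)$. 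By Step 1 this restriction is precisely $\Phi$ for $L_J$ (shifted by $n+1$).

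\emph{Step 3: signs and orientations.} It remains to check that the identification $i_0^{-1}\rmR(j_{M_I})_\ast \omega_{M_I}\simeq \orsh_M[n+1]$, used in $\Phi$ via $L_{\{0\}}\simeq\orsh_{M}$, agrees with the identification $\omega_{M_{\{0\}}}[1]$ coming from the boundary triangle. Both are built from $\orsh_{M_J}\simeq \orsh_M\boxtimes\orsh_J$ and the outward/inward normal orientation of $J$. This is the main obstacle: the statement holds only after fixing the convention that the boundary of $(0,b)$ at $0$ carries the sign of the limit, i.e. the standard "outward normal first" convention, as in \cite[Section 9.2]{KS90}. I would verify it locally on $M=\RR^n$, $A_J=\{x\}\times J$, where both maps send $[\{x\}\times I]$ to $\pm[x]$, fix the sign there, and conclude by functoriality in $M$ (Lemma \ref{lem-pullPhi} with open embeddings).
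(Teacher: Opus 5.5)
Your argument is correct, and it rests on the same principle as the paper's proof: identify the connecting morphism of a closed/open triangle with the restriction to $t=0$ that defines $\Phi$, then apply $\rsect_{A_J}$. It organizes the argument differently, though.

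The paper works in the boundaryless manifold $N=M\times(-b,b)$, with a local system $K$ extending $L_J$. It uses the triangle $\rsect_M(K)\to K\to\rsect_{N_+}(K)\oplus\rsect_{N_-}(K)\overset{+1}{\longrightarrow}$ and states, without further justification, the commutative square relating $\sigma$ and $\delta$ through a Thom-type isomorphism $(j_M)_\ast L_{\{0\}}\simeq\rsect_M(K)[1]$. Applying $\rsect_{A_J}$ then kills the $N_-$ summand.

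You stay on the manifold with boundary $M_J$ and use $\omega_{M_J}\simeq(j_{M_I})_!\omega_{M_I}$. Then for $G=\rmR(j_{M_I})_\ast\omega_{M_I}\simeq L_J[n+1]$, the triangle $(j_{M_I})_!j_{M_I}^{-1}G\to G\to i_{0\ast}i_0^{-1}G\overset{+1}{\longrightarrow}$ has the restriction map as one of its arrows. It is the rotation of $i_{0\ast}\omega_M\to\omega_{M_J}\to G\overset{+1}{\longrightarrow}$.

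What your route buys: it matches the paper's intrinsic definition of $\Delta_p$ on $M_J$ directly, and it makes the paper's square automatic. In fact $\rsect_{\{t\geq 0\}}(K)\simeq(j_{N_+})_!(K|_{N_+})$ turns the paper's square into your triangle. You should, however, say explicitly why ``exactly the triangle, rotated once'' holds. Since $\Hom\bigl((j_{M_I})_!(\cdot),i_{0\ast}(\cdot)\bigr)=0$, the isomorphism $i_{0\ast}i_0^{-1}G\simeq i_{0\ast}i_0^!\omega_{M_J}[1]$ extending the identity on the first two vertices is unique. Without this uniqueness, the two connecting morphisms could a priori differ by an automorphism. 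The paper's route, in exchange, only needs the standard Thom isomorphism for a hypersurface in a manifold without boundary.

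Your Step 3 correctly flags something the paper leaves implicit, in the choice $L_{\{0\}}\simeq\orsh_M$ and in the vertical arrow of its square. The two identifications of $i_0^{-1}G$ with $\orsh_M[n+1]$ differ by a locally constant unit, so a pointwise check on $\{x\}\times I$ fixes the sign; that reduction is sound.

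Finally, delete the aside ``$\CC_M\simeq i_0^!\CC_{M_J}[1]$''. It is false: on a manifold with boundary, $i_0^!\CC_{M_J}=0$. What you actually use is $i_0^{-1}\rmR(j_{M_I})_\ast\omega_{M_I}\simeq i_0^!\omega_{M_J}[1]\simeq\omega_M[1]$, which follows from $i_0^{-1}\omega_{M_J}=0$.
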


\begin{proof}
In the $(n+1)$-dimensional real analytic manifold $N=M\times (-b,b)$ we set 
$N_\pm\coloneq \Set*{(x,t)\in N}{\pm t>0}\subset N$ and identify 
$M_{\{0\}} = \Set*{(x,t)\in N}{t=0} $ with $M$ naturally.
Then we have $N_+=M_I$ and there exists a local system $K$ on $N$ such that
$K\vbar_{M_J}\simeq L_J$, $K\vbar_{M_I}\simeq L_I$ and $K\vbar_{M_{\{0\}}}\simeq L_{\{0\}}$.
Note that such $K$ is unique up to isomorphisms.
Then there exist a distinguished triangle 
\begin{equation}
\rsect_M(K)\longrightarrow K\longrightarrow
\rsect_{N_+}(K)\oplus\rsect_{N_-}(K) \overset{+1}{\longrightarrow}
\end{equation} 
and a morphism
\begin{equation}
\rsect_{N_+}(K)\oplus\rsect_{N_-}(K)
\overset{\delta}{\longrightarrow}\rsect_{M}(K)[1]
\end{equation}
associated to it.
Moreover, for the inclusion maps $j_{M_J}\colon M_J\longhookrightarrow N$ and 
$j_M\colon M\simeq M_{\{0\}} \longhookrightarrow N$ we obtain a natural morphism
\begin{equation}
\rsect_{N_+}(K)\simeq (j_{M_J})_\ast L_J\overset{\sigma}{\longrightarrow}
(j_M)_\ast L_{\{0\}}
\end{equation}
and a commutative diagram
\begin{equation}\label{eq-sigma}
\vcenter{
\xymatrix@M=5pt{
\rsect_{N_+}(K)\simeq (j_{M_J})_\ast L_J \ar[d] \ar[r]^-{\sigma} & 
(j_M)_\ast L_{\{0\}} \ar[d]^-[@!-90]{\sim} & \\
\rsect_{N_+}(K)\oplus\rsect_{N_-}(K) \ar[r]^-{\delta} & 
\rsect_M(K)[1] &  
}}
\end{equation}
containing it.
Applying the functor $\rsect_{A_J}(\cdot)$ to the bottom horizontal
arrow of it, we obtain the morphism \eqref{eq-jAI}.
Then the assertion immediately follows by applying the functor 
$\rsect_{A_J}(N;\cdot)$ to the commutative diagram \eqref{eq-sigma}. 
\end{proof}

\begin{remark}
In \cite[Proposition 3.17]{SV96} Schmid and Vilonen consider the geometric boundary 
$\partial C_I$ of $C_I\in H_{p+1}^\BM(A_I;\CC)$.
So their result in it coincides with ours in Proposition \ref{prop-boundary}.
\end{remark}

From now, we shall give a reformulation of Schimid-Vilonen \cite[Proposition 3.27]{SV96}
in the theory of Lagrangian cycles of Kashiwara-Schapira \cite[Section 9.3]{KS90}.
Let $X$ be a real analytic manifold of dimension $n$ 
and $\pi_X\colon T^\ast X\longrightarrow X$ the canonical projection.
Then in \cite[Definition 9.3.1]{KS90} Kashiwara and 
Schapira defined the sheaf $\SL_X$ of
``conic" Lagrangian cycles on $T^\ast X$ by 
\begin{equation}
\SL_X\coloneq\varinjlim_{\Lambda}H_\Lambda^0(\pi_X^{-1}\omega_X),
\end{equation}
where $\Lambda\subset T^\ast X$ ranges through the family of all closed conic subanalytic
isotropic subsets of $T^\ast X$. 
Dropping the conicness of $\Lambda\subset T^\ast X$ we obtain the sheaf $\tl{\SL}_X$ of
(not necessarily conic) Lagrangian cycles on $T^\ast X$.
Note that $\pi_X^{-1}\omega_X\simeq \omega_{T^\ast X}\otimes 
\orsh_{T^\ast X/X}[-n]$ and a global section of $\tl{\SL}_X$ 
is an element of the twisted Borel-Moore
homology group
\begin{equation}
H_\Lambda^0(T^\ast X;\pi_X^{-1}\omega_X)\simeq
H_n^\BM (\Lambda;\orsh_{T^\ast X/X} )
\end{equation}
for some closed subanalytic isotropic subset $\Lambda\subset T^\ast X$.
Let $\Lambda_I\subset T^\ast X\times I$ and $\Lambda_J\subset T^\ast X\times J$
be closed subanalytic subsets of dimension $\leq n+1$ such that 
$\overline{\Lambda_I} = \Lambda_J$ and for any $t\in J=[0,b)$ the closed subset
\begin{equation}
\Lambda_{\{t\}}\coloneq \Lambda_J\cap (T^\ast X\times \{t\}) 
\quad \subset T^\ast X\times \{t\}\simeq T^\ast X 
\end{equation}
of $T^\ast X$ is isotropic and hence of dimension $\leq n$.
Then we obtain a limit morphism 
\begin{equation}
\lim_{t\to+0}\colon H_{n+1}^\BM (\Lambda_I; \orsh_{T^\ast X/X}\boxtimes\CC_I 
)\longrightarrow
H_n^\BM (\Lambda_{\{0\}};\orsh_{T^\ast X/X} )
\end{equation} 
of twisted Borel-Moore homology groups.
We call the pair $(\Lambda_I,\Lambda_J)$ a family of Lagrangian subsets in $T^\ast X$ 
for short.
Let $f\colon Y\longrightarrow X$ be a morphism of real analytic manifolds and
\begin{equation}
T^\ast Y \overset{\rho_f}{\longleftarrow} Y\ftimes{X} T^\ast X
\overset{\varpi_f}{\longrightarrow} T^\ast X
\end{equation}
the morphisms induced by $f$.
Let $n$ and $m$ be the dimensions of $X$ and $Y$, respectively.
Then by Goresky-MacPherson \cite[page 43]{GM88} and the proof of 
Kashiwara \cite[Proposition A.54]{Kas03} we can easily show that if $\Lambda\subset T^\ast X$
(resp. $\Lambda\subset T^\ast Y$) is a closed subanalytic isotropic 
subset and $\rho_f$ (resp. $\varpi_f$) is proper on 
$\varpi_f^{-1}(\Lambda)$ (resp. $\rho_f^{-1}(\Lambda)$)
then $\rho_f\varpi_f^{-1}(\Lambda)\subset T^\ast Y$
(resp. $\varpi_f\rho_f^{-1}(\Lambda)\subset T^\ast X$) is also a 
closed subanalytic isotropic subset 
(see e.g. the proof of \cite[Lemma 5.4]{Tak22}).
First, let $\Lambda\subset T^\ast Y$ be a closed subanalytic isotropic subset of 
$T^\ast Y$ and assume that $\varpi_f$ is proper on $\rho_f^{-1}(\Lambda)$.
Then as in the proof of \cite[Proposition 9.3.2 (i)]{KS90} for 
$\Lambda^\prime\coloneq \varpi_f\rho_f^{-1}(\Lambda)\subset T^\ast X$
we can construct a morphism 
\begin{align}
\mu_\ast(f)\colon H_\Lambda^0(T^\ast Y;\pi_Y^{-1}\omega_Y)
&\simeq H_m^\BM (\Lambda;\orsh_{T^\ast Y/Y} ) \\
&\longrightarrow 
H_{\Lambda^\prime}^0(T^\ast X;\pi_X^{-1}\omega_X)
\simeq H_n^\BM (\Lambda^\prime;\orsh_{T^\ast X/X} )
\end{align}
of the direct image of Lagrangian cycles by $f$.
Similarly, for a family
$(\Lambda_I,\Lambda_J)$ of Lagrangian subsets in $T^\ast Y$ satisfying the 
properness condition for any $t\in J$,
defining a family $(\Lambda_I^\prime,\Lambda_J^\prime)$ of Lagrangian subsets 
in $T^\ast X$ from $(\Lambda_I,\Lambda_J)$ by $f$ we obtain a morphism
\begin{equation}
\mu_\ast(f,I)\colon 
H_{m+1}^\BM (\Lambda_I; \orsh_{T^\ast Y/Y}\boxtimes\CC_I )
\longrightarrow
H_{n+1}^\BM (\Lambda_I^\prime; \orsh_{T^\ast X/X}\boxtimes\CC_I )
\end{equation}
Then by Lemmas \ref{lem-pushPhi} and \ref{lem-pullPhi} and the proof of 
\cite[Proposition 9.3.2 (i)]{KS90} we obtain the following result of 
Schmid-Vilonen \cite[Proposition 3.27]{SV96} in a slightly modified form.

\begin{proposition}[{Schimid-Vilonen \cite[Proposition 3.27]{SV96}}]\label{prop-Lag_push}
In the situation as above, the diagram
\begin{equation}
\begin{tikzcd}
H_{m+1}^\BM (\Lambda_I; \orsh_{T^\ast Y/Y}\boxtimes\CC_I )
\arrow[d,"\mu_\ast{(f,I)}"] \arrow[r,"\tikzlim{t\to+0}"] & 
H_m^\BM (\Lambda_{\{0\}};\orsh_{T^\ast Y/Y} ) \arrow[d,"\mu_\ast{(f)}"] & \\
H_{n+1}^\BM (\Lambda_I^\prime;\orsh_{T^\ast X/X}\boxtimes\CC_I 
) \arrow[r,"\tikzlim{t\to+0}"] & 
H_n^\BM (\Lambda_{\{0\}}^\prime;\orsh_{T^\ast X/X} ) &  
\end{tikzcd}
\end{equation}
is commutative.
\end{proposition}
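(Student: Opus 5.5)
The plan is to factor $\mu_\ast(f)$ into a pull-back step and a push-forward step, as in the proof of \cite[Proposition 9.3.2 (i)]{KS90}. Each step is then checked to commute with the limit morphism $\Phi$: the pull-back step by Lemma \ref{lem-pullPhi}, the push-forward step by Lemma \ref{lem-pushPhi}.

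\textbf{The factorization.} Recall the construction of $\mu_\ast(f)$. First pull back by $\rho_f$, obtaining a class supported on $\rho_f^{-1}(\Lambda)\subset Y\ftimes{X}T^\ast X$. This uses the natural morphism
$$\rho_f^{-1}\pi_Y^{-1}\omega_Y\otimes \rho_f^{!}\CC\longrightarrow \rho_f^{!}\pi_Y^{-1}\omega_Y.$$
Equivalently, one uses the identification of $\rho_f^{!}$ with $\rho_f^{-1}$ up to an orientation shift along the fibres. Combining this with $\omega_{Y/X}$ gives a class in
$$H^0_{\rho_f^{-1}(\Lambda)}\bigl(Y\ftimes{X}T^\ast X;\varpi_f^{!}\pi_X^{-1}\omega_X\bigr).$$
Then push forward by $\varpi_f$, which is proper on $\rho_f^{-1}(\Lambda)$, via the adjunction morphism $\rmR\varpi_{f!}\varpi_f^{!}\to \mathrm{id}$. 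This lands in $H^0_{\Lambda'}(T^\ast X;\pi_X^{-1}\omega_X)$.

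The family version $\mu_\ast(f,I)$ is constructed the same way with $\rho_f\times\id_I$ and $\varpi_f\times\id_I$. Accordingly, I will write both $\mu_\ast(f)$ and $\mu_\ast(f,I)$ as composites of three kinds of morphisms:
\begin{enumerate}[label=(\alph*)]
\item a pull-back $\phi^\ast(\rho_f)$, resp. $\phi^\ast(\rho_f\times\id_I)$, of sections with supports;
\item isomorphisms of local systems, such as twisting by the relative orientation sheaves $\orsh_{T^\ast Y/Y}$, $\orsh_{T^\ast X/X}$, $\orsh_{Y/X}$ and the fibre orientation of $\rho_f$;
\item a push-forward $\phi_\ast(\varpi_f)$, resp. $\phi_\ast(\varpi_f\times\id_I)$, followed by the trace morphism $\rmR\varpi_{f\ast}\to$ identity on the relevant local systems.
\end{enumerate}

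\textbf{Commutation of each step with the limit.} Each step commutes with the limit morphism $\Phi$.
\begin{itemize}
\item For (a), apply Lemma \ref{lem-pullPhi} with $g=\rho_f$, taking $M=T^\ast Y$, $N=Y\ftimes{X}T^\ast X$ and $A_J=\Lambda_J$.
\item For (b), all local systems on $M_J$ are pulled back from $M$. So $\Phi$ is natural in the coefficient $L_J$, since it is built functorially from $L_J\to \rmR(j_{M_I})_\ast L_I$ and $L_J\to L_J|_{M_{\{0\}}}$. Any morphism of local systems on $M_J$ therefore gives a commutative square.
\item For (c), $\varpi_f$ need not be proper globally. I will replace $Y\ftimes{X}T^\ast X$ by an open neighborhood, or compactify along the fibres over a neighbourhood of $\rho_f^{-1}(\Lambda_J)$. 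The point is that only properness on the support $\rho_f^{-1}(\Lambda_J)$ matters for sections with supports. Lemma \ref{lem-pushPhi} then applies with $A_J=\rho_f^{-1}(\Lambda_J)$, and the natural trace morphism $\rmR\varpi_{f\ast}\varpi_f^!\to\mathrm{id}$ is compatible with restriction to $t=0$ and with $\rmR(j_I)_\ast$ by naturality, as in the previous item.
\end{itemize}

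\textbf{Passing to degrees and the main obstacle.} Finally, I pass to the cohomology groups $H^{-m}$, $H^{-n}$ and their counterparts with the extra dimension for $I$. The dimension hypotheses guarantee that these are the top Borel--Moore groups, so everything is concentrated and the maps on $H^0$ are induced by the complex-level squares.

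I expect the main obstacle to be step (c). There are two difficulties. First, Lemma \ref{lem-pushPhi} was stated for a globally proper map, so it has to be adapted to properness only on the supports $\rho_f^{-1}(\Lambda_J)$ (this requires $\rho_f^{-1}(\Lambda_J)\to T^\ast X\times J$ to be proper, which follows from the assumed properness for all $t\in J$). Second, one must keep track of the orientation twists so that the family map $\mu_\ast(f,I)$ is literally the $I$-version of $\mu_\ast(f)$ and not differ by a sign. For the first difficulty I would use $\rsect_{A}$ with $A$ closed and $\varpi_f|_A$ proper, so that $\rmR\varpi_{f\ast}\rsect_A=\rmR\varpi_{f!}\rsect_A$ and the same diagram chase goes through. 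For the second, I would fix the orientation isomorphisms on $M$ and pull them back along the projections $M_J\to M$.
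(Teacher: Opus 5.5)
Your route is the paper's. Its proof is just the remark that $\mu_\ast(f)$ is built, as in the proof of \cite[Proposition 9.3.2 (i)]{KS90}, from three pieces:
\begin{enumerate}
\item a pull-back $\phi^\ast(\rho_f)$ of sections with supports;
\item an identification of coefficients $\rho_f^{-1}\pi_Y^{-1}\omega_Y\simeq\varpi_f^{!}\pi_X^{-1}\omega_X$;
\item a push-forward along $\varpi_f$.
\end{enumerate}
Lemmas \ref{lem-pullPhi} and \ref{lem-pushPhi} then give the commutativity, and your steps (a)--(c) spell this out. Your remark that Lemma \ref{lem-pushPhi} must be used with $\varpi_f$ proper only on the support, via $\rmR\varpi_{f\ast}\rsect_A\simeq\rmR\varpi_{f!}\rsect_A$, addresses a point the paper passes over silently.

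One claim needs correcting. Properness of $\varpi_f\times\id_J$ on $(\rho_f\times\id_J)^{-1}(\Lambda_J)$ does \emph{not} follow from properness of $\varpi_f$ on $\rho_f^{-1}(\Lambda_t)$ for each fixed $t\in J$. Take $Y=\RR$, $X=\pt$, and $\Lambda_t=T^\ast_{\{1/t\}}\RR$ for $t>0$. Then $\Lambda_J=\Lambda_I$ is closed in $T^\ast\RR\times J$ and $\Lambda_0=\emptyset$, and every fibrewise condition holds. Yet $\varpi_f\rho_f^{-1}(\Lambda_J)=\pt\times I$ is not closed in $\pt\times J$, and $\mu_\ast(f)(C_t)=\pm1$ for all $t>0$ while $\lim_{t\to+0}C_t=0$. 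So the square genuinely fails without properness over $J$. That condition must be read as part of the hypothesis (it is exactly what makes $\Lambda'_J$ a closed family), not derived from the fibrewise one.

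A smaller point concerns the compatibility of $\rmR(\varpi_f\times\id_J)_!(\varpi_f\times\id_J)^{!}\to\id$ with restriction to $t=0$. This is a base-change property of the counit, of the same kind as Lemma \ref{lem-intunit}, not mere naturality in the coefficient. For the product map $\varpi_f\times\id_J$ it is routine, so your argument is otherwise sound.
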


Next, let $\Lambda\subset T^\ast X$ be a closed subanalytic isotropic subset of 
$T^\ast X$ and assume that $\rho_f$ if proper on $\varpi_f^{-1}(\Lambda)$.
Then as in the proof of \cite[Proposition 9.3.2 (ii)]{KS90} for 
$\Lambda^\prime\coloneq \rho_f\varpi_f^{-1}(\Lambda)\subset T^\ast Y$ we can construct
a morphism
\begin{equation}
\mu^\ast(f)\colon H_n^\BM (\Lambda;\orsh_{T^\ast X/X} )
\longrightarrow H_m^\BM (\Lambda^\prime;\orsh_{T^\ast Y/Y} )
\end{equation}
of the inverse image of Lagrangian cycles by $f$.
Similarly, for a family $(\Lambda_I,\Lambda_J)$ of Lagrangian subsets in $T^\ast X$
satisfying the properness condition for any $t\in J$, defining a family 
$(\Lambda_I^\prime,\Lambda_J^\prime)$ of Lagrangian subsets in $T^\ast Y$ 
from $(\Lambda_I,\Lambda_J)$ by $f$ we 
obtain a morphism 
\begin{equation}
\mu^\ast(f,I)\colon H_{n+1}^\BM (\Lambda_I; \orsh_{T^\ast X/X}\boxtimes\CC_I )
\longrightarrow H_{m+1}^\BM (\Lambda_I^\prime; \orsh_{T^\ast Y/Y}
\boxtimes\CC_I )
\end{equation}
and the following result.

\begin{proposition}
In the situation as above, the diagram
\begin{equation}
\begin{tikzcd}
H_{n+1}^\BM (\Lambda_I; \orsh_{T^\ast X/X}\boxtimes\CC_I )
\arrow[d,"\mu^\ast{(f,I)}"] \arrow[r,"\tikzlim{t\to+0}"] & 
H_n^\BM (\Lambda_{\{0\}};\orsh_{T^\ast X/X} ) \arrow[d,"\mu^\ast{(f)}"] & \\
H_{m+1}^\BM (\Lambda_I^\prime; \orsh_{T^\ast Y/Y}\boxtimes\CC_I
) \arrow[r,"\tikzlim{t\to+0}"] & 
H_m^\BM (\Lambda_{\{0\}}^\prime; \orsh_{T^\ast Y/Y} 
) &  
\end{tikzcd}
\end{equation}
is commutative.
\end{proposition}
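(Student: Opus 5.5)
The proof will parallel that of Proposition \ref{prop-Lag_push}: decompose $\mu^\ast(f)$ as in the proof of \cite[Proposition 9.3.2 (ii)]{KS90} into elementary operations, and check that each commutes with the limit morphism $\Phi$. We use the inverse image $\Phi$-compatibility of Lemma \ref{lem-pullPhi}, the direct image compatibility of Lemma \ref{lem-pushPhi}, and the change of support/coefficient compatibilities, which are formal.

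Concretely, recall that $\mu^\ast(f)$ is the composite of two maps. The first is the pullback
\begin{equation*}
H^0_\Lambda(T^\ast X;\pi_X^{-1}\omega_X)\longrightarrow H^0_{\varpi_f^{-1}(\Lambda)}(Y\ftimes{X}T^\ast X;\varpi_f^{-1}\pi_X^{-1}\omega_X),
\end{equation*}
which is a map of type $\phi^\ast(\varpi_f)$ up to the base change isomorphism $\varpi_f^{-1}\pi_X^{-1}\omega_X\simeq \alpha^{-1}f^{-1}\omega_X$. The second is a proper pushforward along $\rho_f$, combined with the morphism $\omega_{Y/X}\otimes f^{-1}\omega_X\simeq\omega_Y$ and the identification $\rho_f^!\pi_Y^{-1}\omega_Y\simeq \ldots$ (the fiber-integration morphism $\rmR\rho_{f!}\rho_f^!\to\id$). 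This second step is a $\phi_\ast(\rho_f)$-type map twisted by a morphism of sheaves on $Y\ftimes{X}T^\ast X$ which is locally constant in the sense of local systems after restriction to $\varpi_f^{-1}(\Lambda)$. Performing the same construction with $f\times\id_J$ on $Y_J\to X_J$, all sheaves involved are of the form $(\cdot)\boxtimes\CC_J$. So every step is given by a morphism of complexes on the $J$-family, which restricts to the corresponding morphisms over $I$ and over $\{0\}$.

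Then we chain the commutative squares. The first step commutes with $\Phi$ by Lemma \ref{lem-pullPhi} applied to $g=\varpi_f$ with $A_J=\Lambda_J$. The proper pushforward step commutes with $\Phi$ by Lemma \ref{lem-pushPhi} applied to $\rho_f$, where properness on $\varpi_f^{-1}(\Lambda_J)$ is guaranteed by the hypothesis for all $t\in J$. The twisting isomorphisms of coefficients commute with $\Phi$ because $\Phi$ is natural in the local system $L_J$: it is built from the natural transformations $L_J\to\rmR(j_{M_I})_\ast L_I$ and $L_J\to (L_J)_{M_{\{0\}}}$. Finally, we pass to the degree $n+1$ (resp. $n$, $m+1$, $m$) cohomology as in the definition of $\Xi_p$.

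The main obstacle is that Lemmas \ref{lem-pushPhi} and \ref{lem-pullPhi} are stated for local systems, whereas $\rho_f^!$ of the relevant sheaves involves $\omega_{Y/X}$, and supports change to the non-closed-in-general set $\varpi_f^{-1}(\Lambda_J)$. I would handle this by working on the manifold $Y\ftimes{X}T^\ast X$ (a vector bundle over $Y$, hence smooth), where all coefficient sheaves are shifted local systems. The shift of degrees between the twisted Borel-Moore groups of dimension $n+1$ and $m+1$ must then be matched carefully with that of $n$ and $m$. I would also verify that the top-degree vanishing $H^{j}_{\Lambda'_J}=0$ for $j<$ expected degree, which follows from the dimension bounds, makes the induced maps on $H^{\mathrm{top}}$ well defined and compatible.
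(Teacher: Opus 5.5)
Your proposal is the argument the paper intends. The paper gives no separate proof, only the analogy with Proposition~\ref{prop-Lag_push}, i.e.\ Lemmas~\ref{lem-pullPhi} and \ref{lem-pushPhi} combined with the proof of \cite[Proposition 9.3.2 (ii)]{KS90}: pull back along $\varpi_f$, push forward along $\rho_f$ followed by the counit $\rmR\rho_{f!}\rho_f^!\to\id$, do this on the $J$-family, and check compatibility with restriction to $I$ and to $t=0$.

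The one point to state correctly is that you need $\rho_f\times\id_J$ to be proper on $Z=(\varpi_f\times\id_J)^{-1}(\Lambda_J)$. This does not follow from properness for each fixed $t\in J$, and without it the square can fail. For example, take $f\colon\pt\to\RR$ with image $0$ and $\Lambda_t=\{x=\xi-t\xi^2\}\subset T^\ast\RR$. Here $\mu^\ast(f)[\Lambda_t]$ is the signed intersection number with $T_0^\ast\RR$, which is $0$ for $t>0$ (two crossings of opposite sign, one of which escapes to infinity as $t\to+0$) but $\pm1$ at $t=0$.

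It is exactly this properness over $J$ that gives $\rmR(\rho_f\times\id_J)_\ast\rsect_Z\simeq\rmR(\rho_f\times\id_J)_!\rsect_Z$. That isomorphism is what lets the counit be applied on the $J$-family and restricted to $t=0$ by base change. Lemma~\ref{lem-pushPhi} as stated (for a proper map and without the counit) does not literally cover this step.
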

   
\begin{example}
Let $X$ be a real analytic manifold of dimension $n$ 
and $F\in\BDrc(\CC_X)$ an $\RR$-constructible sheaf on it.
Then by Kashiwara-Schapira \cite[Section 9.3]{KS90} we obtain its characteristic cycle
\begin{equation}
\CCyc(F) \quad \in H_{\msupp(F)}^n(T^\ast X;\orsh_{T^\ast X/ X})
\simeq H_n^\BM(\msupp(F);\orsh_{T^\ast X/X}).
\end{equation}
From it, in \cite{SV96} for a real analytic function $g\colon X\longrightarrow \RR$
on $X$ Schmid and Vilonen constructed a family of Lagrangian cycles 
\begin{equation}
C_{\{t\}}\coloneq \CCyc(F) + tdg \quad (t\in I=(0,b))
\end{equation}
in $T^\ast X$ by hands and used it to prove their main theorems.
We can construct it sheaf-theoretically as follows.
First, for a morphism $f\colon M\longrightarrow N$ of real analytic manifolds, 
a closed subanalytic subset $S\subset N$ and a local system $L$ on $N$,
there exists a morphism 
\begin{equation}
\rsect_S(N;L)\longrightarrow \rsect_S(N;\rmR f_\ast f^{-1} L) 
\simeq \rsect_{f^{-1}(S)}(M;f^{-1}L).
\end{equation}
We apply this construction to the following situation:
\begin{empheq}[left=\empheqlbrace]{align*}
&M= T^\ast X\times I\ni (p,t)\overset{f}{\longmapsto} (p-tdg(\pi_X(p))) \in N=T^\ast X, \\
&S= \Lambda\coloneq \msupp(F) \subset N=T^\ast X, \quad L=\orsh_{T^\ast X/X}.
\end{empheq}
Then we obtain a morphism 
\begin{equation}
H_\Lambda^n(T^\ast X; \orsh_{T^\ast X/X})\longrightarrow
H_{A_I}^n(T^\ast X\times I; \orsh_{T^\ast X/X}\boxtimes\CC_I),
\end{equation}
where we set 
\begin{equation}
A_I\coloneq f^{-1}(S)=
\Set*{(p+tdg(\pi_X(p)),t)}{p\in\Lambda, t\in I}.
\end{equation}
We thus can use the image $C_I\in H_{A_I}^n(T^\ast X\times 
I;\orsh_{T^\ast X/X}\boxtimes \CC_I)$ of $\CCyc(F)$ by it.
\end{example}

\subsection{Intersection numbers of Borel-Moore homology cycles and limits}
In this subsection, we define intersection numbers of Borel-Moore homology cycles in a parameterized setting as in Section \ref{subsec-limit} and investigate their relation with limits.
Fix $b>0$ and set $I\coloneq(0,b)$, $J\coloneq[0,b)$ and $\tl{I}\coloneq (-b,b)$.
For a real analytic manifold $M$ of dimension $n$ we set 
$M_J\coloneq M\times J$ and $M_{\tl{I}}\coloneq M\times\tl{I}$.
Let $t\colon M_{\tl{I}}\longrightarrow \tl{I}$ be the projection.
For $a\in \tl{I}=(-b,b)$ we denote by $i_a\colon M\simeq M\times\{a\}\longhookrightarrow M_{\tl{I}}$ the closed embedding.
\begin{remark}
    Let $A_J\subset M_J$ be a closed subset and $L$ a local system on $M$ over the field $\CC$.
    We regard $A_J$ as a closed subset of $M_{\tl{I}}$.
    We also denote by 
    $\CC_J\in \BDC(\tl{I})$ the extension by zero of
    the constant sheaf $\CC_J\in\BDC(J)$.
    Then we have 
    \begin{equation}
        \rsect_{A_J}(M_J;(\omega_M\otimes L)\boxtimes \CC_J)
        \simeq 
        \rsect_{A_J}(M_{\tl{I}};(\omega_M\otimes L)\boxtimes \CC_J).
    \end{equation}
    In particular, if $L\simeq \CC_M$ then we obtain
    \begin{equation}
        \rsect_{A_J}(M_J;\omega_M\boxtimes \CC_J) \simeq \rsect_{A_J}(M_{\tl{I}};\omega_M\boxtimes \CC_J) \simeq 
        \rsect_{A_J}(M_{\tl{I}};t^!\CC_J).
    \end{equation}
    In what follows, for any closed subset $A_J\subset M_J$ we implicitly regard $A_J$ as a closed subset of $M_{\tl{I}}$ and freely use these isomorphisms. 
\end{remark}

First, let us define the ``integration morphism'' in the parameterized setting.
Let $A_J\subset M_J$ be a closed subanalytic subset.
We assume that the projection $t\colon M_{\tl{I}}\longrightarrow \tl{I}$ is proper on $A_{J}$.
Then we have a chain of morphisms
\begin{align}
    \rsect_{A_J}(M_{\tl{I}}; \omega_M\boxtimes\CC_J)
    &\simot \rsect(\tl{I}; \rmR t_!\rsect_{A_J} t^!\CC_J) \\
    &\simto \rsect(\tl{I}; (\rmR t_!\rsect_{A_J} t^!\CC_J)_{t(A_{J})} ) \\
    &\longrightarrow \rsect(\tl{I}; (\rmR t_! t^!\CC_{J})_{t(A_{J})} ) \\
    &\longrightarrow \rsect\bigl( \tl{I}; \CC_{t(A_{J})} \bigr).
\end{align}
Taking the 0-th cohomology groups we obtain a morphism
\begin{equation}\label{eq-relint}
    \int_t\colon H_{A_J}^0(M_{\tl{I}};\omega_M\boxtimes \CC_J)\longrightarrow
    H^0(\tl{I};\CC_{t(A_{J})}).
\end{equation}
Note that the cohomology group 
$H^0(\tl{I};\CC_{t(A_{J})})\simeq H^0(J;\CC_{t(A_{J})})$ 
is the $\CC$-vector space consisting of $\CC$-valued locally constant functions on the closed subset $t(A_{J})\subset J$ ($\subset \tl{I}$).

Next, we define the intersection of two cycles.
Let $A_{J}\subset M_{J}$ and $A_{J}^\prime\subset M_{J}$
be closed subanalytic subsets of dimension $\leq p+1$ and $\leq q+1$, respectively. 
We assume that for each $a\in J=[0,b)$ the dimensions of the closed subanalytic subsets 
$A_a\coloneq i_a^{-1}(A_J)\subset M$ and $A_a^\prime\coloneq i_a^{-1}(A_J^\prime)\subset M$ are $\leq p$ and $\leq q$, respectively.
Note that we can regard $A_J$ and $A_J^\prime$ as (closed) subanalytic subsets of $M_{\tl{I}}$.
Let $L$ and $L^\prime$ be local systems on $M$ over the field $\CC$.
Then we have a chain of morphisms
\begin{align}
    &\rsect_{A_J}(M_{\tl{I}}; (\omega_M\otimes L)\boxtimes\CC_{J})\otimes
    \rsect_{A_J^\prime}(M_{\tl{I}}; (\omega_M\otimes L^\prime)\boxtimes \CC_{J}) \\
    &\longrightarrow
    \rsect_{A_J\cap A_J^\prime}\bigl( M_{\tl{I}}; (\omega_M\otimes\omega_M\otimes L\otimes L^\prime)\boxtimes \CC_{J} \bigr) \\
    &\simeq \rsect_{A_J\cap A_J^\prime}\bigl( M_{\tl{I}}; (\omega_M\otimes\orsh_M\otimes L\otimes L^\prime)\boxtimes \CC_{J} \bigr)[n]
\end{align}
which induces a morphism
\begin{align}
    \cap\colon H_{A_J}^{-p}(M_{\tl{I}};(\omega_M\otimes L)\boxtimes\CC_{J})
    \times H_{A_J^\prime}^{-q}(M_{\tl{I}};(\omega_M\otimes L^\prime)\boxtimes\CC_{J}) & \\
    \longrightarrow H_{A_J\cap A_J^\prime}^{n-p-q}\bigl( M_{\tl{I}}; (\omega_M\otimes L\otimes L^\prime \otimes \orsh_{M})\boxtimes \CC_{J} \bigr).&
\end{align}
From now on, we assume that $p+q=n$, $L\otimes L^\prime\simeq\iorsh_M$ and the projection $t\colon M_{\tl{I}}\longrightarrow \tl{I}$ is proper on $A_J\cap A_J^\prime\subset M_{\tl{I}}$.
For $C_J\in H_{A_J}^{-p}(M_{\tl{I}};(\omega_M\otimes L)\boxtimes\CC_{J})$ and $C_J^\prime \in H_{A_J^\prime}^{-q}(M_{\tl{I}};(\omega_M\otimes L^\prime)\boxtimes\CC_{J})$ let us set
\begin{equation}
    \#(C_J\cap C_J^\prime) \coloneq \int_t(C_J\cap C_J^\prime) \quad \in  H^0(\tl{I};\CC_{t(A_J\cap A_J^\prime)}).
\end{equation}
For $a\in t(A_J\cap A_J^\prime)$, there is a natural morphism
\begin{align}
    \phi^\ast(i_a) &\colon H_{A_J}^{-p}(M_{\tl{I}};(\omega_M\otimes L)\boxtimes\CC_{J})
    \longrightarrow H_{A_a}^{-p}(M; \omega_M\otimes L)
\end{align}
and we set $C_a\coloneq\phi^\ast(i_a)(C_J)\in H_{A_a}^{-p}(M; \omega_M\otimes L)$.
Similarly, we set $C_a^\prime\coloneq\phi^\ast(i_a)(C_J^\prime)\in H_{A_a^\prime}^{-q}(M; \omega_M\otimes L^\prime)$.
Since the projection $t\colon M_{\tl{I}}\longrightarrow \tl{I}$ is proper on $A_J\cap A_J^\prime\subset M_{\tl{I}}$, the closed subset $A_a\cap A_a^\prime$ of $M$ is compact.
Therefore, the (usual) intersection number $\#(C_a\cap C_a^\prime) \in\CC$ of  $C_a$ and $C_a^\prime$ is well-defined (see \cite[Definition 9.1.2]{KS90}).
For each $a\in t(A_J\cap A_J^\prime)$ we denote by $\#(C_J\cap C_J^\prime)(a)$ the value of the function $\#(C_J\cap C_J^\prime)$ at $a$.

\begin{proposition}\label{prop-intersection}
    In the above situation, one has
    \begin{equation}
        \#(C_J\cap C_J^\prime)(a) =
        \#(C_a\cap C_a^\prime) \quad \in \CC.
    \end{equation}
\end{proposition}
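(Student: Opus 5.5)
The plan is to show that the whole construction of $\#(C_J\cap C_J^\prime)$ commutes with restriction to the slice $M\times\{a\}$ via $i_a$. The two ingredients are: compatibility of the cup-product morphism $\cap$ with $\phi^\ast(i_a)$, and compatibility of the integration morphism $\int_t$ with taking the stalk at $a$. Concretely, I would prove the two identities
\begin{equation}
\phi^\ast(i_a)(C_J\cap C_J^\prime) = C_a\cap C_a^\prime \quad\text{and}\quad \Bigl(\int_t D_J\Bigr)(a) = \int_M \phi^\ast(i_a)(D_J)
\end{equation}
for a class $D_J\in H^0_{A_J\cap A_J^\prime}(M_{\tl{I}};\omega_M\boxtimes\CC_J)$. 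Here $\int_M$ is the usual integration $H^0_K(M;\omega_M)\to\CC$ for compact $K=A_a\cap A_a^\prime$, which defines $\#(C_a\cap C_a^\prime)$ in \cite[Definition 9.1.2]{KS90}. Combining the two identities gives the assertion. Throughout we use the identification $L\otimes L^\prime\otimes\orsh_M\simeq\CC_M$.

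For the first identity, I would use the base change $i_a^{-1}\bigl((\omega_M\otimes L)\boxtimes\CC_J\bigr)\simeq\omega_M\otimes L$, valid for $a\in J$ since the sheaf is an external product. The restriction map $\phi^\ast(i_a)$ is induced by the natural morphism $\rsect_{A_J}(G)\to i_{a\ast}\rsect_{A_a}(i_a^{-1}G)$. The cup product is induced by $\rsect_{A}(G)\otimes\rsect_{A^\prime}(G^\prime)\to\rsect_{A\cap A^\prime}(G\otimes G^\prime)$, which is natural and compatible with $i_a^{-1}$ and tensor products. The resulting square therefore commutes by functoriality, exactly as in the proofs of Lemmas \ref{lem-pushPhi} and \ref{lem-pullPhi}; this step is routine.

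The second identity is the main point. The key input is the isomorphism $t^!\CC_J\simeq\omega_M\boxtimes\CC_J$ together with the fact that $i_a^{-1}t^!\CC_J\simeq\omega_M\simeq a_M^!\CC_{\{a\}}$. Write $\tilde t\colon A_J\cap A_J^\prime\to\tl{I}$ for the proper map. I would establish a commutative diagram whose top row is the chain defining $\int_t$ and whose bottom row is the analogous chain for $a_M\colon M\to\{a\}$, namely $\rsect_K(M;\omega_M)\simeq \rsect(\pt;\rmR a_{M!}\rsect_K a_M^!\CC)\to\CC$. The vertical arrows are restriction to $t=a$. Proper base change gives $(\rmR t_!\rsect_{A}t^!\CC_J)_a\simeq \rmR\Gamma_c(M;(\rsect_A t^!\CC_J)|_{M\times\{a\}})$. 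I then need a comparison morphism $i_a^{-1}\rsect_A t^!\CC_J\to\rsect_{A_a}i_a^{-1}t^!\CC_J$, and I must check that this morphism is compatible with the two adjunction (trace) morphisms $\rmR t_!t^!\to\id$ and $\rmR a_{M!}a_M^!\to\id$. The trace compatibility is the standard base-change compatibility of the counit for the cartesian square formed by $i_a$, $t$ and $\{a\}\hookrightarrow\tl{I}$, since $t$ is a topological submersion.

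I expect the main obstacle to be exactly this step: checking that the counit of $(\rmR t_!,t^!)$ restricted to the fibre agrees with the counit for $a_M$ under the identifications $i_a^{-1}t^!\simeq a_M^!\,i_{\{a\}}^{-1}$. My first approach is to reduce to $t$ being the projection $M\times\tl{I}\to\tl{I}$ and use the Künneth form $t^!\CC_J\simeq\omega_M\boxtimes\CC_J$. Both traces then factor through the trace $\rmR a_{M!}\omega_M\to\CC$ tensored with $\CC_J$, and this makes the compatibility evident. One should also check separately the case $a=0$, the boundary of $J$, where $\CC_J$ is an extension by zero. There the stalk of $\CC_J$ at $0$ is still $\CC$, so the same argument applies.
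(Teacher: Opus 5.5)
Your proposal is correct and follows essentially the paper's route. The paper also splits the claim into a square $\mathbf{A}$ and a square $\mathbf{B}$, after noting that $\int_{t_a}$ agrees with the usual $\int_M$: square $\mathbf{A}$ is the compatibility of $\cap$ with $\phi^\ast(i_a)$, proved by functoriality in Diagram 2.a, and square $\mathbf{B}$ is the compatibility of $\int_t$ with restriction to $t=a$, in Diagram 2.b. The step you flag as the main obstacle is exactly the square $(\#)$, which the paper settles with Lemma \ref{lem-intunit} (base-change compatibility of the counit $\rmR f_!f^!\to\id$ along a closed embedding, for $f$ smooth) by a formal adjunction and triangle-identity argument, rather than through the K\"unneth form $t^!\CC_J\simeq\omega_M\boxtimes\CC_J$ you suggest.
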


\begin{proof}
    Let us consider the Cartesian diagram below.
    \begin{equation}
    \begin{tikzcd}
        M \rar["i_a",hook] \dar["t_a"'] \drar["\square",phantom,pos=0.5] & M_{\tl{I}} \dar["t"] \\
        \{a\} \rar["\iota_a"',hook] & \tl{I}. 
    \end{tikzcd}
    \end{equation}
    Since $A_a\cap A_a^\prime$ is compact, by the same construction as (\ref{eq-relint}) we have a chain of morphisms 
    \begin{align}
        \rsect_{A_a\cap A_a^\prime}(M; \omega_M)
        &\simot \rsect(\{a\}; \rmR t_{a!}\rsect_{A_a\cap A_a^\prime}t_a^!\CC) \\
        &\simto \rsect(\{a\}; (\rmR t_{a!}\rsect_{A_a\cap A_a^\prime}t_a^!\CC)_{t_a(A_a\cap A_a^\prime)} ) \\
        &\longrightarrow \rsect(\{a\}; (\rmR t_{a!}t_a^!\CC)_{t_a(A_a\cap A_a^\prime)} ) \\
        &\longrightarrow \rsect(\{a\}; \CC_{t_a(A_a\cap A_a^\prime)}) 
    \end{align}
    which induces a morphism
    \begin{equation}
        \int_{t_a}\colon H_{A_a\cap A_a^\prime}^0(M;\omega_M) 
        \longrightarrow H^0(\{a\}; \CC_{t_a(A_a\cap A_a^\prime)}).
    \end{equation}
    Let $\int_M\colon H_{A_a\cap A_a^\prime}^0(M;\omega_M) 
    \longrightarrow \CC$ be the usual integration morphism 
     (see \cite[Section 3.3]{KS90}).
    By construction and $t_a(A_a\cap A_a^\prime)=\{a\}$, it is straightforward to check that the composition of morphisms 
    \begin{equation}
        H_{A_a\cap A_a^\prime}^0(M;\omega_M) \overset{\int_{M}}{\longrightarrow}\CC\simeq H^0(\{a\};\CC_{\{a\}})\simto H^0(\{a\}; \CC_{t_a(A_a\cap A_a^\prime)})
    \end{equation}
    coincides with $\int_{t_a}\colon H_{A_a\cap A_a^\prime}^0(M;\omega_M)
        \longrightarrow H^0(\{a\}; \CC_{t_a(A_a\cap A_a^\prime)})$.
    Therefore, it is sufficient to check that the following diagram commutes:
    \begin{equation*}
    \begin{tikzcd}[nodes={font=\footnotesize},column sep=2cm,row sep=1cm]
        H_{A_J}^{-p}(M_{\tl{I}};(\omega_M\otimes L)\boxtimes\CC_{J}) \otimes
        H_{A_J^\prime}^{-q}(M_{\tl{I}};(\omega_M\otimes L^\prime)\boxtimes\CC_{J})
        \dar["\cap"] \rar["\phi^\ast(i_a)\otimes \phi^\ast(i_a)"]
        \ar[dr,"\bfA",phantom,pos=0.42] &
        H_{A_a}^{-p}(M; \omega_M\otimes L) \otimes
        H_{A_a^\prime}^{-q}(M; \omega_M\otimes L^\prime) \dar["\cap"] 
        \\
        H_{A_J\cap A_J^\prime}^{0}\bigl( M_{\tl{I}}; \omega_M\boxtimes \CC_{J} \bigr) 
        \dar["\int_t"] \rar["\phi^\ast(i_a)"] \ar[dr,"\bfB",phantom,pos=0.48] &
        H_{A_a\cap A_a^\prime}^0 (M; \omega_M)
        \dar["\int_{t_a}"] 
        \\
        H^0(\tl{I};\CC_{t(A_J\cap A_J^\prime)}) \rar["\phi^\ast(\iota_a)"'] & 
        H^0(\{a\}; \CC_{t_a(A_a\cap A_a^\prime)}),
    \end{tikzcd}
    \end{equation*}
    where $\phi^\ast(\iota_a)$ is a natural morphism induced by the unit $\id_{\tl{I}}\longrightarrow \iota_{a\ast}\iota_a^{-1}$.
    We shall omit to write ``$\rmR$'' for derived functors, for the sake of simplicity.
    The commutativity of the square $\bfA$ follows from Diagram 2.a below.
    The square $\bfB$ decomposes as in Diagram 2.b below.
    It follows from Lemma \ref{lem-intunit} that the square $(\#)$ in Diagram 2.b commutes.
    This completes the proof.
\end{proof}

\begin{center}
\begin{adjustbox}{rotate=90,center}
\begin{minipage}{1\textheight}
\hspace*{-1.5cm}
\begin{tikzcd}[nodes={font=\scriptsize},row sep=1cm,column sep=tiny]
        \sect\Bigl( M_{\tl{I}}; \sect_{A_J}\bigl( (\omega_M\otimes L)\boxtimes \CC_J \bigr) \Bigr) 
        \otimes 
        \sect\Bigl( M_{\tl{I}}; \sect_{A_J^\prime}\bigl( (\omega_M\otimes L^\prime)\boxtimes \CC_J  \bigr) \Bigr)
        \ar[r] \ar[d] &
        \sect\Bigl( M_{\tl{I}}; \sect_{A_J} i_{a\ast} i_a^{-1}\bigl( (\omega_M\otimes L)\boxtimes \CC_{J} \bigr) \Bigr) 
        \otimes 
        \sect\Bigl( M_{\tl{I}}; \sect_{A_J^\prime} i_{a\ast}i_a^{-1}\bigl( (\omega_M\otimes L^\prime)\boxtimes \CC_{J} \bigr) \Bigr) 
        \ar[r,dash,"\sim"] \ar[d] &
        \sect\bigl( M; \sect_{A_a}(\omega_M\otimes L) \bigr)
        \otimes
        \sect\bigl( M; \sect_{A_a^\prime}(\omega_M\otimes L^\prime) \bigr)
        \ar[d] \\
        \sect\Bigl( M_{\tl{I}}; \sect_{A_J} \bigl((\omega_M\otimes L)\boxtimes \CC_{J} \bigr) 
        \otimes \sect_{A_J^\prime} \bigl((\omega_M\otimes L^\prime)\boxtimes \CC_{J}\bigr)  \Bigr) 
        \ar[r] \ar[dd] &
        \sect\Bigl( M_{\tl{I}}; \sect_{A_J} i_{a\ast}i_a^{-1} \bigl((\omega_M\otimes L)\boxtimes \CC_{J}\bigr) 
        \otimes \sect_{A_J^\prime} i_{a\ast}i_a^{-1} \bigl((\omega_M\otimes L^\prime )\boxtimes \CC_{J} \bigr) \Bigr) 
        \ar[r,dash,"\sim"] \ar[d] &
        \sect\bigl(M; \sect_{A_a}(\omega_M\otimes L)\otimes \sect_{A_a^\prime}(\omega_M\otimes L^\prime) \bigr) 
        \ar[dd] \\
        & 
        \sect\Bigl( M_{\tl{I}}; \sect_{A_J\cap A_J^\prime} \bigl( i_{a\ast}i_a^{-1}( (\omega_M\otimes L)\boxtimes \CC_{J} )\otimes i_{a\ast}i_a^{-1}( (\omega_M\otimes L^\prime)\boxtimes \CC_{J} ) \bigr) \Bigr) 
        \ar[d,dash,"\sim" sloped] & \\
        \sect\Bigl( M_{\tl{I}}; \sect_{A_J\cap A_J^\prime}\bigl( (\omega_M\otimes \omega_M \otimes L\otimes L^\prime)\boxtimes \CC_{J} \bigr) \Bigr) 
        \ar[r] \ar[d,dash,"\sim" sloped] &
        \sect\Bigl( M_{\tl{I}}; \sect_{A_J\cap A_J^\prime} i_{a\ast}i_a^{-1} \bigl( (\omega_M\otimes \omega_M \otimes L\otimes L^\prime)\boxtimes \CC_{J} \bigr) \Bigr) 
        \ar[r,dash,"\sim"] \ar[d,dash,"\sim" sloped] &
        \sect\bigl(M; \sect_{A_a\cap A_a^\prime}(\omega_M\otimes \omega_M \otimes L\otimes L^\prime) \bigr) 
        \ar[d,dash,"\sim" sloped] \\
        \sect_{A_J\cap A_J^\prime}( M_{\tl{I}}; \omega_M\boxtimes \CC_{J} ) [n] 
        \ar[r] &
        \sect_{A_J\cap A_J^\prime}\bigl( M_{\tl{I}}; i_{a\ast}i_a^{-1}(\omega_M\boxtimes \CC_{J} ) \bigr)[n] 
        \ar[r,dash,"\sim"] &
        \sect_{A_a\cap A_a^\prime}(M; \omega_M ) [n] 
\end{tikzcd}
\hspace*{11cm}
\\
\hspace*{11cm} 
\textrm{Diagram 2.a}
\end{minipage}
\end{adjustbox}
\end{center}

\begin{center}
\begin{adjustbox}{center}
\begin{tikzcd}[nodes={font=\footnotesize},row sep=0.6cm]
    \sect\bigl( M_{\tl{I}}; \sect_{A_J\cap A_J^\prime} (\omega_M\boxtimes \CC_{J}) \bigr)
    \ar[r] &
    \sect\bigl( M_{\tl{I}}; \sect_{A_J\cap A_J^\prime} i_{a\ast}i_a^{-1}(\omega_M\boxtimes \CC_{J}) \bigr) 
    \ar[r,"\sim",dash] &
    \sect(M;\sect_{A_a\cap A_a^\prime} \omega_M ) \\
    \sect(\tl{I};  t_!\sect_{A_J\cap A_J^\prime} t^!\CC_J) 
    \ar[r] \ar[d,"\sim" sloped] \ar[u,"\sim" sloped] &
    \sect(\tl{I}; t_!\sect_{A_J\cap A_J^\prime} i_{a\ast}i_a^{-1} t^!\CC_{J}) 
    \ar[r,"\sim",dash] \ar[d,"\sim" sloped] \ar[u,"\sim" sloped] &
    \sect( \{a\}; t_{a!}\sect_{A_a\cap A_a^\prime}t_a^!\CC) 
    \ar[d,"\sim" sloped] \ar[u,"\sim" sloped] \\
    \sect\bigl( \tl{I}; (t_!\sect_{A_J\cap A_J^\prime}t^!\CC_{J})_{t(A_J\cap A_J^\prime)} \bigr) 
    \ar[r] \ar[d] &
    \sect\bigl( \tl{I};  (t_!\sect_{A_J\cap A_J^\prime}i_{a\ast}i_a^{-1} t^!\CC_{J})_{t(A_J\cap A_J^\prime)} \bigr) 
    \ar[r,"\sim",dash] \ar[d] &
    \sect(\{a\}; (t_{a!} \sect_{A_a\cap A_a^\prime} t_a^!\CC)_{t_a(A_a\cap A_a^\prime)} )
    \ar[d] \\
    \sect\bigl( \tl{I}; (t_!t^!\CC_{J})_{t(A_J\cap A_J^\prime)} \bigr) 
    \ar[r] \ar[d] \ar[rrd,"(\#)",phantom] &
    \sect\bigl( \tl{I}; (t_! i_{a\ast}i_a^{-1} t^!\CC_{J})_{t(A_J\cap A_J^\prime)} \bigr) 
    \ar[r,dash,"\sim" sloped] & 
    \sect(\{a\}; (t_{a!}t_a^!\CC)_{t_a(A_a\cap A_a^\prime)} )
    \ar[d] \\
    \sect(\tl{I}; \CC_{t(A_J\cap A_J^\prime)}) 
    \ar[rr] &
    &
    \sect(\{a\}; \CC_{t_a(A_a\cap A_a^\prime)} ). 
\end{tikzcd}
\end{adjustbox}
\\
\textrm{Diagram 2.b}
\end{center}

\begin{example}\label{ex-Lagintersect}
    Let us apply Proposition \ref{prop-intersection} to the case of Lagrangian cycles.
    Let $X$ be a real analytic manifold of dimension $n$.
    For $a\in \tl{I}=(-b,b)$ we denote by $i_a\colon T^\ast X\simeq T^\ast X\times \{a\}\longhookrightarrow T^\ast X\times \tl{I}$ the closed embedding.
    Let $\Lambda_I,\Lambda_I^\prime\subset T^\ast X\times I$ and $\Lambda_J,\Lambda_J^\prime\subset T^\ast X\times J$ be closed subanalytic subsets of dimension $\leq n+1$ such that $\overline{\Lambda_I}=\Lambda_J$ and $\overline{\Lambda_I^\prime}=\Lambda_J^\prime$. 
    We assume that for each $a\in J$ the closed subsets 
    \begin{equation}
    \Lambda_{a} \coloneq i_a^{-1}(\Lambda_J),
    \quad 
    \Lambda_{a}^\prime \coloneq i_a^{-1}(\Lambda_J^\prime)
    \end{equation}
    of $T^\ast X$ are isotropic. 
    Let us consider families of Lagrangian cycles 
    $C_I\in H_{\Lambda_I}^{-n}\bigl( T^\ast X\times I; (\omega_{T^\ast X}\otimes \orsh_{T^\ast X/X})\boxtimes\CC_{I} \bigr)$ 
    ($\simeq H_{n+1}^\BM(\Lambda_I; \orsh_{T^\ast X/X}\boxtimes\CC_{I})$)
    and 
    $C_I^\prime\in H_{\Lambda_I^\prime}^{-n}\bigl( T^\ast X\times I; (\omega_{T^\ast X}\otimes \orsh_{T^\ast X/X})\boxtimes\CC_{I} \bigr)$. 
    As mentioned in Section \ref{subsec-limit}, we can define a Lagrangian cycle
    \begin{equation}
        \lim_{t\to+0}C_t\coloneq \lim_{t\to+0}(C_I)\in 
        H_{\Lambda_0}^{-n}( T^\ast X; \omega_{T^\ast X}\otimes \orsh_{T^\ast X/X} )
        \quad ( \simeq H_n^\BM(\Lambda_{0};\orsh_{T^\ast X/X}) )
    \end{equation}
    and there exists a subanalytic cycle 
    \begin{align}
        C_J &\in H_{\Lambda_J}^{-n}(T^\ast X\times J; (\omega_{T^\ast X}\otimes \orsh_{T^\ast X/X})\boxtimes\CC_{J}) \\
        & \quad \simeq H_{\Lambda_J}^{-n}(T^\ast X\times\tl{I}; (\omega_{T^\ast X}\otimes \orsh_{T^\ast X/X})\boxtimes\CC_{J})
    \end{align}
    such that
    $\phi^\ast(i_0) (C_J)=\displaystyle\lim_{t\to+0}C_t$. 
    Similarly, for $C_I^\prime$ we define 
    \begin{align}
    \displaystyle\lim_{t\to+0}C_t^\prime &\in H_{\Lambda_0^\prime}^{-n}( T^\ast X; \omega_{T^\ast X}\otimes \orsh_{T^\ast X/X} ), \\
    C_J^\prime  &\in H_{\Lambda_J^\prime}^{-n}(T^\ast X\times\tl{I}; (\omega_{T^\ast X}\otimes \orsh_{T^\ast X/X})\boxtimes\CC_{J}).
    \end{align}
    We assume that the projection $t\colon T^\ast X\times \tl{I}\longrightarrow \tl{I}$ is proper on $\Lambda_J\cap \Lambda_J^\prime$ and $ t(\Lambda_J\cap\Lambda_J^\prime)=J$.
    Then we obtain a locally constant function $\#(C_J\cap C_J^\prime)$ on $J$.
    By Proposition \ref{prop-intersection}, for any $a\in J$ we have
    \begin{align}
        \#((\lim_{t\to+0}C_t)\cap (\lim_{t\to+0}C_t^\prime))
        &=\#(C_J\cap C_J^\prime)(0) \\
        &=\#(C_J\cap C_J^\prime)(a) \\
        &=\#(C_{a}\cap C_a^\prime),
    \end{align}
    where $C_{a}\coloneq \phi^\ast(i_a) (C_J) \in 
    H_{\Lambda_a}^{-n}( T^\ast X; \omega_{T^\ast X}\otimes \orsh_{T^\ast X/X} )$ 
    and $C_{a}^\prime\coloneq \phi^\ast(i_a) (C_J^\prime)\in 
    H_{\Lambda_a^\prime}^{-n}( T^\ast X;$ $\omega_{T^\ast X}\otimes \orsh_{T^\ast X/X} )$.
\end{example}

\subsection{Specialization and microlocalization functors}
In this subsection, we recall the definitions of the specialization and 
microlocalization functors and prove some elementary properties on them.
Let $X$ be a real analytic manifold and $M\subset X$ its closed submanifold.
Then we can define a new manifold $\widetilde{X}_M$ called the normal deformation 
of $X$ along $M$ and two morphisms:

\begin{equation}
\begin{cases}
   p\colon \widetilde{X}_M \longrightarrow X, \\
   t\colon \widetilde{X}_M \longrightarrow \RR,
\end{cases} 
\end{equation}
such that 

\begin{equation}
   \begin{cases}
      p^{-1} \( X\setminus M\) \simto (X\setminus M)\times (\RR \setminus \{ 0\}), \\
      t^{-1} \( \RR \setminus \{ 0\}\) \simto X\times (\RR \setminus \{ 0\}), \\
      t^{-1} (0) \simto T_M X .
   \end{cases}
\end{equation}
For the details, see Kashiwara-Schapira \cite[Section 4.1]{KS90}.
In this situation, we have the following commutative diagram:

\begin{equation}
   \xymatrix{
      T_M X \ar@{^{(}->}[r]^s \ar[d]^{\tau} & \widetilde{X}_M \ar[d]^p & \Omega \ar@{_{(}->}[l]_j \ar[ld]^{\widetilde{p}}\\
      M \ar@{^{(}->}[r]^i & X, & {}
   }
\end{equation}
where we set $\Omega \coloneq t^{-1}(\RR_{>0})\subset \widetilde{X}_M$ . Then for
$F\in \mathbf{D}^{\mathrm{b}} (X)$, we define the specialization (resp. microlocalization)
of $F$ along $M$ by:
$$
\nu_M (F) \coloneq s^{-1} \mathrm{R} j_\ast \widetilde{p}^{-1} F \quad 
\text{(resp.} \medspace 
\mu_M (F) \coloneq (\nu_M (F))^{\wedge}
\text{)},
$$
where the functor 
$(\cdot)^{\wedge} \colon \mathbf{D}^{\mathrm{b}}(T_M X) \longrightarrow \mathbf{D}^{\mathrm{b}}(T_M^\ast X)$
is the Fourier-Sato transform.
Let $f\colon Y \longrightarrow X$ be a morphism of manifolds and $M$ (resp. $N$)
a closed submanifold of $X$ (resp. $Y$).
Assume that $f(N) \subset M$, and consider the morphism 
$$
T_N f \colon T_N Y \longrightarrow N\times_M T_M X \longrightarrow T_M X 
$$
induced by $f$. Then we have 
canonical morphisms (see \cite[Proposition 4.2.5, 4.3.5]{KS90}):
\begin{align}
   \alpha_f & \colon \( T_N f \)^{-1} \nu_M (F) \longrightarrow \nu_N (f^{-1} F) \\
   \beta_f & \colon \mathrm{R}{\rho_f}_! \left( \omega_{N/M} \otimes \varpi_f^{-1} \mu_M (F)\right) \longrightarrow
   \mu_N (\omega_{Y/X} \otimes f^{-1}F),
\end{align}
where the morphisms $\rho_f$ and $\varpi_f$ are induced by $f\colon Y \longrightarrow X$, 
as illustrated in the diagram below:
\begin{equation}
   \xymatrix{
      T_N^\ast Y & N\times_M T_M^\ast X \ar[l]_-{\rho_f} \ar[r]^-{\varpi_f} & T_M^\ast X .
   }
\end{equation}

\begin{proposition}\label{prop-special}
Let $f\colon Y \longrightarrow X$ and $g\colon Z \longrightarrow Y$ be morphisms
of manifolds and M (resp. $N, L$) a closed submanifold of $X$ (resp. $Y, Z$).
Assume $f(N) \subset M$ and $g(L)\subset N$. Set $h \coloneq f\circ g$.
Then the following diagram commutes:
\begin{equation}\label{eq-sp}
   \xymatrix{
      (T_L g)^{-1} (T_N f)^{-1} \nu_M(F) \ar[r]^{\alpha_f} \ar[d]_{\wr} & (T_L g)^{-1} \nu_N (f^{-1}F) \ar[d]^{\alpha_g} \\
      (T_L h)^{-1}\nu_M(F) \ar[r]^{\alpha_h}& \nu_L(h^{-1}F).
   }
\end{equation}
\end{proposition}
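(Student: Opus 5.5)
We prove the commutativity of \eqref{eq-sp} by unwinding the construction of $\alpha_f$ in \cite[Proposition 4.2.5]{KS90} in terms of normal deformations, and reducing everything to the base change and composition isomorphisms of the six operations. The key geometric input is functoriality of the normal deformation. Since $f(N)\subset M$, the map $f$ induces a morphism $\tl{f}\colon \tl{Y}_N\longrightarrow \tl{X}_M$. It commutes with $p$ and $t$, maps $\Omega_Y$ into $\Omega_X$, and restricts on $t^{-1}(0)$ to $T_Nf$. Similarly $g$ induces $\tl{g}\colon \tl{Z}_L\longrightarrow \tl{Y}_N$ and $h$ induces $\tl{h}\colon \tl{Z}_L\longrightarrow\tl{X}_M$. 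By uniqueness of the induced map, checked on the dense open sets $t\neq 0$ where everything is $f\times \id$ etc., we get $\tl{h}=\tl{f}\circ\tl{g}$. The same identity holds for the restrictions $\tl{f}_\Omega$, $\tl{g}_\Omega$, $\tl{h}_\Omega$ to the $\Omega$'s and for the maps on the zero fibers.

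Next we recall that $\alpha_f$ is the composition
\begin{align}
(T_Nf)^{-1}s_X^{-1}\rmR j_{X\ast}\tl{p}_X^{-1}F
&\simeq s_Y^{-1}\tl{f}^{-1}\rmR j_{X\ast}\tl{p}_X^{-1}F
\longrightarrow s_Y^{-1}\rmR j_{Y\ast}\tl{f}_\Omega^{-1}\tl{p}_X^{-1}F \\
&\simeq s_Y^{-1}\rmR j_{Y\ast}\tl{p}_Y^{-1}f^{-1}F .
\end{align}
Here the middle arrow is the base change morphism $\tl{f}^{-1}\rmR j_{X\ast}\longrightarrow \rmR j_{Y\ast}\tl{f}_\Omega^{-1}$, obtained by adjunction from $\tl{f}^{-1}$ applied to the unit. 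The outer isomorphisms are the canonical composition isomorphisms $(\cdot\circ\cdot)^{-1}\simeq (\cdot)^{-1}(\cdot)^{-1}$ for the commutative squares $\tl{f}\circ s_Y=s_X\circ T_Nf$ and $\tl{p}_X\circ\tl{f}_\Omega=f\circ\tl{p}_Y$.

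Writing $\alpha_g\circ(T_Lg)^{-1}\alpha_f$ and $\alpha_h$ in this form, the claim splits into three compatibilities.
\begin{enumerate}[label=(\roman*)]
\item The composition isomorphisms for inverse images are associative, i.e.\ they form a pseudofunctor. This handles the outer isomorphisms on both the $s$-side and the $\tl{p}$-side.
\item The base change morphisms compose correctly. The morphism $\tl{h}^{-1}\rmR j_{X\ast}\longrightarrow\rmR j_{Z\ast}\tl{h}_\Omega^{-1}$ equals the composite of $\tl{g}^{-1}$ applied to the base change for $\tl{f}$, followed by the base change for $\tl{g}$ applied to $\tl{f}_\Omega^{-1}$, modulo the identifications $\tl{h}^{-1}\simeq\tl{g}^{-1}\tl{f}^{-1}$ and $\tl{h}_\Omega^{-1}\simeq\tl{g}_\Omega^{-1}\tl{f}_\Omega^{-1}$.
\item The isomorphisms from (i) are natural with respect to the base change morphisms in (ii), so the squares mixing the two commute.
\end{enumerate}
Points (i) and (iii) are formal naturality statements. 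Once the diagram is drawn, they follow from the naturality of the composition isomorphisms.

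The main obstacle is (ii). It is the standard "pasting" property of mates (Beck–Chevalley transformations) for horizontally composed commutative squares
\begin{equation}
\Omega_Z\to\Omega_Y\to\Omega_X \quad \text{over} \quad \tl{Z}_L\to\tl{Y}_N\to\tl{X}_M .
\end{equation}
We would verify it by writing each base change morphism as unit followed by counit, namely
\begin{equation}
\tl{f}^{-1}\rmR j_{X\ast}\to \rmR j_{Y\ast}j_Y^{-1}\tl{f}^{-1}\rmR j_{X\ast}\simeq \rmR j_{Y\ast}\tl{f}_\Omega^{-1}j_X^{-1}\rmR j_{X\ast}\to\rmR j_{Y\ast}\tl{f}_\Omega^{-1},
\end{equation}
and then applying the triangle identities of the adjunctions $(j^{-1},\rmR j_\ast)$. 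After this, commutativity of \eqref{eq-sp} follows by pasting the resulting squares.
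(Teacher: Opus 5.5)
Your proposal is correct and follows essentially the paper's route. The paper also reduces, via the construction of $\alpha_f$ in \cite[Proposition 4.2.5]{KS90} through the induced maps of normal deformations, to the compatibility of the base change morphisms $\tl{f}^{-1}\rmR j_{X\ast}\to\rmR j_{Y\ast}\tl{f}^{-1}$ under horizontal composition of the squares $\Omega_Z\to\Omega_Y\to\Omega_X$ over $\tl{Z}_L\to\tl{Y}_N\to\tl{X}_M$. It proves that compatibility in Lemma \ref{lem-bs} exactly as you propose, by writing each base change as a unit followed by a counit and applying the triangle identity; your points (i) and (iii) are the formal bookkeeping the paper leaves implicit in ``by the proof of \cite[Proposition 4.2.5]{KS90}''.
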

\begin{proof}
By the morphism $t_X\colon \widetilde{X_M} \longrightarrow \RR$ (resp. $t_Y\colon \widetilde{Y_N} \longrightarrow \RR$, $t_Z\colon \widetilde{Z_L} \longrightarrow \RR$)
we set $\Omega_X \coloneq t_X^{-1}(\RR_{>0})\subset \widetilde{X_M}$ (resp. $\Omega_Y \coloneq t_Y^{-1}(\RR_{>0})\subset \widetilde{Y_N}, \: \Omega_Z \coloneq t_Z^{-1} (\RR_{>0}) \subset \widetilde{Z_L}$) and denote the inclusion map $\Omega_X \longhookrightarrow \widetilde{X_M}$ (resp. $\Omega_Y \longhookrightarrow \widetilde{Y_N}$, $\Omega_Z \longhookrightarrow \widetilde{Z_L}$) by $j_X$ (resp. $j_Y$, $j_Z$).
Then we obtain the commutative diagram
\begin{equation}
    \xymatrix{
    \widetilde{Z_L} \ar[d]^{\widetilde{g^\prime}} \ar@{}[rd]|\Box & \Omega_Z \ar@{_{(}->}[l]_-{j_Z} \ar[d]_-{\widetilde{g}} \\
    \widetilde{Y_N} \ar[d]^{\widetilde{f^\prime}} \ar@{}[rd]|\Box & \Omega_Y \ar@{_{(}->}[l]_-{j_Y} \ar[d]_-{\widetilde{f}} \\
    \widetilde{X_N}  & \Omega_X, \ar@{_{(}->}[l]_-{j_X} 
    }
\end{equation}
where $\widetilde{f^\prime}$ (resp. $\widetilde{g^\prime}$) is induced by $f$ (resp. $g$) and 
$\widetilde{f}$ (resp. $\widetilde{g}$) is the restriction of $\widetilde{f^\prime}$ (resp. $\widetilde{g^\prime}$ ) to $\Omega_Y$ (resp. $\Omega_Z$).
By the proof of \cite[Proposition 4.2.5]{KS90}, we have only to show that the following diagram commutes:
\begin{equation}
    \xymatrix{
    \widetilde{g^\prime}^{-1} \widetilde{f^\prime}^{-1} \mathrm{R} {j_X} _\ast \ar[r] \ar@{=}[d] &
    \widetilde{g^\prime}^{-1} \mathrm{R} {j_Y}_\ast \widetilde{f}^{-1} \ar[d] \\
    \widetilde{g^\prime}^{-1} \widetilde{f^\prime}^{-1} \mathrm{R} {j_X} _\ast \ar[r] &
    \mathrm{R} {j_Z}_\ast \widetilde{g}^{-1} \widetilde{f}^{-1}.
    }
\end{equation}
But this follows from Lemma \ref{lem-bs}.
\end{proof}

By applying the Fourier-Sato transform to the diagram (\ref{eq-sp}), 
we also obtain the following result.

\begin{proposition}\label{prop-micro}
In the situation of Proposition \textup{\ref{prop-special}},
the following diagram commutes:

\begin{equation}
   \xymatrix{
      \mathrm{R}{\rho_g}_! \( \omega_{L/N} \otimes \varpi_g^{-1} 
      \(\mathrm{R}{\rho_f}_!(\omega_{N/M}\otimes \varpi_f^{-1}\mu_M(F))\)\) \ar[r]^-{\beta_f} \ar@{}[d]|{\rotatebox{90}{$\simeq$}} &
      \mathrm{R}{\rho_g}_! \( \omega_{L/N} \otimes \varpi_g^{-1} \mu_N (\omega_{Y/X} \otimes f^{-1} F)\) \ar[d]^{\beta_g} \\
      \mathrm{R}{\rho_h}_! \( \omega_{L/M} \otimes \varpi_h^{-1} \mu_M(F)\) \ar[r]^{\beta_h} & 
      \mu_L \( \omega_{Z/X} \otimes h^{-1} F\).
   }
\end{equation}
\end{proposition}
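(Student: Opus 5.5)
The plan is to deduce this from Proposition \ref{prop-special} by applying the Fourier-Sato transform and checking that every ingredient of $\beta_f$, $\beta_g$, $\beta_h$ is compatible with composition. Recall from the proof of \cite[Proposition 4.3.5]{KS90} how $\beta_f$ is built from $\alpha_f$. One factors $T_Nf$ as
\begin{equation}
T_NY \overset{{}^t f^\prime}{\longrightarrow} N\times_M T_MX \overset{f_\tau}{\longrightarrow} T_MX.
\end{equation}
Then $\alpha_f$ gives a morphism $(^tf^\prime)^{-1} f_\tau^{-1}\nu_M(F)\to\nu_N(f^{-1}F)$. We apply the Fourier-Sato transform to it and use two standard isomorphisms: the commutation of Fourier-Sato with base change, $(f_\tau^{-1}G)^\wedge\simeq\varpi_f^{-1}G^\wedge$, and the commutation of Fourier-Sato with linear maps of vector bundles, $((^tf^\prime)^{-1}H)^\wedge\simeq \rmR\rho_{f!}(\omega_{N/M}\otimes H^\wedge)$ twisted appropriately (\cite[Proposition 3.7.14, 3.7.13]{KS90}). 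Finally one tensors with $\omega_{Y/X}$, using the identification $\mu_N(\omega_{Y/X}\otimes f^{-1}F)\simeq\omega_{Y/X}\otimes\mu_N(f^{-1}F)$ up to the shift/orientation conventions there. So $\beta_f$ is the composite of $(\alpha_f)^\wedge$ with these canonical isomorphisms, and similarly for $g$ and $h$.

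The key steps, in order, are as follows.

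First, apply $(\cdot)^\wedge$ on $T_L^\ast Z$ to the commutative square \eqref{eq-sp}. This gives a commutative square whose entries are $((T_Lg)^{-1}(T_Nf)^{-1}\nu_M(F))^\wedge$ and so on.

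Second, identify each corner with the corresponding corner of the diagram in the statement, using the base-change and linear-map compatibilities of the Fourier-Sato transform. The essential point here is a lemma of the following form. For composable morphisms of vector bundles $E_1\to E_2\to E_3$ over compatible bases, the isomorphism
\begin{equation}
((u\circ v)^{-1}G)^\wedge\simeq \rmR\rho_{u\circ v\,!}(\cdots)
\end{equation}
coincides with the composite of the isomorphisms for $u$ and for $v$. We also need the relative orientation sheaves to compose compatibly: $\omega_{L/N}\otimes\omega_{N/M}\simeq\omega_{L/M}$ and $\omega_{Z/Y}\otimes g^{-1}\omega_{Y/X}\simeq\omega_{Z/X}$. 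Both follow from the transitivity of the functorial isomorphisms $(u v)^! \simeq v^! u^!$, $(uv)_!\simeq u_! v_!$ and from the naturality of the Fourier-Sato transform. Concretely, the Fourier-Sato transform is expressed by kernels $\rmR p_{2!}p_1^{-1}(\cdot)_{P'}$, and the compatibility reduces to base-change diagrams, which commute by Lemma \ref{lem-bs}-type arguments.

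Third, check the remaining pieces. The left vertical isomorphism of the statement must be the transform of the left vertical isomorphism $(T_Lg)^{-1}(T_Nf)^{-1}\simeq(T_Lh)^{-1}$ in \eqref{eq-sp}; this uses the decomposition $T_Lh=T_Nf\circ T_Lg$ and the Cartesian square relating $L\times_N(N\times_M T_M^\ast X)$ to $L\times_M T^\ast_MX$. The twists $\omega_{Y/X}$ and $g^{-1}\omega_{Y/X}$ pass through $\mu_N$ and $\beta_g$ by naturality, since they are locally constant.

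The main obstacle is the second step: verifying that the canonical isomorphisms for the Fourier-Sato transform under inverse images and proper direct images by linear maps are transitive under composition, including the orientation twists. I would try first to reduce it to the adjunction description. Every such isomorphism is obtained from base change $\rmR p_!\,q^{-1}\simeq q'^{-1}\rmR p'_!$ and projection formulas, each of which is known to be compatible with composition of Cartesian squares. The required diagram is then a pasting of commuting base-change squares.
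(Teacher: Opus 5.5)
Your proposal is correct and follows the paper's argument: the paper's entire proof is the one-line remark that the result follows by applying the Fourier--Sato transform to the commutative square \eqref{eq-sp}, and your second and third steps just spell out the coherence checks the paper leaves implicit (transitivity of the Fourier--Sato isomorphisms for linear maps and base change, and compatibility with $\omega_{L/N}\otimes\omega_{N/M}\simeq\omega_{L/M}$ and $\omega_{Z/Y}\otimes g^{-1}\omega_{Y/X}\simeq\omega_{Z/X}$). One notational slip: in \cite{KS90} the linear map $T_NY\to N\times_M T_MX$ is $f'$, while ${}^tf'$ denotes its transpose $N\times_M T^\ast_MX\to T^\ast_NY$, i.e.\ your $\rho_f$.
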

Let $p_1$ and $p_2$ be the first and the second projections of $X \times X$ to $X$ respectively.
Recall also that for $F,G\in \mathbf{D}^{\mathrm{b}} (X)$,  $\mu hom (G,F) \in \mathbf{D}^{\mathrm{b}} (T^\ast X)$  
is defined by
\begin{equation}
   \mu hom(G, F) \coloneq \mu_{\Delta_X} \( \rhom (p_2^{-1}G, p_1^!F)\),
\end{equation}
where $\Delta_X \subset X\times X$ is the diagonal of $X\times X$ and we used the natural
identification $T_{\Delta_X}^\ast (X\times X) \simeq T^\ast X$.
Let $\pi_X\colon T_{\Delta_X} ^\ast (X\times X) \simeq T^\ast X \longrightarrow \Delta_X 
(\simeq X)$ be the projection.
Then by \cite[Proposition 4.4.2 (i)]{KS90} there exists an isomorphism
\begin{equation}
   \mathrm{R} {\pi_X}_\ast \mu hom (G, F) \simeq \rhom (G, F).
\end{equation}

\section{Definition of relative characteristic cycles}\label{sec-relCC}
In this section, we define relative characteristic cycles of $\RR$-constructible
sheaves by using Ishimura's functor of relative $\muhom$ introduced in \cite{Ish92}.
Let $f\colon X\longtwoheadrightarrow\Sigma$ be a smooth morphism of $C^\infty$-manifolds 
and define the relative cotangent bundle $T^\ast(X/\Sigma)$ of $f$ by the exact sequence
\begin{equation}
0\longrightarrow X\ftimes{\Sigma} T^\ast\Sigma \underset{\alpha}{\longrightarrow}
T^\ast X \underset{\beta}{\longrightarrow} T^\ast(X/\Sigma)\longrightarrow 0
\end{equation}
of vector bundles on $X$.
Let 
\begin{equation}
\Delta_X(\simeq X) \underset{\gamma_f}{\longhookrightarrow} X\ftimes{\Sigma}X
\underset{\delta_f}{\longhookrightarrow} X\times X
\end{equation}
be the inclusion maps and $\delta_X\colon \Delta_X(\simeq X)
\longhookrightarrow X\times X$ (resp. $\delta_\Sigma\colon
\Delta_\Sigma(\simeq\Sigma)\longhookrightarrow\Sigma\times\Sigma$)
the diagonal embedding of $X$ (resp. $\Sigma$).
Then we have $\delta_X=\delta_f\circ\gamma_f$ and for the submanifold 
$\Delta_f\coloneq\gamma_f(\Delta_X)(\simeq X)$ of $X\ftimes{\Sigma}X$ there exists an isomorphism
\begin{equation}
T^\ast(X/\Sigma)\simeq T_{\Delta_f}^\ast(X\ftimes{\Sigma}X).
\end{equation}
Moreover the surjective morphism $\beta\colon T^\ast X
\longtwoheadrightarrow T^\ast (X/\Sigma)$ is naturally identified with the one 
\begin{equation}
T_{\Delta_X}^\ast(X\times X)\longtwoheadrightarrow T_{\Delta_f}^\ast (X\ftimes{\Sigma} X)
\end{equation}
induced from $\delta_f\colon X\ftimes{\Sigma} X
\longhookrightarrow X\times X$ (see Ishimura \cite[Section 1.2]{Ish92}).
Note also that for any point $a\in \Sigma$ of $\Sigma$ and the submanifold 
$f^{-1}(a)\subset X$ of $X$ associated to it we have a natural isomorphism
\begin{equation}
f^{-1}(a)\ftimes{X} T^\ast(X/\Sigma)\simto T^\ast (f^{-1}(a)).
\end{equation}
Let $k$ be a commutative field of characteristic $0$ and denote by $\BDC(X)$ the 
derived category of bounded complexes of sheaves of $k_X$-modules on $X$.
Then by using the conicness of the microsupport $\msupp(F)
\subset T^\ast X$ of $F\in \BDC(X)$ we can easily show the following lemma.

\begin{lemma}\label{lem-adopted}
By the morphism $\alpha\colon X\ftimes{\Sigma}T^\ast\Sigma
\longhookrightarrow T^\ast X$ we identify $X\ftimes{\Sigma} 
T^\ast\Sigma$ with a subbundle of $T^\ast X$.
Then for $F\in\BDC(X)$ the following three conditions are equivalent:
\begin{enumerate}
    \item [\rm (i)]
    The restriction $\beta\vbar_{\msupp(F)}\colon\msupp(F)\longrightarrow T^\ast(X/\Sigma)$ of 
    $\beta\colon T^\ast X\longtwoheadrightarrow T^\ast(X/\Sigma)$ 
to $\msupp(F)\subset T^\ast X$ is proper,
    \item [\rm (ii)]
    $\msupp(F)\cap (X\ftimes{\Sigma}T^\ast\Sigma)\subset T_X^\ast X$,
    \item [\rm (iii)]
    For any point $a\in \Sigma$ of $\Sigma$ the inclusion map 
$f^{-1}(a)\longhookrightarrow X$ of the submanifold $f^{-1}(a)\subset X$ 
is non-characteristic for $F\in \BDC(X)$. 
\end{enumerate}
\end{lemma}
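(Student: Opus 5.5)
We prove (i)$\Leftrightarrow$(ii) using only the conicness and closedness of $\msupp(F)$, and (ii)$\Leftrightarrow$(iii) by unwinding the definition of non-characteristic morphisms. Write $\Lambda\coloneq\msupp(F)$, a closed conic subset of $T^\ast X$. Since $\beta$ is a morphism of vector bundles over $X$ with kernel $X\ftimes{\Sigma}T^\ast\Sigma$, we have
\begin{equation}
\Lambda\cap(X\ftimes{\Sigma}T^\ast\Sigma)=\Lambda\cap\beta^{-1}(T_X^\ast X)=\bigl(\beta\vbar_\Lambda\bigr)^{-1}(\text{zero section}).
\end{equation}

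For (i)$\Rightarrow$(ii), suppose $p=(x;\xi)\in\Lambda$ with $\xi\neq0$ and $\beta(p)=0$. Then the whole ray $\{(x;c\xi)\}_{c>0}$ lies in $\Lambda$, by conicness, and in the fiber of $\beta$ over the point $(x;0)$. This ray is closed in $\Lambda\cap\pi_X^{-1}(x)$ only after adding the origin, and it is non-compact. Hence $(\beta\vbar_\Lambda)^{-1}(\{(x;0)\})$ is non-compact, contradicting properness.

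For (ii)$\Rightarrow$(i), it suffices to show that for any compact $K\subset T^\ast(X/\Sigma)$ the closed set $\Lambda\cap\beta^{-1}(K)$ is compact. Its projection to $X$ lies in the compact set $\pi(K)$. Fix a metric on the fibers of $T^\ast X$ and suppose $p_k=(x_k;\xi_k)\in\Lambda\cap\beta^{-1}(K)$ with $|\xi_k|\to\infty$. Passing to a subsequence, $x_k\to x$ and $\xi_k/|\xi_k|\to\eta$ with $|\eta|=1$. By conicness, $(x_k;\xi_k/|\xi_k|)\in\Lambda$, so closedness gives $(x;\eta)\in\Lambda$. On the other hand, $\beta(x_k;\xi_k/|\xi_k|)=\beta(p_k)/|\xi_k|\to0$, because $\beta(p_k)$ stays in $K$ and is therefore bounded. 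Thus $(x;\eta)\in\Lambda\cap(X\ftimes{\Sigma}T^\ast\Sigma)$ with $\eta\neq0$, contradicting (ii). So the fibers of $\Lambda\cap\beta^{-1}(K)$ are bounded, and the set is compact. This compactness argument is the only step requiring care: one must use boundedness of $K$ together with the limit $(x;\eta)$, whose existence depends on conicness and closedness.

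For (ii)$\Leftrightarrow$(iii), let $i_a\colon f^{-1}(a)\hookrightarrow X$. By definition (\cite[Definition 5.4.12]{KS90}), $i_a$ is non-characteristic for $F$ iff $\Lambda\cap T^\ast_{f^{-1}(a)}X\subset T_X^\ast X$ over $f^{-1}(a)$. Since $f$ is a submersion, for $x\in f^{-1}(a)$ the conormal space $(T^\ast_{f^{-1}(a)}X)_x$ equals the image under $\alpha$ of $(T^\ast\Sigma)_a$, i.e.\ ${}^tf'(x)(T_a^\ast\Sigma)$. Hence
\begin{equation}
\bigcup_{a\in\Sigma}T^\ast_{f^{-1}(a)}X=X\ftimes{\Sigma}T^\ast\Sigma,
\end{equation}
and (iii) holding for all $a$ is exactly (ii).
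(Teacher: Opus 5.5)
Your proof is correct and takes essentially the approach the paper intends. The paper writes out no proof beyond remarking that the lemma follows easily from the conicness of $\msupp(F)$. Your argument supplies those details: conicness plus closedness, via the normalized-sequence compactness step, for (i)$\Leftrightarrow$(ii), and the identification $\bigcup_{a\in\Sigma}T^\ast_{f^{-1}(a)}X=X\ftimes{\Sigma}T^\ast\Sigma$ for (ii)$\Leftrightarrow$(iii).
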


\begin{definition}\label{def-adapted}
We say that $F\in\BDC(X)$ is adapted to $f\colon X\longrightarrow\Sigma$ if it 
satisfies the conditions of Lemma \ref{lem-adopted}.
\end{definition}

\begin{remark}
Let $f\colon X\longrightarrow\Sigma$ be a smooth morphism of
real analytic manifolds and $F\in\BDrc(X)$ an
$\RR$-constructible sheaf on $X$. Assume that $f$ is
proper on the support of $F$. Then by the results in
\cite[page 43]{GM88} there exists a subanalytic open
dense subset $\Omega \subset \Sigma$ of $\Sigma$
such that $F|_{f^{-1}( \Omega )} \in\BDrc( f^{-1}( \Omega ) )$ is
adapted to the restriction $f|_{f^{-1}( \Omega )}:
f^{-1}( \Omega ) \longrightarrow \Omega$ of $f$.
\end{remark}

\begin{definition}[c.f. {\cite[D\'efinition 1.1]{Ish92}}]
If $F\in \BDC(X)$ is adapted to $f\colon X\longrightarrow\Sigma$, we set
\begin{equation}
\msupp_\Sigma(F)\coloneq \beta(\msupp(F)) \quad \subset T^\ast(X/\Sigma)
\end{equation}
and call it the relative microsupport of $F$.
\end{definition}

In what follows, we assume that $X$, $\Sigma$ and $f\colon X\longrightarrow\Sigma$ are
``real analytic'' and consider only $\RR$-constructible sheaves $F\in\BDrc(X)$ on $X$. 
Then by Lemma \ref{lem-adopted} we can easily show the following result.

\begin{lemma}
Assume that $F\in\BDrc(X)$ is adapted to $f\colon X\longrightarrow\Sigma$.
Then for any point $a\in\Sigma$ of $\Sigma$
\begin{equation}
\msupp_\Sigma(F)\cap \bigl\{f^{-1}(a)\ftimes{X}T^\ast (X/\Sigma)\bigr\}
\end{equation}
is a closed conic subanalytic isotropic subset of
\begin{equation}\label{fib}
f^{-1}(a)\ftimes{X}T^\ast (X/\Sigma)\simto T^\ast(f^{-1}(a)).
\end{equation}
\end{lemma}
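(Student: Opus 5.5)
The natural route is to identify the set in question with the microsupport of the restriction $F_a \coloneq F|_{f^{-1}(a)}$, or at least to bound it by that microsupport, and then apply the involutivity and isotropy theory of \cite{KS90} to $F_a$. Set $Y \coloneq f^{-1}(a)$ with inclusion $i\colon Y\longhookrightarrow X$, and write
\begin{equation}
T^\ast Y \overset{\rho_i}{\longleftarrow} Y\ftimes{X} T^\ast X \overset{\varpi_i}{\longrightarrow} T^\ast X
\end{equation}
for the associated maps. Restricting the exact sequence defining $T^\ast(X/\Sigma)$ to $Y$, and using $T_Y^\ast X = Y\ftimes{\Sigma}T_a^\ast\Sigma = Y\ftimes{\Sigma}T^\ast\Sigma$ along $Y$, we see that under the isomorphism \eqref{fib} the restriction of $\beta$ to $Y\ftimes{X}T^\ast X$ is exactly $\rho_i$. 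Hence
\begin{equation}
\msupp_\Sigma(F)\cap \bigl\{Y\ftimes{X}T^\ast(X/\Sigma)\bigr\} = \rho_i\bigl(\varpi_i^{-1}(\msupp(F))\bigr).
\end{equation}

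Conicness follows at once, since $\msupp(F)$ is conic and $\rho_i$ is linear on fibers. Closedness follows from condition (i) of Lemma \ref{lem-adopted}. Indeed, $\beta|_{\msupp(F)}$ is proper, so $\msupp_\Sigma(F)=\beta(\msupp(F))$ is closed in $T^\ast(X/\Sigma)$. Intersecting with the closed subset $Y\ftimes{X}T^\ast(X/\Sigma)$ keeps it closed. Equivalently, by (iii), $i$ is non-characteristic for $F$, so $\rho_i$ is proper on $\varpi_i^{-1}(\msupp(F))$. Subanalyticity uses the $\RR$-constructibility of $F$: $\msupp(F)$ is a closed conic subanalytic Lagrangian subset (\cite[Theorem 8.4.2]{KS90}). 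The set above is then the image of the closed subanalytic set $\varpi_i^{-1}(\msupp(F))$ under a real analytic map that is proper on it, hence subanalytic.

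Isotropy is the main point, and there are two ways to get it. The first uses the fact recalled before Proposition \ref{prop-Lag_push} (Goresky--MacPherson and \cite[Proposition A.54]{Kas03}). If $\Lambda\subset T^\ast X$ is a closed subanalytic isotropic subset and $\rho_i$ is proper on $\varpi_i^{-1}(\Lambda)$, then $\rho_i\varpi_i^{-1}(\Lambda)$ is closed subanalytic isotropic in $T^\ast Y$. Applying this with $\Lambda=\msupp(F)$, where properness comes from (iii), gives the claim directly.

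The second, which I would mention as a cross-check, is sheaf-theoretic. Since $i$ is non-characteristic for $F$, \cite[Proposition 5.4.13]{KS90} gives $\msupp(i^{-1}F)\subset \rho_i\varpi_i^{-1}(\msupp(F))$. Moreover $F_a=i^{-1}F$ is $\RR$-constructible, so $\msupp(F_a)$ is Lagrangian. This route yields only an inclusion, though, so the Goresky--MacPherson / Kashiwara statement is the step actually needed. The only real obstacle is that it requires the properness hypothesis, and Lemma \ref{lem-adopted} (i)$\Leftrightarrow$(iii) provides exactly that.
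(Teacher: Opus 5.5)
Your proof is correct and follows the same approach as the paper. The paper only says the lemma follows easily from Lemma \ref{lem-adopted}, and your argument supplies exactly that: you identify the set with $\rho_i\varpi_i^{-1}(\msupp(F))$, get properness from conditions (i)/(iii), and apply the fact on $\rho_f\varpi_f^{-1}(\Lambda)$ recalled before Proposition \ref{prop-Lag_push}.
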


Let $p_1$ and $p_2$ (resp. $q_1$ and $q_2$) be the first and the 
second projections of $X\times X$ (resp. $X\ftimes{\Sigma}X$) to $X$.
Then as in Ishimura \cite[D\'efiniton 1.2]{Ish92} for $F,G\in \BDrc(X)$ we set 
\begin{equation}
\muhom_\Sigma(G,F)\coloneq \mu_{\Delta_f}\rhom(q_2^{-1}G,q_1^!F) 
\quad \in\BDrc(T^\ast(X/\Sigma))
\end{equation}
and call it the relative $\muhom$ of $F$ and $G$.
Note that if $F,G\in \BDrc(X)$ are adapted to $f\colon X\longrightarrow\Sigma$ 
the inclusion map $\delta_f\colon X\ftimes{\Sigma}X\longhookrightarrow X\times X$
is non-characteristic for 
\begin{equation}
F\boxtimes D_X(G)\simeq \rhom(p_2^{-1}G,p_1^!F)
\end{equation}
and hence by Kashiwara-Schapira \cite[Proposition 4.3.5]{KS90} we obtain a morphism
\begin{align}
\rmR\beta_\ast\muhom(G,F) &\simeq \rmR\beta_\ast\mu_{\Delta_X}(F\boxtimes D_X(G)) \\
& \longrightarrow 
\mu_{\Delta_f}\bigl(\delta_f^{-1}(F\boxtimes D_X(G))\otimes\delta_f^! k_{X\times X}\bigr).
\end{align}
Moreover, by \cite[Proposition 5.4.13(ii)]{KS90} there exist isomorphims
\begin{equation}
\delta_f^{-1}(F\boxtimes D_X(G))\otimes\delta_f^! k_{X\times X}
\simto \delta_f^!(F\boxtimes D_X(G)) \simeq \rhom(q_2^{-1}G,q_1^!F).
\end{equation}
For $F,G\in\BDrc(X)$ adapted to $f\colon X\longrightarrow\Sigma$ we thus obtain a morphism
\begin{equation}
\rmR\beta_\ast \muhom(G,F)\longrightarrow\muhom_\Sigma(G,F).
\end{equation}
Note also that by Ishimura \cite[Proposition 1.4]{Ish92} for such $F,G\in\BDrc(X)$
we have 
\begin{equation}
\supp\muhom_\Sigma(G,F)\subset \msupp_\Sigma(F)\cap \msupp_\Sigma(G).
\end{equation}
Let $h\colon X\ftimes{\Sigma}X\longtwoheadrightarrow\Sigma$ be 
the natural morphism induced by $f\colon X\longrightarrow\Sigma$.

\begin{lemma}\label{lem-reldual}
There exists an isomorphism 
\begin{equation}
\delta_f^! k_{X\times X} \simeq h^{-1}\omega_\Sigma^{\otimes-1}.
\end{equation}
\end{lemma}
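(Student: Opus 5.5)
The plan is to compute $\delta_f^! k_{X\times X}$ by base change along the Cartesian square
\begin{equation*}
\begin{tikzcd}
X\ftimes{\Sigma}X \rar["\delta_f",hook] \dar["h"'] \drar["\square",phantom] & X\times X \dar["f\times f"] \\
\Sigma \rar["\delta_\Sigma"',hook] & \Sigma\times\Sigma .
\end{tikzcd}
\end{equation*}
This square is Cartesian by the definition of the fiber product, and $f\times f$ is a submersion because $f$ is.

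The key step is the following. For a Cartesian square whose vertical arrow $f\times f$ is smooth (a topological submersion), there is a natural isomorphism $\delta_f^!(f\times f)^{-1}\simeq h^{-1}\delta_\Sigma^!$; see \cite[Proposition 3.1.9]{KS90} together with \cite[Proposition 3.3.2]{KS90}, or equivalently the fact that $(f\times f)^!\simeq (f\times f)^{-1}\otimes\omega_{X\times X/\Sigma\times\Sigma}$, where $\omega_{X\times X/\Sigma\times\Sigma}$ is locally constant. Applying this to $k_{\Sigma\times\Sigma}$ gives
\[
\delta_f^! k_{X\times X}\simeq \delta_f^!(f\times f)^{-1}k_{\Sigma\times\Sigma}\simeq h^{-1}\delta_\Sigma^! k_{\Sigma\times\Sigma}.
\]

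It then remains to identify $\delta_\Sigma^! k_{\Sigma\times\Sigma}$. Since $\Delta_\Sigma\subset\Sigma\times\Sigma$ is a closed submanifold of codimension $d=\dim\Sigma$, we have $\delta_\Sigma^! k_{\Sigma\times\Sigma}\simeq\orsh_{\Delta_\Sigma/\Sigma\times\Sigma}[-d]$. The normal bundle of the diagonal is isomorphic to $T\Sigma$, so $\orsh_{\Delta_\Sigma/\Sigma\times\Sigma}\simeq\orsh_\Sigma$. Hence
\[
\delta_\Sigma^! k_{\Sigma\times\Sigma}\simeq\orsh_\Sigma[-d]\simeq(\orsh_\Sigma[d])^{\otimes-1}=\omega_\Sigma^{\otimes-1},
\]
using $\orsh_\Sigma^{\otimes 2}\simeq k_\Sigma$. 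Alternatively, one can compute directly: $\delta_\Sigma^! k_{\Sigma\times\Sigma}\simeq\delta_\Sigma^!\,D_{\Sigma\times\Sigma}(\omega_\Sigma^{\otimes-1}\boxtimes\omega_\Sigma^{\otimes-1})$, and this is dual to $\delta_\Sigma^{-1}$ of the box product, which gives the same answer.

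Combining the two steps yields $\delta_f^!k_{X\times X}\simeq h^{-1}\omega_\Sigma^{\otimes-1}$.

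The only delicate point is the smooth base change isomorphism $\delta_f^!(f\times f)^{-1}\simeq h^{-1}\delta_\Sigma^!$. I would justify it locally: $f$ is locally a projection $\Sigma\times\RR^{r}\to\Sigma$, so $f\times f$ is locally a projection $\Sigma\times\Sigma\times\RR^{2r}\to\Sigma\times\Sigma$, and the statement reduces to $\delta^!$ commuting with $\boxtimes k_{\RR^{2r}}$ (\cite[Proposition 3.1.9]{KS90}). The local isomorphisms glue because they are induced by the canonical morphism $h^{-1}\delta_\Sigma^!\to\delta_f^!(f\times f)^{-1}$.
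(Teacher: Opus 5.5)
Your main argument is correct and is exactly the paper's proof: base change along the Cartesian square $(f\times f)\circ\delta_f=\delta_\Sigma\circ h$, using the smoothness of $f\times f$ to get $\delta_f^!(f\times f)^{-1}k_{\Sigma\times\Sigma}\simeq h^{-1}\delta_\Sigma^!k_{\Sigma\times\Sigma}$, followed by the identification $\delta_\Sigma^!k_{\Sigma\times\Sigma}\simeq\omega_\Sigma^{\otimes-1}$. One small slip in your optional aside: $k_{\Sigma\times\Sigma}\simeq D_{\Sigma\times\Sigma}(\omega_\Sigma\boxtimes\omega_\Sigma)$, not $D_{\Sigma\times\Sigma}(\omega_\Sigma^{\otimes-1}\boxtimes\omega_\Sigma^{\otimes-1})$, which is $k_{\Sigma\times\Sigma}[4\dim\Sigma]$; with the corrected object one gets $\delta_\Sigma^!D_{\Sigma\times\Sigma}(\omega_\Sigma\boxtimes\omega_\Sigma)\simeq D_\Sigma(\omega_\Sigma^{\otimes2})\simeq\omega_\Sigma^{\otimes-1}$, as intended.
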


\begin{proof}
For the smooth morphism $g\coloneq f\times f\colon X\times X
\longtwoheadrightarrow\Sigma\times\Sigma$ let us consider the Cartesian diagram
\begin{equation}
\begin{tikzcd}
X\ftimes{\Sigma} X \rar["\delta_f",hook] \dar["h"'] \drar[phantom, 
"\square",pos=.4]  & X\times X \dar["g"] \\
\Sigma \rar["\delta_\Sigma"',hook] & \Sigma\times\Sigma.
\end{tikzcd}
\end{equation}
Then by the smoothness of $g$ we obtain isomorphisms 
\begin{equation}
\delta_f^! k_{X\times X}\simeq\delta_f^! g^{-1}k_{\Sigma\times\Sigma}\simeq
h^{-1}\delta_\Sigma^!k_{\Sigma\times\Sigma}\simeq h^{-1}\omega_\Sigma^{\otimes-1}.
\end{equation}
\end{proof}

Let $\pi_X\colon T^\ast X\longrightarrow X$ and $\pi_{X/\Sigma}
\colon T^\ast(X/\Sigma)\longrightarrow X$ be the projections.
Then by Lemma \ref{lem-adopted} for $F\in \BDrc(X)$ adapted to 
$f\colon X\longrightarrow\Sigma$ we obtain a chain of morphisms
\begin{align}\label{def-relcc}
\begin{split}
\rhom(F,F) &\simeq \rmR\pi_{X\ast}\muhom(F,F) \\
&\simeq \rmR\pi_{X/\Sigma\ast}\rmR\beta_\ast \mu_{\Delta_X}(F\boxtimes D_X(F)) \\
& \longrightarrow \rmR\pi_{X/\Sigma\ast} \mu_{\Delta_f} \Bigl( \delta_f^{-1}
(F\boxtimes D_X(F))\otimes\delta_f^!k_{X\times X} \Bigr) \\
&\simeq \rmR\pi_{X/\Sigma\ast}\rsect_{\msupp_\Sigma(F)} \mu_{\Delta_f} 
\Bigl( (F\fboxtimes{\Sigma} D_X(F))\otimes h^{-1}\omega_\Sigma^{\otimes-1} \Bigr) \\
& \longrightarrow \rmR\pi_{X/\Sigma\ast}\rsect_{\msupp_\Sigma(F)} 
\mu_{\Delta_f} \Bigl(
\gamma_{f\ast}\bigl(F\otimes\rhom(F,\omega_{X/\Sigma})\bigr)\Bigr) \\
& \longrightarrow \rmR\pi_{X/\Sigma\ast}\rsect_{\msupp_\Sigma(F)} \mu_{\Delta_f} 
(\gamma_{f\ast}\omega_{X/\Sigma}) \\
&\simeq  \rmR\pi_{X/\Sigma\ast}\rsect_{\msupp_\Sigma(F)} 
( \pi_{X/\Sigma}^{-1}\omega_{X/\Sigma} ),
\end{split}
\end{align}
where to construct the third morphism we used \cite[Proposition 4.3.5]{KS90} and the fifth morphism is induced by the one 
\begin{equation}
K\longrightarrow \gamma_{f\ast}\gamma_f^{-1}K
\simeq \gamma_{f\ast} (F\otimes\rhom(F,\omega_{X/\Sigma}))
\end{equation}
for $K\coloneq\delta_f^{-1}(F\boxtimes D_X(F))\otimes h^{-1}\omega_\Sigma^{\otimes-1}
\in\BDrc(X\ftimes{\Sigma}X)$.
Applying the global section functor $\rsect(X;\, \cdot\, )$ to it and 
taking the 0-th cohomology groups we obtain a morphism 
\begin{equation}
\Hom(F,F)\longrightarrow H_{\msupp_\Sigma(F)}^0\bigl(T^\ast(X/\Sigma);
\pi_{X/\Sigma}^{-1}\omega_{X/\Sigma}\bigr).
\end{equation}

\begin{definition}
We call the image of $\id_F\in \Hom(F,F)$ in $H_{\msupp_\Sigma(F)}^0\bigl(T^\ast(X/\Sigma);
\pi_{X/\Sigma}^{-1}\omega_{X/\Sigma}\bigr)$ the relative characteristic 
cycle of $F\in \BDrc(X)$ for $f$ and denote it by 
$\CCyc_\Sigma(F)$.
\end{definition}

For a point $a\in \Sigma$ of $\Sigma$ set $X_a\coloneq f^{-1}(a)\subset X$ and let 
$i_a\colon X_a\longhookrightarrow X$ be its inclusion map.
Then as in the proof of Lemma \ref{lem-reldual} we can easily obtain an isomorphism 
$i_a^{-1}\omega_{X/\Sigma}\simeq\omega_{X_a}$.
This implies that there exists a morphism 
\begin{equation}
\Phi_a\colon H_{\msupp_\Sigma(F)}^0\bigl(T^\ast(X/\Sigma)
;\pi_{X/\Sigma}^{-1}\omega_{X/\Sigma}\bigr)
\longrightarrow 
H_{\msupp_\Sigma(F)\cap T^\ast X_a}^0 (T^\ast X_a;\pi_{X_a}^{-1}\omega_{X_a}),
\end{equation}
where $\pi_{X_a}\colon T^\ast X_a\longrightarrow X_a$ is the projection.
Recall that $\msupp_\Sigma(F)\cap T^\ast X_a\subset 
T^\ast X_a$ is a closed conic subanalytic isotropic subset of $T^\ast X_a$.
We thus can consider our relative characteristic cycle 
$\CCyc_\Sigma(F)\in H_{\msupp_\Sigma(F)}^0
\bigl(T^\ast(X/\Sigma);\pi_{X/\Sigma}^{-1}\omega_{X/\Sigma}\bigr)$ 
as a family of Lagrangian cycles 
$\Phi_a(\CCyc(F_a)) \in \sect(T^\ast X_a;\SL_{X_a})$ 
($a \in \Sigma$) parameterized by $\Sigma$.
On the other hand, for the restriction $F_a \coloneq i_a^{-1}F\simeq F\vbar_{X_a}$
of $F\in\BDrc(X)$ to $X_a=f^{-1}(a) \subset X$ we have the characteristic cycle
\begin{equation}
\CCyc(F_a) \quad \in H_{\msupp(F_a)}^0(T^\ast X_a; \pi_{X_a}^{-1}\omega_{X_a})
\end{equation}
of Kashiwara and Schapira (see \cite[Section 9.4]{KS90}) and 
$\msupp(F_a)\subset \msupp_\Sigma(F)\cap T^\ast X_a$.
Then it would be very natural to expect that we have the following result. 

\begin{theorem}\label{thm-restofrelCC}
Assume that $F\in \BDrc(X)$ is adapted to $f\colon X\longrightarrow\Sigma$.
Then for any point $a\in \Sigma$ of $\Sigma$ we have
$\Phi_a(\CCyc_\Sigma(F))=\CCyc(F_a)$ in $\sect(T^\ast X_a; \SL_{X_a})$.
\end{theorem}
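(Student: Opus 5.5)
Our plan is to compare the two chains of morphisms defining $\CCyc_\Sigma(F)$ and $\CCyc(F_a)$ step by step. We show that the restriction $\Phi_a$ is compatible with each arrow in the chain \eqref{def-relcc}. The geometric input is the Cartesian square
\[
X_a\times X_a \hookrightarrow X\ftimes{\Sigma}X,
\]
over $\{a\}\hookrightarrow\Sigma$. The closed embedding $\iota\colon X_a\times X_a\hookrightarrow X\ftimes{\Sigma}X$ sends the diagonal $\Delta_{X_a}$ into $\Delta_f$. Since $T^\ast_{\Delta_f}(X\ftimes{\Sigma}X)\simeq T^\ast(X/\Sigma)$ restricts over $X_a$ to $T^\ast_{\Delta_{X_a}}(X_a\times X_a)\simeq T^\ast X_a$, the map $\rho_\iota$ is an isomorphism over $X_a$. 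We will also use that $\iota$ is non-characteristic for $K\coloneq\delta_f^{-1}(F\boxtimes D_X(F))\otimes h^{-1}\omega_\Sigma^{\otimes-1}$. This should follow from condition (iii) of Lemma \ref{lem-adopted}, i.e.\ from $X_a\hookrightarrow X$ being non-characteristic for $F$.

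\textbf{Step 1.} We identify the restrictions of the relevant objects. Using non-characteristicity and \cite[Proposition 5.4.13]{KS90}, we get $\iota^{-1}(F\fboxtimes{\Sigma}D_X(F))\simeq F_a\boxtimes D_{X_a}(F_a)$, up to the twist by $h^{-1}\omega_\Sigma^{\otimes -1}$ restricted to the point $a$. This twist is compensated by the identification $i_a^{-1}\omega_{X/\Sigma}\simeq\omega_{X_a}$ used to define $\Phi_a$. Similarly $i_a^{-1}\rhom(F,\omega_{X/\Sigma})\simeq D_{X_a}(F_a)$, again because $i_a$ is non-characteristic.

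\textbf{Step 2.} We show that the morphism $\beta_\iota$ of \cite[Proposition 4.3.5]{KS90} gives a commutative square. Its top row is $\mu_{\Delta_f}(K)\to\mu_{\Delta_f}(\gamma_{f\ast}\omega_{X/\Sigma})\simeq\pi_{X/\Sigma}^{-1}\omega_{X/\Sigma}$, restricted to $T^\ast X_a$. Its bottom row is the analogous chain $\mu_{\Delta_{X_a}}(F_a\boxtimes D F_a)\to\mu_{\Delta_{X_a}}(\delta_\ast\omega_{X_a})\simeq\pi_{X_a}^{-1}\omega_{X_a}$ of \cite[Section 9.4]{KS90}. Naturality of $\beta_\iota$ in the sheaf argument handles the evaluation arrow $K\to\gamma_{f\ast}\gamma_f^{-1}K\to\gamma_{f\ast}\omega_{X/\Sigma}$. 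The compatibility of the final isomorphism $\mu_{\Delta}(\delta_\ast\omega)\simeq\pi^{-1}\omega$ with $\beta_\iota$ is a local computation for the microlocalization of a sheaf supported on the diagonal. To keep the functoriality tidy we would invoke Proposition \ref{prop-micro} for the compositions $\Delta_{X_a}\hookrightarrow\Delta_f\hookrightarrow\Delta_X$, that is, $X_a\times X_a\to X\ftimes{\Sigma}X\to X\times X$. This identifies the composite of $\beta_{\delta_f}$ (used in \eqref{def-relcc}) followed by $\beta_\iota$ with $\beta_{\delta_f\circ\iota}$.

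\textbf{Step 3.} We check that the upper part of the chain commutes. The isomorphism $\rhom(F,F)\simeq \rmR\pi_{X\ast}\muhom(F,F)$, followed by restriction, is compatible with $i_a^{-1}\rhom(F,F)\to\rhom(F_a,F_a)$, and $\id_F$ maps to $\id_{F_a}$. Taking $H^0$ and global sections then gives $\Phi_a(\CCyc_\Sigma(F))=\CCyc(F_a)$. The identity holds first in $H^0_{\msupp_\Sigma(F)\cap T^\ast X_a}$; it passes to $\sect(T^\ast X_a;\SL_{X_a})$ by the inductive limit.

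The main obstacle is Step 2. We must make sure that the non-characteristic restriction isomorphism for $\mu$ is compatible with the evaluation/trace morphism, including the twists by $\omega_\Sigma$ and $\omega_{X/\Sigma}$, and that no sign or orientation discrepancy arises. We would attack it by reducing to a local model $X=X_a\times\Sigma$ with $f$ the projection, which is possible since $f$ is a submersion and the statement is local on $X_a$. In that model $\delta_f^!k\simeq h^{-1}\omega_\Sigma^{\otimes-1}$ and $\iota$ are explicit products, so the compatibility reduces to base change for $\mu$ along the projection to $\Sigma$.
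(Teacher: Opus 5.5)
Your proposal is correct and follows essentially the same route as the paper. The paper also compares the chain \eqref{def-relcc} square by square with the Kashiwara--Schapira chain for $F_a$, and its key step is likewise Proposition \ref{prop-micro} applied to $X_a\times X_a\to X\ftimes{\Sigma}X\to X\times X$. There are two minor differences. First, the paper also factors $X_a\times X_a\to X\times X$ through the graph of $i_a$ in $X_a\times X$, so that the top square matches the restriction $\Hom(F,F)\to\Hom(F_a,F_a)$ exactly as in \cite[Proposition 9.4.3]{KS90}. Second, it cites that proposition for the remaining squares instead of reducing to a local product model $X=X_a\times\Sigma$.
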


\begin{proof}
Our proof is similar to that of \cite[Proposition 9.4.3]{KS90}. 
Now let $\delta_{X_a} \colon X_a\longhookrightarrow X_a\times X_a$ be the diagonal embedding 
and set $\Lambda \coloneq \mathrm{SS}_{\Sigma} (F) \cap T^\ast X_a $.
Then it suffices to show the commutativity of Diagram 3.a below, where we denote
$\mathrm{R} \Gamma, \rhom, \ldots$ simply by $\Gamma, \mathcal{H}om, \ldots$ respectively.

\begin{center}
$
     \xymatrix{
        \Hom (F,F) \ar[r]  &\Hom (F_a, F_a) \\
        \sect_{\Delta_X} \( X^2; F\boxtimes \mathrm{D}_X F\) 
        \ar[u]^{\wr} \ar[r]^{\textcircled{\scriptsize 1}} 
        \ar[d]_{\textcircled{\scriptsize 2}} \ar@{}[rd]|{\text{\scalebox{1.2}{$\mathbf{A}$}}} &
        \sect_{\Delta_{X_a}} \( {X_a}^2; F_a \boxtimes \mathrm{D}_{X_a} F_a\) \ar[u]^{\wr} \ar@{=}[d] \\
        \sect_{\Delta_f} \( X\times_{\Sigma} X; 
        \delta_f^{-1}(F\boxtimes D_X(F))\otimes\delta_f^!k_{X\times X}\) 
        \ar[r]_-{\textcircled{\scriptsize 3}} \ar[d]_{\wr} &
        \sect_{\Delta_{X_a}} \( {X_a}^2; F_a \boxtimes \mathrm{D}_{X_a} F_a\)  \ar[d]_{\wr} \\
        \sect_{\mathrm{SS}_{\Sigma} (F)} 
        \( T^\ast (X/\Sigma); \mu_{\Delta_f} 
        \( \delta_f^{-1}(F\boxtimes D_X(F))\otimes\delta_f^!k_{X\times X}\) \) 
        \ar[d] \ar[r] & 
        \sect_{\Lambda} \( T^\ast X_a; \mu_{\Delta_{X_a}}\( F_a\boxtimes \mathrm{D}_{X_a} F_a\) \) \ar[d] \\
        \sect_{\mathrm{SS}_{\Sigma} (F)} 
        \( T^\ast \( X/\Sigma\) ; \mu_{\Delta_f}\( {\gamma_f}_\ast \( F\otimes  \shom ( F,\omega_{X/\Sigma} ) \) \) \) 
        \ar[d] \ar[r] & 
        \sect_{\Lambda} (T^\ast X_a; \mu_{\Delta_{X_a}}({\delta_{X_a}}_\ast  (F_a\otimes \mathrm{D}_{X_a} F_a))) \ar[d] \\
        \sect_{\mathrm{SS}_{\Sigma} (F)} \( T^\ast \( X/\Sigma\) ; \mu_{\Delta_f}\( {\gamma_f}_\ast \omega_{X/\Sigma} \) \) \ar[r] \ar[d]_{\wr} & 
        \sect_{\Lambda} (T^\ast X_a; \mu_{\Delta_{X_a}}({\delta_{X_a}}_\ast \omega_{X_a}) ) \ar[d]_{\wr} \\
        \sect_{\mathrm{SS}_{\Sigma} (F)} ( T^\ast \( X/\Sigma\); \pi_{X/\Sigma}^{-1} \omega_{X/\Sigma}) \ar[r] &
        \sect_{\Lambda} \( T^\ast X_a; \pi_{X_a}^{-1} \omega_{X_a}\)  \\
         }
$
\bigskip 
Diagram 3.a 
\end{center}
Here, the vertical arrows on the left are constructed by 
the chain of morphisms of (\ref{def-relcc}). 
Moreover, the vertical ones on the right 
are constructed as in \cite[Proposition 9.4.3]{KS90}.
Let us explain the construction of the horizontal arrows in Diagram 3.a.
For this purpose, 
let $i_a \colon X_a \longhookrightarrow X$ be the inclusion map, 
$f_1 \colon X_a^2 \longrightarrow X_a\times X$ the map $(\mathrm{id}_{X_a}, i_a)$,
$f_2 \colon X_a\times X \longrightarrow X^2$ the one $(i_a,\mathrm{id}_X)$ and
$\Gamma_{i_a} \subset X_a\times X $ the graph of $i_a$.
By the diagram
\begin{equation}
   \xymatrix{
      X_a^2 \ar[r]^-{f_1} & X_a \times X \ar[r]^-{f_2} & X^2 \\
      \Delta_{X_a} \ar[r] \ar@{^{(}->}[u] & \Gamma_{i_a} \ar[r] \ar@{^{(}->}[u] & \Delta_X \ar@{^{(}->}[u]
   }
\end{equation}
and \cite[Proposition 4.3.5 (i)]{KS90}
we obtain the following morphisms
\begin{align}
   &\varpi_{i_a}^{-1} \circ \mu_{\Delta_X} \to \mu_{\Gamma_{i_a}} \circ f_2^{-1}, \label{upper-left-mor}\\
   &{\rho_{i_a}}_! \circ \mu_{\Gamma_{i_a}} \to \mu_{\Delta_{X_a}} \circ (\omega_{X_a/X} \otimes f_1^{-1}). \label{upper-right-mor}
\end{align}
Then the morphism \textcircled{\scriptsize 1} in Diagram 3.a is constructed 
by (\ref{upper-left-mor}) and (\ref{upper-right-mor}).
On the other hand,
we define a map 
$\gamma_a: X_a^2 \longrightarrow X\times_{\Sigma} X$ by
$(x,y)\longmapsto (x,y) \in X\times_{\Sigma} X$ for $(x,y)\in X_a^2$.
We denote the inclusion map 
$X_a\times_X T^\ast(X/\Sigma) \simeq T^\ast X_a \longhookrightarrow T^\ast(X/\Sigma)$ by 
$\iota$.
Then by the diagram
\begin{equation}
   \xymatrix{
      X_a^2 \ar[r]^-{\gamma_a} & X\times_{\Sigma} X \ar[r]^-{\delta_f} & X^2 \\
      \Delta_{X_a} \ar@{^{(}->}[u] \ar[r] & \Delta_f \ar@{^{(}->}[u] \ar[r] & \Delta_X \ar@{^{(}->}[u]  
   }
\end{equation}
and \cite[Proposition 4.3.5 (i)]{KS90}
we obtain the natural morphisms
\begin{align}
   &\beta_! \circ \mu_{\Delta_X} \to \mu_{\Delta_f} \circ (\delta_f^{-1}\otimes \delta_f^!k_{X\times X}), \label{vert-mor}\\
   &\iota^{-1} \circ \mu_{\Delta_f} \to \mu_{\Delta_{X_a}} \circ \gamma_a^{-1}. \label{lower-mor}
\end{align}
It follows that the morphisms \textcircled{\scriptsize 1} and \textcircled{\scriptsize 2} are 
constructed by (\ref{vert-mor}) and (\ref{lower-mor}), respectively.
The other vertical arrows in Diagram 3.a are constructed similarly.
Since the two morphisms $\delta_f \circ \gamma_a$ and $f_2\circ f_1$ are equal,
the commutativity of $\mathbf{A}$ in Diagram 3.a follows from Proposition \ref{prop-micro}.
We can also check the commutativity of the remaining squares in Diagram 3.a 
as in the proof of \cite[Proposition 9.4.3]{KS90}.
This completes the proof.
\end{proof}

\section{Real nearby cycle sheaves and
their characteristic cycles}\label{sec-realnearby}

In this section, given a real analytic function and an 
$\RR$-constructible sheaf on a real analytic manifold, 
we define a real nearby cycle sheaf associated to them 
and study its basic properties.
Then we obtain a formula for its characteristic cycle.
Let $X$ be a real analytic manifold and 
$f \colon X \longrightarrow \RR$ a non-constant real analytic function on it.
Set $X_0 \coloneq f^{-1}(0) \subset X, \{f>0 \} \coloneq 
\{ x\in X \mid f(x)>0 \} \subset X$ and 
let $i_X \colon X_0 \longhookrightarrow X $ be the inclusion map.

\begin{definition}
    For an $\RR$-constructible sheaf $F \in 
    \BDrc (X)$ on $X$
    we set 
    \begin{equation}
        \psi_f^{\RR}(F) \coloneq i_X^{-1} \rsect_{\{f>0 \}} (F) \quad 
        \in \BDrc(X_0)
    \end{equation}
    and call it the real nearby cycle sheaf of $F$ along $f$.
\end{definition}
We thus obtain a functor 

\begin{equation}
    \psi_f^{\RR} (\cdot) \colon \BDrc (X)
     \longrightarrow \BDrc(X_0).
\end{equation}
This functor is related to the right Milnor fibers of 
$f\colon X \longrightarrow \RR$ 
defined as follows.
For a point $x_0 \in X_0$ of $X_0 =f^{-1} (0)$
and a sufficiently small open ball $B (x_0) \subset X$
centered at it 
we can show that there exists $0< \eta_0 \ll1$ such that 
for any $0< \eta <\eta_0$ the restriction

\begin{equation}
    B(x_0) \cap f^{-1}((0,\eta)) \longrightarrow
    (0,\eta)
\end{equation}
of $f\colon X \longrightarrow \RR$
is a fiber bundle over the open interval $(0,\eta) \subset \RR.$
We call its fiber $M_{f,x_0}^\RR \subset \{f>0 \}$
the right Milnor fiber of $f$ at the point $x_0 \in X_0.$
Then, as in the case of Deligne's nearby cycle functor defined for 
holomorphic functions in \cite{Del73}, 
we obtain the following result (see e.g. \cite[Proposition 4.2.2]{Dim04}, 
\cite{Le77} and the proof of \cite[Theorem 2.6]{Tak25}).

\begin{lemma}
    Let $F\in \BDrc (X)$ be an 
    $\RR$-constructible sheaf on $X$.
    Then for any point $x_0 \in X_0$ of $X_0=f^{-1}(0)$
    and any $j\in \mathbb{Z}$ there exists an isomorphism
    \begin{equation}
        H^j \psi_f^{\RR}(F)_{x_0} \simto H^j (M_{f,x_0}^\RR;F).
    \end{equation}
\end{lemma}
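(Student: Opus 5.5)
The plan is to compute the stalk directly and then pass to a limit over a cofinal system of neighborhoods of $x_0$ in which the restriction of $F$ is locally constant in the direction of $f$.

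\textbf{Stalk as a direct limit.} Since $i_X^{-1}$ is just restriction, we have
\begin{equation}
H^j\psi_f^{\RR}(F)_{x_0}\simeq \varinjlim_{U\ni x_0} H^j\bigl(U;\rsect_{\{f>0\}}(F)\bigr)
\simeq \varinjlim_{U\ni x_0} H^j\bigl(U\cap\{f>0\};F\bigr),
\end{equation}
where $U$ ranges over open neighborhoods of $x_0$. The second isomorphism uses $\rsect(U;\rsect_{V}F)\simeq\rsect(U\cap V;F)$ for an open set $V=\{f>0\}$.

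\textbf{Cofinal system and conic structure.} We take the neighborhoods $U_{r,\eta}\coloneq B_r(x_0)\cap f^{-1}((-\eta,\eta))$ with $0<\eta\ll r\ll1$. These form a cofinal system, and
\begin{equation}
U_{r,\eta}\cap\{f>0\}=B_r(x_0)\cap f^{-1}((0,\eta)).
\end{equation}
Fix a subanalytic Whitney stratification of $X$ adapted to $F$, to $X_0$ and to $\{f>0\}$. By the local conic structure theorem and the existence of Thom stratifications (the standard Milnor--Lê argument used for the definition of $M^\RR_{f,x_0}$), there is $r_0>0$ such that for $0<r\le r_0$ the spheres $\partial B_r(x_0)$ are transversal to all strata contained in $X_0$. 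By the curve selection lemma, the restriction of $f$ to the strata of $\overline{B_r(x_0)}\cap\{0<f<\eta\}$ (the interior strata and their intersections with the sphere) is then a stratified submersion for $\eta<\eta_0(r)$. Thom's first isotopy lemma gives a stratified trivialization
\begin{equation}
B_r(x_0)\cap f^{-1}((0,\eta))\simeq M^\RR_{f,x_0}\times(0,\eta)
\end{equation}
compatible with the strata. Since $F$ is constructible with respect to this stratification, it is isomorphic on this set to $q^{-1}(F|_{M^\RR_{f,x_0}})$, where $q$ is the projection to the fiber. The interval $(0,\eta)$ is contractible, so
\begin{equation}
H^j\bigl(U_{r,\eta}\cap\{f>0\};F\bigr)\simeq H^j(M^\RR_{f,x_0};F).
\end{equation}

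\textbf{Transition maps.} It remains to show that the restriction maps in the limit are isomorphisms. For $\eta'<\eta$ at fixed $r$ this is immediate from the product structure. For $r'<r$ (with $\eta$ small relative to $r'$) we use a non-characteristic deformation, as in \cite[Proposition 2.7.2]{KS90}, applied to the family of open sets $B_s(x_0)\cap\{0<f<\eta\}$ for $s\in[r',r]$. Transversality of the spheres to the strata, together with $\msupp(F)$ being contained in the conormals of the strata, makes the boundaries non-characteristic. Alternatively, one can invoke the conic structure and the isotopy lemma once more to obtain a stratified isotopy retraction.

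The main obstacle is the choice of $\eta$ uniform in the radii $s\in[r',r]$, so that both non-characteristicity (the sphere–fiber transversality) and the product structure hold simultaneously. I would handle this by the standard curve-selection argument: on the compact set $\overline{B_r}\setminus B_{r'}$ near $X_0$, the covectors $d f$ and $d|x-x_0|^2$ are not positively proportional on strata. Hence the system is essentially constant and the limit equals $H^j(M^\RR_{f,x_0};F)$.
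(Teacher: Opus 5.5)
Your route is the standard Milnor--L\^e argument that the paper invokes only by citation ([Dim04, Proposition 4.2.2], [Le77], [Tak25]): write the stalk as a colimit over $B_r(x_0)\cap f^{-1}((-\eta,\eta))$, then trivialize $B_r(x_0)\cap f^{-1}((0,\eta))\to(0,\eta)$. The fixed-radius part is fine. There curve selection does work: a selected curve of critical points of $f|_S$ or $f|_{S\cap\partial B_r}$ stays in one stratum, so $f$ is constant along it, contradicting $f(\gamma(0))=0<f(\gamma(t))$. The gap is the comparison of radii $r'<r$, i.e.\ the independence of $M^{\RR}_{f,x_0}$ from the ball. You need it for the colimit to equal $H^j(M^{\RR}_{f,x_0};F)$, and neither of your justifications establishes it.

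\textbf{The deformation lemma.} Applying [KS90, Proposition 2.7.2] to $\Omega_s=B_s(x_0)\cap\{0<f<\eta\}$ violates its hypothesis at boundary points $x\in\partial B_s\cap X_0$. There the lemma requires $F_x\to\varinjlim_{V\ni x}\mathrm{R}\Gamma(V\cap\Omega_s;F)$ to be an isomorphism. This fails, e.g., as soon as $F$ has a nonzero summand supported on $X_0$. Suppose you instead deform balls for $G=\rsect_{\{0<f<\eta\}}(F)$. Then the condition at $x\in X_0\cap\partial B_s$ is $d\rho(x)\notin\msupp(G)$, where $\rho=|x-x_0|^2$. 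But $\msupp(G)$ over $X_0$ involves limits of covectors $\xi+\lambda\,df$ ($\xi\in\msupp(F)$, $\lambda\ge0$) at nearby points of $\{f>0\}$. Transversality of spheres to strata does not control these limits.

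\textbf{The curve-selection claim.} Take an analytic curve $\gamma$ starting at $x_\infty\in X_0$ with $|x_\infty-x_0|\ge r'$ and lying where $d(f|_S)=\lambda\,d(\rho|_S)$. You only get $(f\circ\gamma)'=\lambda\,(\rho\circ\gamma)'$. Since $\rho\circ\gamma$ has no forced monotonicity, there is no contradiction. Curve selection from $x_0$ itself (Milnor's lemma) forces $(\rho\circ\gamma)'>0$, so it excludes only \emph{negative} proportionality, the opposite of what you assert. Excluding positive proportionality near the compact set $K=X_0\cap\{r'\le|x-x_0|\le r\}$, uniformly as $f\to+0$, is a statement about limits of $\ker d(f|_S)$. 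That is exactly Thom's $a_f$ condition.

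\textbf{The missing ingredient.} Take the stratification adapted to $F$ to satisfy $a_f$. This is available for real analytic (indeed subanalytic) functions, e.g.\ via Kurdyka--Parusi\'nski's strict Thom stratifications. Then at $x_\infty\in S_0\subset X_0$, limits of $\ker d(f|_S)$ contain $T_{x_\infty}S_0$. Since $\partial B_s$ is transversal to $S_0$, $T_{x_\infty}S_0\not\subset\ker d\rho(x_\infty)$. By compactness of $K$ there is $\eta_0>0$ such that $(\rho,f)$ is a stratified submersion on $\{r'\le|x-x_0|\le r\}\cap\{0<f<\eta_0\}$. Thom's isotopy lemma then gives a collar, so the restriction $H^j(B_r\cap f^{-1}(a);F)\to H^j(B_{r'}\cap f^{-1}(a);F)$ is an isomorphism. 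With this replacing your last step, the argument closes.
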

As an analogue of \cite[Proposition 4.2.11]{Dim04} and 
\cite[Exercise VIII.15]{KS90}, 
we have also the following results.

\begin{proposition}
    Let $\nu \colon Y \longrightarrow X$ be a proper morphism
    of real analytic manifolds and 
    $f\colon X\longrightarrow \RR$ a non-constant real analytic
    function on $X$.
    For the non-constant real analytic functions $f\colon X \longrightarrow
    \RR$ and $g\coloneq f\circ \nu \colon Y \longrightarrow \RR$
    we set $X_0 \coloneq f^{-1}(0) \subset X$ and 
    $Y_0\coloneq g^{-1}(0)=\nu^{-1}(X_0) \subset Y$.
    Then for any $\RR$-constructible sheaf $G\in \BDrc(Y)$
    on $Y$ there exists an isomorphism 
    \begin{equation}
        \psi_f^{\RR}(\rmR \nu_\ast G) \simto \rmR {\nu_0}_\ast \psi_g^{\RR}(G),
    \end{equation}
    where $\nu_0 \colon Y_0 \longrightarrow X_0$ is the restriction
    of $\nu \colon Y \longrightarrow X$ to $Y_0\subset Y$.
\end{proposition}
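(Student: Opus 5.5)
The plan is to construct a natural morphism and then check it is an isomorphism using proper base change. Write $j\colon \{f>0\}\longhookrightarrow X$ and $j'\colon \{g>0\}\longhookrightarrow Y$ for the open embeddings, and $i_Y\colon Y_0\longhookrightarrow Y$ for the closed embedding. Since $g=f\circ\nu$, we have $\nu^{-1}(\{f>0\})=\{g>0\}$ and $\nu^{-1}(X_0)=Y_0$, so the square formed by $i_Y$, $i_X$, $\nu$ and $\nu_0$ is Cartesian.

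\textbf{Step 1.} Since $\nu$ is proper, $\rmR\nu_\ast=\rmR\nu_!$. The functor $\rsect_{\{f>0\}}$ coincides with $\rmR j_\ast j^{-1}$, and likewise $\rsect_{\{g>0\}}=\rmR j'_\ast j'^{-1}$. Write $\nu'\colon\{g>0\}\longrightarrow\{f>0\}$ for the restriction of $\nu$. Then
\begin{equation}
\rsect_{\{f>0\}}(\rmR\nu_\ast G)\simeq \rmR j_\ast j^{-1}\rmR\nu_\ast G\simeq \rmR j_\ast\rmR\nu'_\ast j'^{-1}G\simeq \rmR\nu_\ast\rmR j'_\ast j'^{-1}G\simeq\rmR\nu_\ast\rsect_{\{g>0\}}(G).
\end{equation}
The second isomorphism is base change for the Cartesian square of open subsets, which holds for $\rmR\nu_\ast$ restricted to an open set. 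The third uses $j\circ\nu'=\nu\circ j'$. Equivalently, one may invoke the general isomorphism $\rsect_Z\rmR\nu_\ast\simeq\rmR\nu_\ast\rsect_{\nu^{-1}(Z)}$ for locally closed $Z$ (\cite[Proposition 2.3.9]{KS90}, or its analogue for open sets).

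\textbf{Step 2.} Apply $i_X^{-1}$ to the result of Step 1. Because $\nu$ is proper, the proper base change theorem for the Cartesian square above gives
\begin{equation}
i_X^{-1}\rmR\nu_\ast\simeq \rmR\nu_{0\ast}i_Y^{-1}.
\end{equation}
Here $\nu_0$ is proper as the restriction of a proper map to a closed subset, so $\rmR\nu_{0!}=\rmR\nu_{0\ast}$. Combining this with Step 1 yields
\begin{equation}
\psi_f^{\RR}(\rmR\nu_\ast G)\simeq\rmR\nu_{0\ast}\psi_g^{\RR}(G).
\end{equation}

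\textbf{Remaining checks.} It is worth verifying that the composite isomorphism equals the canonical morphism, namely the one obtained from adjunction $G\to \nu^{-1}\rmR\nu_\ast G$ followed by $i^{-1}\rmR\nu_\ast\to\rmR\nu_{0\ast}i^{-1}$. This is a routine compatibility check. $\RR$-constructibility of both sides is preserved, since proper direct images and the functors involved preserve $\BDrc$. There is no substantial obstacle beyond correctly citing proper base change and the commutation of $\rsect_U$ with $\rmR\nu_\ast$.
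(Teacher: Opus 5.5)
Your proposal is correct and follows the paper's proof: you first use $\nu^{-1}(\{f>0\})=\{g>0\}$ to commute the functors, $\rsect_{\{f>0\}}\rmR\nu_\ast\simeq\rmR\nu_\ast\rsect_{\{g>0\}}$, and then apply proper base change $i_X^{-1}\rmR\nu_\ast\simeq\rmR\nu_{0\ast}i_Y^{-1}$ to the Cartesian square. The compatibility check with a canonical morphism that you mention is optional, since the statement only asserts that an isomorphism exists.
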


\begin{proof}
    Let us consider the Cartesian diagram
    \begin{equation}
        \xymatrix{
            Y_0 \ar@{^{(}-_>}[r]^{i_Y} \ar[d]_{\nu_0} \ar@{}[rd]|{\Box} & 
            Y \ar[d]^{\nu} \\
            X_0 \ar@{^{(}-_>}[r]_{i_X} & X.
        }
    \end{equation}
    Then we obtain isomorphisms
    \begin{align}
        \psi_f^{\RR} (\rmR \nu_\ast G) & =i_X^{-1} \rsect_{\{ f>0 \}} (\rmR \nu_\ast G) \\
        & \simeq i_X^{-1} \rmR \nu_\ast \rsect_{\{ g>0 \}}(G) \\
        & \simeq \rmR {\nu_0}_\ast i_Y^{-1} \rsect_{\{ g>0 \}}(G) \\
        & \simeq \rmR {\nu_0}_\ast \psi_g^{\RR} (G).
    \end{align}
\end{proof}

\begin{corollary}
    Let $f\colon X \longrightarrow \RR$ be a non-constant real 
    analytic function and $i_f \colon X \longhookrightarrow X\times \RR_t$ 
    $(x\longmapsto (x,f(x)))$
    the graph embedding by it.
    Then for $F\in \BDrc(X)$,
    the real analytic function $t\colon X\times \RR \longrightarrow \RR$ $((x,t)\longmapsto t)$
    on $X\times \RR$ and the inclusion map 
    $\iota \colon X_0=f^{-1}(0) \longhookrightarrow X$
    we have an isomorphism
    \begin{equation}
        \psi_t^{\RR}({i_f}_\ast F) \simto \iota_\ast \psi_f^{\RR}(F).
    \end{equation}
\end{corollary}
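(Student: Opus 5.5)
The plan is to deduce the corollary directly from the preceding Proposition on proper direct images. We apply it with $Y:=X$, with $\RR_t\times X$ in the role of the target, with $\nu:=i_f$, and with the function $t$ on $X\times\RR_t$. So we set $\nu\coloneq i_f\colon X\longhookrightarrow X\times\RR_t$, which is a closed embedding and hence proper, and we take $t\colon X\times\RR_t\longrightarrow\RR$ as the non-constant real analytic function on the target.

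First we check the composite function. We have $t\circ i_f=f$, so the function called $g$ in the Proposition is exactly $f$. Its zero set is $Y_0=f^{-1}(0)=X_0=i_f^{-1}(t^{-1}(0))$. The restriction $\nu_0\colon X_0\longrightarrow t^{-1}(0)\simeq X\times\{0\}\simeq X$ is $x\longmapsto (x,0)$. Under the natural identification $t^{-1}(0)\simeq X$, this map is the inclusion $\iota\colon X_0\longhookrightarrow X$.

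Next we apply the Proposition with $G\coloneq F$. It gives
\begin{equation*}
\psi_t^{\RR}(\rmR i_{f\ast}F)\simto \rmR\iota_\ast\psi_f^{\RR}(F).
\end{equation*}
Since $i_f$ and $\iota$ are closed embeddings, $\rmR i_{f\ast}=i_{f\ast}$ and $\rmR\iota_\ast=\iota_\ast$, and this is the stated isomorphism.

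The only real point is the identification of $\nu_0$ with $\iota$ via $t^{-1}(0)\simeq X$. This is immediate, so there is no genuine obstacle. If one prefers not to quote the Proposition, the same chain of isomorphisms can be written out directly. The key input is $\rsect_{\{t>0\}}\,i_{f\ast}\simeq i_{f\ast}\rsect_{\{f>0\}}$, which holds because $i_f^{-1}(\{t>0\})=\{f>0\}$. The remaining step is proper base change for the Cartesian square formed by $X_0\to X$ and $X\times\{0\}\to X\times\RR_t$.
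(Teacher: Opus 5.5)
Your proposal is correct and matches the paper, which states this corollary without a separate proof as an immediate consequence of the preceding Proposition. That Proposition is applied to the proper map $\nu=i_f$ and the function $t$ on $X\times\RR_t$, so that $t\circ i_f=f$ and $\nu_0$ becomes $\iota$ under $t^{-1}(0)\simeq X$; your fallback argument via $i_f^{-1}(\{t>0\})=\{f>0\}$ and proper base change just unwinds the proof of that Proposition.
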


By this corollary, for the study of the real nearby cycle sheaves, 
it suffices to consider only the case where 
$\mathrm{d} f(x_0)\neq 0$ for any point $x_0\in X_0 =f^{-1}(0) \subset X$
and hence $X_0=f^{-1}(0)$ is a smooth hypersurface in $X$.
Until the end of this section, we shall always assume that this 
condition on the non-constant real analytic function $f\colon X \longrightarrow \RR$
is satisfied and give a formula for the characteristic cycle 
$\C \C(\psi_f^{\RR}(F))$ of $\psi_f^{\RR}(F) \in \BDrc(X_0)$.
Shrinking $X$ if necessary, for an open interval
$(-\varepsilon , \varepsilon) \subset \RR$ $(\varepsilon >0)$
in $\RR$ we may assume also that the restriction
$f^{-1}((-\varepsilon, \varepsilon)) \longrightarrow (-\varepsilon, \varepsilon)$
of $f$ is smooth.
Therefore we replace $X$ by $f^{-1}((-\varepsilon, \varepsilon)) \subset X$. 
As the constructions of $\psi_f^{\RR}(F)$ and its characteristic
cycle being local, we may assume also that $f$ is 
proper on the support $\supp F$ of 
$F\in \BDrc(X)$. 
Indeed, for a point $x_0\in X_0$ of the smooth hypersurface 
$X_0=f^{-1}(0)\subset X$ of $X$, 
to describe the characteristic cycle $\C \C (\psi_f^{\RR}(F))$ 
on its neighborhood in $X_0$ for a relatively compact subanalytic
open neighborhood $U\subset X$ of $x_0$ in $X$ 
we may replace $F$ by $F_U = {k}_U \otimes_{k_X} F
\in \BDrc(X)$. For this reason, in what follows we assume that $f$ 
is proper on the support $\supp F$ of $F\in \BDrc(X)$. 
Then, by the microlocal Bertini-Sard theorem \cite[Proposition 8.3.12]{KS90} of Kashiwara and 
Schapira we can take $\varepsilon >0$ sufficiently small so that
for the open interval $I\coloneq (0, \varepsilon)\subset \RR$
in $\RR$ and the open subset $X_I\coloneq f^{-1}(I) \subset X$
the $\RR$-constructible  sheaf $F|_{X_I} \in \BDrc(X_I)$
on $X_I$ is adapted to the smooth map $f_I \colon X_I \longrightarrow I$
obtained by restricting $f$ to $X_I \subset X$.
Applying the constructions in Section \ref{sec-relCC} 
to $F|_{X_I} \in \BDrc(X_I) $
we can then associate its relative characteristic cycle 
\begin{equation}
    \CCyc_I (F|_{X_I}) \quad \in H^0_{{\mathrm{SS}}_I(F|_{X_I})} 
(T^\ast(X_I/I); \pi^{-1}_{X_I/I} \omega_{X_I/I}).
\end{equation}
For a point $a\in I$ in $I=(0, \varepsilon)\subset \RR$
set $X_a \coloneq f^{-1}(a) \subset X$
and $F_a \coloneq F|_{X_a} \in \BDrc(X_a)$.
Then by Theorem \ref{thm-restofrelCC}
we can consider $\C \C_I(F|_{X_I})$
as a family of Lagrangian cycles $\C \C (F_a) \quad (a\in I)$
parameterized by $I=(0,\varepsilon)$.

\begin{lemma}\label{ex-lem}
In the situation as above, 
the limit $\lim_{a\to +0} \C \C(F_a) $
of Lagrangian cycles $\C \C (F_a) \quad (a\in I=(0, \varepsilon))$
(in the sense of Section \ref{subsec-limit}) exists. 
\end{lemma}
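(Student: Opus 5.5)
To show that the limit exists in the sense of Section \ref{subsec-limit}, we must put the family into that framework. The limit morphism $\lim_{t\to+0}$ is defined only for a pair $(\Lambda_I,\Lambda_J)$ of closed subanalytic subsets of $M\times I$ and $M\times J$ with $\overline{\Lambda_I}=\Lambda_J$, $\dim\le n+1$, and every slice $\Lambda_{\{t\}}$ ($t\in J$) isotropic. So the task is to realize $\CCyc_I(F|_{X_I})$ as a class in $H_{n+1}^\BM(\Lambda_I;\,\cdot\,)$ for a fixed ambient manifold $M$, and to check the isotropy of the slice at $t=0$.

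First I trivialize the family. Since $f$ is a submersion near $X_0$, after shrinking (relatively compact neighborhood of a point of $X_0$, as allowed in the statement of Theorem \ref{thm-limit}) there is a real analytic isomorphism $X\simeq X_0\times(-\varepsilon,\varepsilon)$ with $f$ the second projection. This gives
\begin{equation}
T^\ast(X/(-\varepsilon,\varepsilon))\simeq T^\ast X_0\times(-\varepsilon,\varepsilon),
\end{equation}
compatible with the isomorphisms $T^\ast X_a\simeq T^\ast X_0$. Under this identification $\pi_{X_I/I}^{-1}\omega_{X_I/I}$ corresponds to $\pi_{X_0}^{-1}\omega_{X_0}\boxtimes\CC_I$. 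Hence
\begin{equation}
\CCyc_I(F|_{X_I})\in H^{n+1}_{\Lambda_I}\bigl(T^\ast X_0\times I;\orsh_{T^\ast X_0/X_0}\boxtimes\CC_I\bigr)\simeq H^\BM_{n+1}\bigl(\Lambda_I;\orsh_{T^\ast X_0/X_0}\boxtimes\CC_I\bigr),
\end{equation}
where $n=\dim X_0$ and $\Lambda_I\coloneq\msupp_I(F|_{X_I})$. Here $\Lambda_I$ is closed conic subanalytic of dimension $\le n+1$, because it is the image under the proper map $\beta$ of the conic Lagrangian $\msupp(F|_{X_I})$. By Theorem \ref{thm-restofrelCC}, its slices are the cycles $\CC\CC(F_a)$.

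Next I take $\Lambda_J\coloneq\overline{\Lambda_I}\subset T^\ast X_0\times J$. This set is subanalytic because $\Lambda_I$ is the image of $\msupp(F)\cap\pi_X^{-1}(X_I)$ under a subanalytic map. On $X_I$ this map is $\beta$, which is proper on the microsupport, and the properness of $f$ on $\supp F$ keeps everything over a compact set. For $t\in I$ the slice $\Lambda_{\{t\}}$ is isotropic by the lemma on $\msupp_\Sigma(F)\cap T^\ast X_a$.

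The main obstacle is the slice $\Lambda_{\{0\}}$: I must show it is isotropic, hence of dimension $\le n$. Note that it need not be contained in $\beta(\msupp(F))$, since $\beta$ is not proper near $t=0$. My approach is to view $\Lambda_J$ as a subset of the closure of $\beta(\msupp(F)\cap\{f>0\})$, together with limits of covectors whose $df$-component blows up. After rescaling, these become limits in $\msupp(F)$ of points $(x,\xi)$ with $\xi/|\xi|\to\pm df$-directions.

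I would use the subanalytic curve selection lemma. Take a point $(x_0,\xi_0)\in\Lambda_{\{0\}}$ and a tangent vector to $\Lambda_{\{0\}}$ there. Choose subanalytic arcs $\gamma(s)$ in $\msupp(F)$ over $\{f>0\}$ whose $\beta$-images approach $(x_0,\xi_0)$. Since $\msupp(F)$ is conic Lagrangian, the canonical $1$-form $\alpha$ vanishes on it, so $\alpha_{T^\ast X_0}$ restricted along $\beta\circ\gamma$ differs from $\gamma^\ast\alpha_X=0$ only by the term $\tau\, d(f\circ\gamma)$, where $\tau$ is the $df$-coefficient. A Whitney/Verdier-type estimate, of the form $|\tau|\,|d f|\to0$ relative to the conic structure (the standard argument of \cite[page 43]{GM88} and \cite[Proposition 8.3.12]{KS90}), then shows that $\alpha_{T^\ast X_0}$ vanishes on $\Lambda_{\{0\}}$. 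It follows that $\Lambda_{\{0\}}$ is a closed conic subanalytic isotropic set.

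If this direct argument is hard to control, I would instead use the equivalent characterization that a closed conic subanalytic set is isotropic iff it is contained in a conic Lagrangian subanalytic set. Concretely, I would show $\Lambda_{\{0\}}\subset\msupp(\psi_f^\RR F)\cup(\text{images under }\rho\text{ of }\msupp(F)\text{ over }X_0)$, using the Nadler–Shende type bound on $\msupp(\psi^\RR_f F)$. Once isotropy is established, $(\Lambda_I,\Lambda_J)$ is a family of Lagrangian subsets, the limit morphism of Section \ref{subsec-limit} applies, and $\lim_{a\to+0}\CCyc(F_a)$ is defined.
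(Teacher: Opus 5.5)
Your reduction is correct: realize $\CCyc_I(F|_{X_I})$ as a class supported on a pair $(\Lambda_I,\Lambda_J)$ in $T^\ast X_0\times J$, then check that the slice $\Lambda_{\{0\}}$ is isotropic. You also correctly identify that check as the whole difficulty. But the check is never carried out, so there is a genuine gap.

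\textbf{The first argument does not reach tangent vectors of $\Lambda_{\{0\}}$.} The identity $\beta^\ast\alpha_{T^\ast X_0}=-\tau\,dt$ on $\msupp(F)$ is correct. However, arcs whose $\beta$-images approach a point $p\in\Lambda_{\{0\}}$ only test $\alpha_{T^\ast X_0}$ on their own velocities, that is, on directions of approach, not on vectors tangent to $\Lambda_{\{0\}}$. To test $v\in T_p\Lambda_{\{0\}}$ you must approximate it by tangent vectors $v_k$ of $\Lambda_I$ (Whitney (a) for a stratification of $\Lambda_J$). Then $\alpha_{T^\ast X_0}(v_k)=-\tau_k\,dt(v_k)$, where $dt(v_k)\to0$ but $\tau_k$ may tend to infinity. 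This is exactly the non-properness of $\beta$ at $X_0$, and showing that the product tends to $0$ is the entire content of the lemma. The estimate ``$|\tau|\,|df|\to0$'' is neither formulated precisely nor justified. Moreover, neither the Goresky--MacPherson transversality of generic levels nor the microlocal Bertini--Sard theorem says anything at the special level $t=0$; both only produce good levels $t\neq0$.

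The natural fix is to control $\Lambda_I$ by a stratification $\mathcal S$ of $\supp F_{\{f\geq0\}}$ satisfying a Thom $(a_f)$-type condition along $X_0$. Suppose $\beta(\zeta_k)\to(x;\eta)$ with $\zeta_k\in T^\ast_{S'}X$. Then $\beta(\zeta_k)$ vanishes on $T(S'\cap X_{a_k})$, and by $(a_f)$ the limits of these spaces contain $T_xS$, where $S\subset X_0$ is the stratum containing $x$. Hence $(x;\eta)\in T^\ast_SX_0$.

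\textbf{The fallback is false as stated.} A Nadler--Shende type estimate bounds $\msupp(\psi_f^\RR(F))$ from above by the limit set, so it cannot bound $\Lambda_{\{0\}}$. Here is a counterexample to your proposed inclusion.
\begin{itemize}
\item Take $X=\RR^3_{(x,y,t)}$, $f=t$, and $F=\CC_Z$ with $Z=\{t=x^4+xy^2,\ x\geq0\}$. This is a half-surface with boundary the $y$-axis $S$, and $\{Z\setminus S,S\}$ is a Whitney stratification.
\item Then $F_a=\CC_{C_a}$ with $C_a=\{x^4+xy^2=a,\ x>0\}$. At $(x,x^{3/2})\in C_{2x^4}$ the conormal of $C_a$ is proportional to $5x^{1/2}dx+2dy$, so $(0;dy)\in\Lambda_{\{0\}}$.
\item On the other hand, $\psi_f^\RR(F)\simeq\CC_S$ near $0$, since the right Milnor fibers are arcs.
\item Also $\msupp(F)\cap T^\ast_0X\subset\{\xi_x\,dx+\xi_t\,dt\}$, whose $\rho$-image lies in $\RR\,dx$.
\end{itemize}
So neither piece of your proposed bound contains $(0;dy)$.

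\textbf{How the paper proceeds.} It never analyzes $\overline{\Lambda_I}$ directly. It enlarges the support to $\Xi=\bigsqcup_{a\in J}\Xi_a\times\{a\}$ with $\Xi_0=\bigsqcup_{S\in\mathcal S_0}T^\ast_SX_0$. The slice at $0$ is then a union of conormal bundles, hence isotropic by construction. The work moves to two places:
\begin{itemize}
\item The closure statement $\overline{\Xi_I}\cap\{t=0\}\subset\Xi_0$. This is the $(a_f)$-type regularity above. The paper attributes it to the Whitney condition, but the example shows $\mathcal S$ may have to be refined (here, by adding the stratum $\{0\}$).
\item The subanalyticity of $\Xi$ (Lemma \ref{ext-lem}, Appendix \ref{app-proof}).
\end{itemize}
The subanalyticity issue also affects your $\Lambda_J$. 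Properness of $\beta$ over $X_I$ only gives subanalyticity of $\Lambda_I$ in $T^\ast X_0\times I$. Near $t=0$ you must use conicness to pass to a relatively compact subset of a sphere bundle, as the paper does.
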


\begin{proof}
The problem being local, we may assume that 
$X=X_0 \times (-\varepsilon, \varepsilon), 
X_I =X_0\times I$
and hence $T^\ast(X/(-\varepsilon, \varepsilon))=T^\ast X_0 \times (-\varepsilon, \varepsilon),
T^\ast(X_I/I) =T^\ast X_0 \times I$ and $\omega_{X_I/I} =\omega_{X_0} \boxtimes k_I$.
We thus can regard $\C \C_I (F|_{X_I})$
as a family of Lagrangian cycles $\C \C(F_a)$
in $T^\ast X_a \simeq T^\ast X_0 \quad (a\in I)$ 
parameterized by $I=(0, \varepsilon)$. 
Let us set $J\coloneq [0, \varepsilon) \subset \RR$
and construct a family $\{ \Xi_a \}_{a\in J}$
of closed conic subanalytic Lagrangian subsets $\Xi_a 
\subset T^\ast X_a \simeq T^\ast X_0 \quad (a\in J)$
parameterized by $J=[0, \varepsilon)$ 
such that the support of $\C \C_I(F|_{X_I})$ in 
$T^\ast (X_I/I) =T^\ast X_0 \times I$
is contained in its total space $\Xi \coloneq \bigsqcup_{a\in J} \Xi_a \times \{ a\} 
\subset T^\ast X_0 \times J$.
First, set $\{ f \geq 0 \} \coloneq X_0 \times J \subset X$ and 
let $\mathcal{S} =\{ S\subset X \}$ be a subanalytic 
Whitney stratification of $\supp F_{\{ f \geq 0 \}} \subset 
X=X_0\times (-\varepsilon, \varepsilon)$
such that
\begin{equation}
    \mathrm{SS} (F_{\{ f \geq 0 \}}) \subset \bigsqcup_{S\in \mathcal{S}} T^\ast_S X  
\end{equation} 
and for a subset $\mathcal{S}_0 \subset \mathcal{S}$
we have $\supp F_{\{ f \geq 0 \}} \cap 
(X_0\times \{ 0\}) =\bigsqcup_{S\in \mathcal{S}_0} S$.
Replacing $\varepsilon >0$ if necessary, 
we may assume that for any $a \in I=(0, \varepsilon)$
the smooth hypersurface $X_a=f^{-1}(a)\subset X$ 
intersects all the strata $S\in \mathcal{S}$ in $\mathcal{S}$
such that $S \subset \{ f>0 \}$
transversally (see e.g. Goresky-MacPherson \cite[p.43]{GM88}).
For $a \in J=[0, \varepsilon)$
we set 
\begin{equation}
    \Xi_a \coloneq
    \begin{cases}
        \displaystyle \bigsqcup_{S\in \mathcal{S}} T^\ast _{S\cap X_a} 
(X_a) \quad (a\in I=(0, \varepsilon)), \\[20pt]
        \displaystyle \bigsqcup_{S\in \mathcal{S}_0} T^\ast _S X_0 \quad (a=0).
    \end{cases}
\end{equation}
Note that for the natural morphism $\beta : T^*X_I \longrightarrow T^\ast (X_I/ I)$ 
we have 
\begin{equation}
\beta \Bigl( \bigsqcup_{S \in \mathcal{S} \setminus \mathcal{S}_0} T^*_SX \Bigr) 
=
\bigsqcup_{a\in I} \Xi_a \times \{ a \} \quad \subset T^\ast (X_I/I). 
\end{equation}
Then we see that $\mathrm{SS}_I (F|_{X_I}) \subset T^\ast (X_I/ I)$
is contained in $\bigsqcup_{a\in I} \Xi_a \times \{ a \} \subset T^\ast (X_I/I)$. 
Moreover, by the Whitney condition of $\mathcal{S}$ 
we see that $\Xi_{a} \longrightarrow \Xi_0$ locally uniformly 
(in the projective sense) 
in $T^\ast X_a \simeq T^\ast X_0$ as
$a \longrightarrow +0$. 
In particular, the closure of $\bigsqcup_{a\in I} \Xi_a \times \{ a \}$ 
in $T^\ast (X/(-\varepsilon, \varepsilon))$ intersects 
$T^*X_0 \times \{ 0 \} \simeq T^*X_0$ only in the subset 
$\Xi_0 \subset T^*X_0$. 
Namely $\Xi \coloneq \bigsqcup_{a\in J} \Xi_a \times \{ a \} \subset T^\ast X_0 \times J$
is a closed subset of the relative cotangent bundle 
$T^\ast (X/(-\varepsilon, \varepsilon))\simto T^\ast X_0 \times (-\varepsilon, \varepsilon)$. 
Moreover we have following result, whose proof 
will be given in Appendix \ref{app-proof}. 
\begin{lemma}\label{ext-lem} 
The closed subset 
$\Xi \subset T^\ast (X/(-\varepsilon, \varepsilon))$ 
of $T^\ast (X/(-\varepsilon, \varepsilon))\simto 
T^\ast X_0 \times (-\varepsilon, \varepsilon)$
is subanalytic. 
\end{lemma} 
We thus have constructed a family $\{ \Xi_a \}_{a\in J} $
of closed conic subanalytic Lagrangian subsets $\Xi_a \subset T^\ast X_a 
\simto T^\ast X_0$ satisfying the required conditions and 
hence can consider the limit $\lim_{a\to +0} \C \C(F_a) $ 
in the sense of Section \ref{subsec-limit}.
\end{proof}

\begin{theorem}\label{thm-limformula}
    In the situation as above, we have 
    \begin{equation}\label{eq-limformula}
        \C \C (\psi_f^{\RR} (F)) = \lim_{a\to +0} \C \C (F_a)
    \end{equation}
    in $T^\ast X_0$.
\end{theorem}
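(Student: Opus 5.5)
Both sides of \eqref{eq-limformula} are Lagrangian cycles supported in the closed conic subanalytic isotropic subset $\Xi_0 = \bigsqcup_{S\in\mathcal{S}_0} T^\ast_S X_0$. Here I use that $\msupp(\psi_f^{\RR}(F)) \subset \Xi_0$, which follows from the Nadler--Shende type estimate, or directly from $\psi_f^{\RR}(F)=i_X^{-1}\rsect_{\{f>0\}}F$ and the stratification $\mathcal{S}$ of $F_{\{f\ge 0\}}$. A cycle in $H^0_{\Xi_0}(T^\ast X_0;\pi_{X_0}^{-1}\omega_{X_0})$ is determined by its multiplicities along the smooth Lagrangian pieces $T^\ast_S X_0$ ($S\in\mathcal{S}_0$). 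So the plan is to compare multiplicities stratum by stratum. For this I will use the intersection-number formalism of Proposition \ref{prop-intersection} and Example \ref{ex-Lagintersect}, together with the index-type formula of Kashiwara--Schapira: the multiplicity of $\CCyc(G)$ at a generic point $p\in T^\ast_SX_0$ equals, up to the standard sign, $\chi$ of the local Morse group $\rsect_{\{\varphi\ge 0\}}(G)_{x}$ for a function $\varphi$ with $d\varphi(x)=p$ meeting the Lagrangian transversally. Equivalently, it is the intersection number $\#(\CCyc(G)\cap \Gamma_{d\varphi})$ localized near $p$ (cf. \cite[Section 9.5]{KS90}).

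Fix $S\in\mathcal{S}_0$, a generic $p=(x,\xi)\in T^\ast_SX_0$ with $\xi\ne0$, and a real analytic $\varphi$ on $X_0$ with $d\varphi(x)=p$ whose graph $\Gamma_{d\varphi}$ meets $\Xi_0$ transversally, and only at $p$, inside a small compact neighborhood. Extend $\varphi$ to $\tilde\varphi=\varphi\circ \mathrm{pr}$ on $X=X_0\times(-\varepsilon,\varepsilon)$. Then the family $\Gamma_{d\varphi}\times J$ in $T^\ast X_0\times J$ is a constant family $C'_J$. Let $C_J$ be the extension of $\CCyc_I(F|_{X_I})$ with $\phi^\ast(i_0)C_J=\lim_{a\to+0}\CCyc(F_a)$, supported in $\Xi$. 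Because $\Xi_a\to\Xi_0$ locally uniformly (the Whitney condition used in Lemma \ref{ex-lem}) and $\Gamma_{d\varphi}$ is transversal to $\Xi_0$, the intersection $\Xi\cap(\Gamma_{d\varphi}\times J)$, restricted to a small neighborhood of $p$, is proper over $J$ after shrinking $\varepsilon$. Example \ref{ex-Lagintersect} together with Theorem \ref{thm-restofrelCC} then gives
\[
\#\bigl((\lim_{a\to+0}\CCyc(F_a))\cap\Gamma_{d\varphi}\bigr)_p=\#\bigl(\CCyc(F_a)\cap\Gamma_{d\varphi|_{X_a}}\bigr)_{\text{near }x}
\]
for small $a>0$. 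The right-hand side equals the sum of local Euler characteristics of $\rsect_{\{\varphi\ge\varphi(y)\}}(F_a)_y$ over the finitely many critical points $y\in X_a$ of $\varphi|_{\operatorname{supp}F_a}$ near $x$. By additivity, this is $\chi$ of the relative cohomology $\rsect(B\cap X_a\cap\{\varphi<\varphi(x)+\delta\},\,B\cap X_a\cap\{\varphi<\varphi(x)-\delta\};F)$ for a small ball $B$.

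It remains to identify this with the Morse group of $\psi_f^{\RR}(F)$ at $p$, namely $\chi\bigl(\rsect_{\{\varphi\ge\varphi(x)\}}(\psi_f^{\RR}F)_x\bigr)$. Here I use the Milnor-fiber lemma, which gives $\psi_f^{\RR}(F)_y\simeq\rsect(M^{\RR}_{f,y};F)$, and the fiber-bundle structure of $B\cap f^{-1}((0,\eta))\to(0,\eta)$. I then apply the non-characteristic deformation lemma \cite[Proposition 2.7.2]{KS90} to the family of open sets $B\cap\{0<f<\eta\}\cap\{\varphi<c\}$, which lets me replace the $f>0$ region by the single fiber $X_a$. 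This yields
\[
\rsect_{\{\varphi\ge\varphi(x)\}}(\psi_f^{\RR}F)_x\simeq\rsect\bigl(B\cap X_a\cap\{\varphi<\varphi(x)+\delta\},B\cap X_a\cap\{\varphi<\varphi(x)-\delta\};F\bigr).
\]
Comparing multiplicities stratum by stratum, including the zero-section strata (handled by taking $\xi=0$ and the usual $\chi$ of stalks), proves \eqref{eq-limformula}.

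The main obstacle will be this last deformation step. One must check that on the region $\{0<f<\eta\}\cap B$ the pair $(f,\varphi)$ has no critical values for $F$ other than those accounted for near $x$. In other words, no covector of $\msupp(F)$ lies in the span of $df$ and $d\varphi$ near the boundary $\partial B$ or near $X_0$ away from $p$. I would first try to derive this from the Whitney condition on $\mathcal{S}$ together with the transversality of $\Gamma_{d\varphi}$ to $\Xi_0$, arguing by contradiction with a curve-selection argument. This mirrors the Schmid--Vilonen argument, but I would phrase it in microsupport terms.
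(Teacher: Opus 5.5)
Your overall strategy matches the paper's. You compare multiplicities at a generic point $p_\alpha$ of each top-dimensional piece of $\Lambda_0$ using a Morse function $\varphi$ with $\rmd\varphi(q_\alpha)=p_\alpha$. You transport the intersection number with $\Lambda^\varphi$ from the limit cycle to $\CCyc(F_a)$ via Proposition~\ref{prop-intersection} and Example~\ref{ex-Lagintersect}, and evaluate it at level $a$ by the index formulas of \cite[Section 9.5]{KS90}. Finally you identify the result with the Morse group of $\psi_f^{\RR}(F)$.

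That last identification is the real content of the theorem, and it is not proved. You state
\[
\rsect_{\{\varphi\ge\varphi(x)\}}(\psi_f^{\RR}F)_x\simeq\rsect\bigl(B\cap X_a\cap\{\varphi<\varphi(x)+\delta\},\,B\cap X_a\cap\{\varphi<\varphi(x)-\delta\};F\bigr)
\]
and then list its verification as the ``main obstacle''. The route you sketch is not adequate as it stands. The Milnor-fiber lemma is a stalkwise statement and does not compute a local cohomology group $\rsect_{\{\varphi\ge c\}}(\cdot)_x$. The non-characteristic condition you hope to obtain by curve selection is a joint condition on $f$, $\varphi$ and $\partial B$. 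Besides transversality at $p$, it involves critical points of $f$ itself for $F$ near $X_0$, the strata of $\partial B$, and the level sets $\{\varphi=c\pm\delta\}$. You give no argument for any of these.

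The missing idea is the order in which the paper makes its choices, which removes any joint $(f,\varphi)$ analysis.

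\textbf{Step 1.} Work only on $X_0$, using $\msupp(\psi_f^{\RR}(F)_{\overline{B(r_1)}})\subset\Lambda_0'$. The microlocal Bertini--Sard theorem for $\rho$ and $\varphi$, together with \cite[Lemma 3.5.1]{GM88} and microlocal Morse theory, gives $\rsect_{\{\varphi\ge0\}}(\psi_f^{\RR}F)_{q_\alpha}\simeq\rsect(X_0;\psi_f^{\RR}(F)_{Z})$ with $Z=\overline{B(r_1)}\cap\{-\varepsilon_0<\varphi\le\varepsilon_0\}$. From here on $r_1$ and $\varepsilon_0$ are fixed.

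\textbf{Step 2.} This group equals $\rsect(X_0;i^{-1}\SG)$ for the single compactly supported $\RR$-constructible sheaf $\SG=(G')_{\{-\varepsilon_0<\tl{\varphi}\le\varepsilon_0\}}$ on $X$. It is built from $G=(\rsect_{\{f>0\}}F)_{X_0\times[-\delta,\delta]}$, with no Milnor fibers involved.

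\textbf{Step 3.} Apply the microlocal Bertini--Sard theorem to the one function $t$ and $\msupp(\SG)\cup\msupp(\SG)^a$. This gives, for free, some $\delta_0>0$ such that $t$ has no critical values for $\SG$ in $(0,\delta_0]$. The non-characteristic deformation lemma then yields $\rsect(X_0;i^{-1}\SG)\simeq\rsect(X_0\times[0,a];\SG)\simeq\rsect(X_a;\SG|_{X_a})$. All boundary and ``away from $p$'' phenomena are absorbed into $\msupp(\SG)$, whose critical values for $t$ are automatically isolated.

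\textbf{Step 4.} Only now is $a\ll\delta_0$ chosen, so that $\rmd\varphi(\overline{B(r_1)}\cap\{|\varphi|\le\varepsilon_0\})$ meets $\Lambda'_a$ only near $p_\alpha$. This guarantees that the index computation at level $a$ gives $\chi$ of exactly the group from Step 3.

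With this device your outline becomes the paper's proof; without it, the central step remains open.
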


\begin{proof}
Our proof is similar to that of Schmid-Vilonen \cite[Theorem 4.2]{SV96}. 
As in the proof of Lemma \ref{ex-lem}, we may assume that 
$X=X_0\times(-\varepsilon,\varepsilon)$. Then we 
take $\delta>0$ such that $0<\delta<\varepsilon$ and set 
 $G\coloneq( \rsect_{\{f>0 \}}(F) )_{X_0\times[-\delta,\delta]}\in \BDrc(X)$ 
so that we have $\psi_f^{\RR} (F) \simeq G|_{X_0}$. 
Let $\cS$ be a subanalytic Whitney stratification of 
$\supp G \subset X=X_0\times(-\varepsilon,\varepsilon)$ such that
\begin{equation}
    \msupp(G)\subset \Lambda\coloneq \bigsqcup_{S\in\cS}T_S^\ast X
\end{equation}
and $\supp G \cap (X_0\times \{0\}) \simeq \supp G \cap X_0$ 
is a union of some strata in $\cS$. 
Then we set $\cS_0\coloneq\Set*{S\in\cS}{S\subset X_0\times\{0\}}\subset \cS$ and define
$\Lambda_a\subset T^\ast X_a$ ($0 \leq a \ll\delta$) by 
\begin{equation}
    \Lambda_a \coloneq
    \begin{cases}
        \displaystyle \bigsqcup_{S\in \mathcal{S}} T^\ast _{S\cap X_a} 
(X_a) \quad ( 0<a\ll\delta ), \\[20pt]
        \displaystyle \bigsqcup_{S\in \mathcal{S}_0} T^\ast _S X_0 \quad (a=0)
    \end{cases}
\end{equation}
so that we have $\msupp(F_a) \subset \Lambda_a$ for $0<a\ll\delta$. 
By taking a suitable subdivision of the Whitney stratification $\cS$, 
we may assume that for the subset $\cS_0\subset\cS$ we have 
\begin{equation}
    \msupp(\psi_f^\RR(F)) =  \msupp ( G|_{X_0} )
\subset \Lambda_0 =\bigsqcup_{S\in\cS_0}T_S^\ast X_0.
\end{equation}
Let us fix  $0<\delta_1\ll\delta$. Then, 
as in the proof of Lemma \ref{ext-lem}, we 
can show that $\bigsqcup_{0\leq a\leq\delta_1} 
\Lambda_a\times{\{a\}} \subset T^\ast(X/(-\varepsilon, \varepsilon))$ 
is a closed subanalytic subset of $T^\ast(X/(-\varepsilon, \varepsilon))$. 
Let $\SL$ be its subanalytic Whitney stratification 
consisting of connected strata,  
such that for any $S\in\cS_0$ the conormal bundle 
$T_S^\ast X_0 \subset \Lambda_0$ is a union of some strata in it 
and decompose $\Lambda_0$ into 
some of its strata as follows: 
\begin{equation}
\Lambda_0 =\bigsqcup_{\alpha\in A} \Theta_\alpha. 
\end{equation}
Then for any $\alpha\in A$ there exists a (unique) stratum $S\in\cS_0$ in $\cS_0$
such that $\Theta_\alpha\subset T_S^\ast X_0$ and we denote it by $S_\alpha$. 
For $\alpha\in A$ such that $\dim \Theta_\alpha=\dim X_0=\dim T_{S_\alpha}^\ast X_0$, 
let $[\Theta_\alpha]$ be the Lagrangian cycle obtained by restricting 
the one $[T_{S_\alpha}^\ast X_0]$ 
(for the definition, see \cite[Example 9.3.4]{KS90}) 
to an open neighborhood of $\Theta_\alpha$ in $T^\ast X_0$. 
Then it suffices to show that the multiplicities of $[\Theta_\alpha]$ in the both
sides of (\ref{eq-limformula}) coincide each other for such $\alpha\in A$. 
Fix such $\alpha\in A$ and choose a reference point 
$p_\alpha\in\Theta_\alpha\subset T_{S_\alpha}^\ast X_0$ of $\Theta_\alpha$. 
The case $\dim S_\alpha=\dim X_0$ being trivial, here we consider 
only the case $\dim S_\alpha<\dim X_0$ and take the point $p_\alpha$ from 
$T_{S_\alpha}^\ast X_0\setminus T_{X_0}^\ast X_0$.
Let $\pi_{X_0}\colon T^\ast X_0\longrightarrow X_0$ be the projection and set 
$q_\alpha\coloneq \pi_{X_0}(p_\alpha)\in S_\alpha\subset X_0$.
Next take a real analytic function $\varphi$ defined on a neighborhood $W$ of 
$q_\alpha$ in $X_0$ satisfying the following conditions:
\begin{equation}\label{eq-conditions}
\left\{
\begin{aligned}
    \textrm{(i)}\ &\varphi(q_\alpha)=0, \quad  
    \rmd\varphi(q_\alpha)=p_\alpha \in \Theta_\alpha \subset T_{S_\alpha}^\ast X_0, \\
    \textrm{(ii)}\ &\varphi\vbar_{S_\alpha}\colon S_\alpha\longrightarrow \RR
    \textrm{ has a Morse (non-degenerate) critical point at } q_\alpha\in S_\alpha \\ 
    & \textrm{ with Morse index 0 i.e. the Hesse matrix of }
    \varphi\vbar_{S_\alpha} \textrm{ at } q_\alpha\in S_\alpha 
    \textrm{ is positive definite.}
\end{aligned}
\right.
\end{equation}
Then by \cite[Lemma 7.2.2]{KS85} we see that the (not necessarily conic)
Lagrangian submanifold
\begin{equation}
    \Lambda^\varphi\coloneq \rmd\varphi(W)
    =\Set*{\rmd\varphi(x)}{x\in W} \quad \subset T^\ast X_0
\end{equation}
of $T^\ast X_0$ intersects $\Theta_\alpha\subset T^\ast X_0$ transversally.
Since $\Lambda_a\longrightarrow \Lambda_0$ in $T^\ast X_a$ 
as $a\longrightarrow+0$ and any statum $L$ of 
the Whitney stratification $\SL$ 
such that $L \subset T^*(X_I/I) \subset 
T^\ast(X/(-\varepsilon, \varepsilon))$ and $\Theta_{\alpha} \subset 
\overline{L}$ is of dimension $\dim X_0 +1$, we see that 
$\Lambda^{\varphi}$ intersects $\Lambda_a\subset T^\ast X_a 
\simeq T^*X_0$ transversally for
all sufficiently small $0<a\ll 1$. 
Let $\pt$ be the one point space.
Then by \cite[Proposition 6.6.1(ii)]{KS90} there exists 
$M^\alpha\in \BDrc(\pt)$ such that for the object 
$M_{S_\alpha}^\alpha\coloneq k_{S_\alpha}\otimes_{k_{X_0}} 
M_{X_0}^\alpha\in \BDrc(X_0)$ of $\BDrc(X_0)$ we have an isomorphism 
$\psi_f^\RR(G)\simeq M_{S_\alpha}^\alpha$ in the localized category
$\BDC(X_0;p_\alpha)$ at the point $p_\alpha\in T^\ast X_0$ 
(for the definition, see \cite[Definition 6.1.1]{KS90}).
This implies that the multiplicity $m_\alpha$ of $[\Theta_\alpha]$ in 
$\CCyc(\psi_f^\RR(G))$ is equal to 
\begin{equation}
    \chi(M^\alpha) \coloneq \sum_{j\in\ZZ}(-1)^j \dim_k H^j(M^\alpha) \quad \in\ZZ 
\end{equation}
(see \cite[(9.4.8) and Proposition 9.4.5]{KS90}). 
It follows also from the isomorphism $\psi_f^\RR(F)\simeq M_{S_\alpha}^\alpha$ 
in $\BDC(X_0;p_\alpha)$ that we have isomorphisms 
\begin{equation}
    \rsect_{\{\varphi\geq0\}} (\psi_f^\RR(F))_{q_\alpha}
    \simeq \rsect_{\{\varphi\geq0\}} (M_{S_\alpha}^\alpha)_{q_\alpha}
    \simeq M^\alpha,
\end{equation}
where we used the fact that the stratum $S_\alpha\subset X_0$ is contained in 
$\{\varphi\geq0\} \subset X_0$ on a neighborhood of $q_\alpha\in S_\alpha$ in $X_0$.
Indeed, we can take a real analytic coordinate $x=(x_1,\dots,x_n)$ $(n
\coloneq \dim X_0)$ 
of $X_0$ on a neighborhood of $q_\alpha\in S_\alpha\subset X_0$ such that 
$q_\alpha=\{x=0\}$, $S_\alpha=\{x_1=\dots=x_d=0\}$ $(d\coloneq\codim_{X_0} S_\alpha)$,
$p_\alpha=(0,dx_1)\in T_{S_\alpha}^\ast X_0$ and
\begin{equation}
    \varphi(x)=x_1+(x_{d+1}^2+\cdots+x_n^2).
\end{equation}
By this coordinate, we define a real analytic function $\rho$ on 
a neighborhood of $q_\alpha\in S_{\alpha} \subset X_0$ by
\begin{equation}
    \rho(x)\coloneq x_1^2+x_2^2+\cdots+ x_n^2
\end{equation}
and for $r>0$ set $B(r)\coloneq \{\rho<r\}\subset X_0$.
Then, by applying the microlocal Bertini-Sard theorem \cite[Proposition 8.3.12]{KS90}
to the microsupport of the $\RR$-constructible sheaf 
$\rsect_{\{\varphi\geq0\}}(\psi_f^\RR(F))$ and $\rho$ and using the non-characteristic
deformation lemma \cite[Proposition 2.7.2]{KS90}, we see that there exists $0<r_0\ll1$
such that we have an isomorphism 
\begin{equation}
    \rsect\bigl(B(r); \rsect_{\{\varphi\geq0\}}(\psi_f^\RR(F)) \bigr)
    \simto\rsect_{\{\varphi\geq0\}}(\psi_f^\RR(F))_{q_\alpha} 
\end{equation}
for any $0< r\leq r_0$.
We shall modify the left hand side as that of 
\eqref{ex-eqan} below. 
For this purpose, 
we define a real analytic function $\tl{\rho}$ on 
$X=X_0\times (-\varepsilon,\varepsilon)$ by 
\begin{equation}
    \tl{\rho}(x,t)\coloneq \rho (x) \quad 
    \bigl((x,t)\in X= X_0\times(-\varepsilon,\varepsilon) \bigr).
\end{equation}
Then, by applying the microlocal Bertini-Sard theorem \cite[Proposition 8.3.12]{KS90}
to the closed conic subanalytic isotropic subset 
\begin{equation}
    \Lambda = \bigsqcup_{S\in\cS}T_S^\ast X \quad \subset T^\ast X
\end{equation}
and $\tl{\rho}\colon X\longrightarrow\RR$, we see that there exists 
$0<r_1\ll1$ such that for any $0<r\leq r_1$ the smooth hypersurface 
$\tl{\rho}^{-1}(r)\subset X$ intersects all the strata $S\in\cS$ in $\cS$ transversally.
Here we used the fact that $\tl{\rho}\colon X = X_0\times(-\varepsilon,\varepsilon)
\longrightarrow\RR$ is 
proper on $\pi_X(\Lambda) \subset X_0\times [-\delta,\delta]
\subset X=X_0\times (-\varepsilon,\varepsilon)$.
We thus obtain a Whitney stratification 
\begin{equation}
    \cS^\prime \coloneq
    \Set[\big]{S\cap \{\tl{\rho}=r_1\}}{S\in\cS} \cup 
    \Set[\big]{S\cap \{\tl{\rho}<r_1\}}{S\in\cS}
\end{equation}
of $\supp G \cap ( \overline{B(r_1)}\times[-\delta,\delta])$ and set 
$\cS_0^\prime\coloneq\Set*{S\in\cS^\prime}{S\subset \overline{B(r_1)}\times\{0\}}$.
Then we can easily check that the $\RR$-constructible sheaf 
$G^\prime\coloneq G_{\{ \tl{\rho} \leq r_1 \}} 
\simeq 
G_{\overline{B(r_1)}\times (-\varepsilon,\varepsilon)}
\in \BDrc(X)$
on $X=X_0\times (-\varepsilon,\varepsilon)$ satisfies the condition
\begin{equation}
    \msupp(G^\prime) \subset \Lambda^\prime
    \coloneq\bigsqcup_{S\in\cS^\prime}T_S^\ast X.
\end{equation}
For $0\leq a\ll\delta$ we set 
\begin{equation}
    \Lambda_a^\prime\coloneq
    \left\{
    \begin{aligned}
        \displaystyle& \bigsqcup_{S\in\cS^\prime} T_{S\cap X_a}^\ast (X_a) 
        \quad (a>0), \\ 
        \\
        \displaystyle& \bigsqcup_{S\in\cS_0^\prime} T_S^\ast (X_0) 
        \quad (a=0)
    \end{aligned}
    \right.
\end{equation}
so that we have $\Lambda_a^\prime\longrightarrow\Lambda_0^\prime$ in 
$T^\ast X_a\simeq T^\ast X_0$ as $a\longrightarrow+0$.
Moreover, we can check also the condition 
\begin{equation}
\msupp(\psi_f^\RR(F)_{\overline{B(r_1)}})
= \msupp ( G^{\prime} |_{X_0}) 
\quad \subset \Lambda_0^\prime=\bigsqcup_{S\in\cS_0^\prime} T_S^\ast X_0.
\end{equation}
Then by applying the microlocal Bertini-Sard theorem \cite[Proposition 8.3.12]{KS90}
to the closed conic subanalytic isotropic subset 
\begin{equation}
    \Lambda_0^\prime=\bigsqcup_{S\in\cS_0^\prime} T_S^\ast X_0 
    \quad \subset T^\ast X_0
\end{equation}
and the real analytic function $\varphi\colon W\longrightarrow\RR$ 
(here we assume that $\overline{B(r_1)}\subset W$), we see that there exists 
$0<\varepsilon_0\ll1$ such that for any $s\in\RR$ satisfying the condition 
$0<\abs{s}\leq\varepsilon_0$ the smooth hypersurface $\varphi^{-1}(s)\subset W$
intersects all the strata $S\in\cS_0^\prime$ in $\cS_0^\prime$ transversally.
Moreover, the smooth hypersurface $\varphi^{-1}(0)\subset W$ intersects any stratum
$S\in \cS_0^\prime$ in $\cS_0^\prime$ other than the one 
$S_\alpha^\prime\coloneq S_\alpha\cap \{\rho <r_1\}\in \cS_0^\prime$
transversally (as in Schmid-Vilonen \cite[(2.6)]{SV96}, here  
we used \cite[Lemma 3.5.1]{GM88}). 
This implies that we have
\begin{equation}
    \rmd\varphi\bigl(\{\abs{\varphi}\leq\varepsilon_0\}\bigr) \cap
    \msupp(\psi_f^\RR(F)_{\overline{B(r_1)}})
    \subset
    \rmd\varphi\bigl(\{\abs{\varphi}\leq\varepsilon_0\}\bigr) \cap
    \Lambda_0^\prime \subset \{p_\alpha\}.
\end{equation}
Then by applying the microlocal Morse theory of Kashiwara-Schapira in \cite{KS90}
to the Morse function $\varphi\colon W\longrightarrow\RR$ and 
the $\RR$-constructible sheaf 
$\psi_f^\RR(F)_{\overline{B(r_1)}}\in\BDrc(X_0)$ we obtain isomorphisms
\begin{equation}\label{ex-eqan} 
    \rsect\bigl(X_0; \psi_f^\RR(F)_{\overline{B(r_1)}\cap \{
-\varepsilon_0<\varphi\leq\varepsilon_0\}} \bigr)
    \simeq \rsect_{\{\varphi\geq0\}}\bigl( \psi_f^\RR(F) \bigr)_{q_\alpha}
    \simeq M^\alpha
\end{equation}
(see Figure 1.). 
\begin{figure}[t]
\centering
\begin{tikzpicture}
\begin{scope}[xshift=-7cm,scale=0.9]
    \coordinate(O) at (0,0);
    \coordinate(A) at (-2,3);
    \coordinate(B) at (-2,-3);
    \coordinate(X1) at (-1,2);
    \coordinate(X2) at (-0.5,-1);
    \coordinate(C) at (0.2,2.8);
    \coordinate(D) at (0.5,-2.8);
    \coordinate(Y1) at (1.5,1.5);
    \coordinate(Y2) at (1.5,-1);
    \node[below] at (B) {$\varphi^{-1}(-\varepsilon_0)$};
    \node[below] at (D) {$\varphi^{-1}(\varepsilon_0)$};
    
    \coordinate (L) at (3,2);
    \coordinate (P) at (1.6,1.25);
    \draw[->] (L) to[bend right=20] (P);
    \node[fill=white, inner sep=3pt] at (L) {$\overline{B(r_1)}$};
    
    \node[below,font=\footnotesize,align=left] at (1,-3.9) {Figure 1.  The 
intersection $\overline{B(r_1)}\cap\{-\varepsilon_0<\varphi\leq\varepsilon_0\}$ \\
    is shown in gray.};
    
    \clip (3,3)--(3,-3)--(-3,-3)--(-3,3)--(3,3);
    \begin{scope}
    \clip (O) circle[radius=2];
    \clip (A) .. controls (X1) and (X2) .. (B) -- (3,-3) -- (3,3) -- cycle;
    \clip (C) .. controls (Y1) and (Y2) .. (D) -- (-3,-3) -- (-3,3) -- cycle;
    \fill[black!15] (-3,-3) rectangle (3,3);
    \end{scope}

    \draw (O) circle[radius=2];
    \draw[densely dashed] (A) .. controls (X1) and (X2) .. (B);
    \draw (C) .. controls (Y1) and (Y2) .. (D);
    \fill[black] (O) circle[radius=0.06];
    \node[below] at (O) {$0$};
\end{scope}

\begin{scope}[xshift=1cm,yshift=-0.5cm,scale=0.75]
    \node[above] at (-3,3) {$T^\ast X_0$};
    \node[left] at (-3,1.7) {$\Lambda_0$};
    \node[left] at (-3,0.3) {$\Lambda_0$};
    \node[below] at (1.8,-3) {$\Lambda_0$};
    \node[right] at (3,-1.8) {$\Lambda_0$};

    \node[below,font=\footnotesize,align=left] at (1.5,-4) {Figure 2.  The subsets 
$\mathrm{d}\varphi(\{\abs{\varphi}\leq\varepsilon_0\}\cap \overline{B(r_1)})$ 
\\ and $\Theta_\alpha$ only intersect at $p_\alpha$ in a neighborhood \\ of $p_\alpha$.};
    
    \begin{scope}
    \clip (3,3)--(3,-3)--(-3,-3)--(-3,3)--(3,3);
    \coordinate(O) at (0,0);
    \coordinate(A) at (-1,1);
    \coordinate(B) at (1,-1);
    \coordinate(Fu) at (-2,5);
    \coordinate(Fl) at (-5,2.2);
    \coordinate(Fld) at (-5,-0.5);
    \coordinate(Fd) at (2,-5);
    \coordinate(Fr) at (5,-2.2);
    
    \coordinate(C) at (-1.5,-4);
    \coordinate(D) at (3.5,2);
    \coordinate(Cp1) at (-1.3,-0.4);
    \coordinate(Cp2) at (1.2,1.7);
    
    \draw[thick] (A) -- (B);
    \draw[semithick] (A) -- (Fu);
    \draw[semithick] (A) -- (Fl);
    \draw[semithick] (A) -- (Fld);
    \draw[semithick] (B) -- (Fr);
    \draw[semithick] (B) -- (Fd);
    \draw[semithick] (C) .. controls (Cp1) and (Cp2) .. (D);

    \draw[decorate,thick,decoration={name=zigzag, amplitude=0.35mm, segment length=1.2mm}] (A) -- (B);
    \fill[white] (C) circle[radius=1.7];
    \fill[white] (D) circle[radius=1.5];
    \fill[white] (O) circle[radius=0.12];
    \fill[black] (O) circle[radius=0.08];
    \fill[white] (A) circle[radius=0.1];
    \fill[white] (B) circle[radius=0.1];
    \draw (O) circle[radius=0.12];
    \draw (A) circle[radius=0.1];
    \draw (B) circle[radius=0.1];

    \coordinate(Ap1) at (-5,-1);
    \coordinate(Xp1) at (-1.25,0.75);
    \coordinate(Ap2) at (0,-0.45);
    \coordinate(Xp2) at (0.5,-1);
    \coordinate(Xp3) at (0.8,-1);
    \coordinate(Ap3) at (1.5,-5);
    \draw[densely dashed] (Ap1).. controls (Xp1) .. (Ap2);
    \draw[densely dashed,dash phase=3pt] (Ap2) .. controls (Xp2) and (Xp3) .. (Ap3);

    \coordinate(Bp1) at (-1,5);
    \coordinate(Yp1) at (-0.9,2);
    \coordinate(Yp2) at (-0.75,1);
    \coordinate(Bp2) at (0.5,-0.05);
    \coordinate(Yp3) at (2,-1.2);
    \coordinate(Yp4) at (3,-1);
    \coordinate(Bp3) at (5,-1.6);
    \draw[densely dashed] (Bp1).. controls (Yp1) and (Yp2) .. (Bp2);
    \draw[densely dashed,dash phase=2pt] (Bp2).. controls (Yp3) and (Yp4) .. (Bp3);

    \coordinate(Cp1) at (-0.5,5);
    \coordinate(Zp1) at (-0.3,2);
    \coordinate(Zp2) at (-0.2,1);
    \coordinate(Cp2) at (1,0.15);
    \coordinate(Zp3) at (2,-0.6);
    \coordinate(Zp4) at (3,-0.5);
    \coordinate(Cp3) at (5,-1);
    \draw[densely dashed]  (Cp1).. controls (Zp1) and (Zp2) .. (Cp2);
    \draw[densely dashed,dash phase=3pt]  (Cp2).. controls (Zp3) and (Zp4) .. (Cp3);
    \end{scope}
    
    \coordinate(S1) at (3.5,3);
    \coordinate(S2) at (2.1,1.7);
    \draw[->] (S1) to [bend right=30] (S2);
    \node[fill=white, inner sep=3pt] at (S1) {$\mathrm{d}
\varphi(\{\abs{\varphi}\leq\varepsilon_0\}\cap \overline{B(r_1)})$};

    \coordinate(S3) at (-2,-1);
    \coordinate(S4) at (-0.55,0.4);
    \draw[->] (S3) to[bend left=20](S4);
    \node[fill=white, inner sep=3pt] at (S3) {$\Theta_\alpha$};

    \coordinate(S5) at (-0.3,-2);
    \coordinate(S6) at (0,-0.15);
    \draw[->] (S5) to[bend right=10] (S6);
    \node[fill=white, inner sep=3pt] at (S5) {$p_\alpha$};

    \coordinate(T1) at (2.4,-0.5);
    \coordinate(T2) at (1.9,-0.9);
    \coordinate(T3) at (0.9,-2);
    \coordinate(T4) at (3,-2.7);
    \coordinate(T5) at (5,-3);
    \draw[->] (T4) to[bend left=30] (T1);
    \draw[->] (T4) to[bend left=25] (T2);
    \draw[->] (T4) to[bend right=20] (T3);
    \node[fill=white, inner sep=3pt] at (T5) {$\Lambda_a$ ($0<a\ll\delta_0$)};
\end{scope}
\end{tikzpicture}
\end{figure}
We define a real analytic function $\tl{\varphi}$ on 
$X=X_0\times (-\varepsilon,\varepsilon)$ by 
\begin{equation}
    \tl{\varphi}(x,t)\coloneq \varphi (x) \quad 
    \bigl((x,t)\in X= X_0\times(-\varepsilon,\varepsilon) \bigr).
\end{equation}
and set 
\begin{equation}
    \SG\coloneq (G^\prime)_{\{-\varepsilon_0<\tl{\varphi}\leq\varepsilon_0\}} 
    \quad \in \BDrc(X)
\end{equation}
so that for the inclusion map 
$i=i_{X}: X_0\times \{0\} ( \simeq X_0) \longhookrightarrow 
X=X_0\times (-\varepsilon,\varepsilon)$ 
we have an isomorphism
\begin{equation}
    \rsect\bigl( X_0; \psi_f^\RR(F)_{\overline{B(r_1)}
\cap \{-\varepsilon_0<\varphi\leq\varepsilon_0\}} \bigr)
    \simeq \rsect(X_0; i^{-1} \SG).
\end{equation}
Then, by applying the microlocal Bertini-Sard theorem \cite[Proposition 8.3.12]{KS90}
to $\msupp(\SG)\sqcup\msupp(\SG)^a\subset T^\ast X$
and the real analytic function 
$t\colon X\times(-\varepsilon,\varepsilon)\longrightarrow(-\varepsilon,\varepsilon)$
and using the non-characteristic deformation lemma \cite[Proposition 2.7.2]{KS90},
we see that there exists $0<\delta_0\ll\delta$ such that for any $0<a\leq\delta_0$
we have isomorphisms
\begin{equation}
    \rsect(X_0; i^{-1} \SG) \simot 
    \rsect(X_0\times [0,a] ; \SG) \simto 
    \rsect(X_a; \SG\vbar_{X_a}).
\end{equation}
For any $0<a\leq\delta_0$ we thus obtain an isomorphism
\begin{equation}
    \rsect(X_a; (F_a)_{\overline{B(r_1)}\cap \{-\varepsilon_0<\varphi\leq\varepsilon_0\}})
    \simeq M^\alpha.
\end{equation}
Now recall that for $0<a\ll\delta_0$ we have 
\begin{equation}
    \msupp((F_a)_{\overline{B(r_1)}})=
    \msupp(G^\prime\vbar_{X_a}) \quad \subset
    \Lambda_a^\prime=\bigsqcup_{S\in\cS^\prime} T_{S\cap X_a}^\ast (X_a)
\end{equation}
and $\Lambda_a^\prime\longrightarrow\Lambda_0^\prime$ as $a\longrightarrow+0$.
Then for $0<a\ll\delta_0$ the compact subset 
$\rmd\varphi\bigl( \{\abs{\varphi}\leq\varepsilon_0\} \cap \overline{B(r_1)} \bigr) \subset T^\ast X_0$
intersects $\Lambda_a^\prime$ only on a neighborhood of the point 
$p_\alpha\in \Theta_\alpha\subset T^\ast X_0$ in $T^\ast X_0$ and 
the intersection is transversal (see Figure 2.). 
Then by Proposition \ref{prop-intersection} and Example \ref{ex-Lagintersect} 
as well as \cite[Theorems 9.5.3 and 9.5.6]{KS90},
we conclude that the 
multiplicity of $[\Theta_\alpha]$ in the limit 
$\displaystyle\lim_{a\to+0}\CCyc(F_a)$ is equal to
\begin{equation}
    \chi\bigl(\rsect(X_a; (F_a)_{\overline{B(r_1)}\cap 
\{-\varepsilon_0<\varphi\leq\varepsilon_0\}}) \bigr) =
    \chi\bigl( \rsect_{\{\varphi\geq0\}}(\psi_f^\RR (F) )_{q_\alpha} \bigr)
    =\chi(M^\alpha)=m_\alpha
\end{equation}
for $0<a\ll\delta_0$. This completes the proof.
\end{proof}

By the above arguments and 
Theorem \ref{thm-limformula}, we thus obtain Theorem \ref{thm-limit}. 
As is clear from the proof of Theorem \ref{thm-limformula}, 
via the microlocal Morse theory of Kashiwara-Schapira \cite{KS90} for 
the Morse function $\varphi$, our formula 
\begin{equation}
    \rsect(X_a; (F_a)_{\overline{B(r_1)}\cap \{-\varepsilon_0<\varphi\leq\varepsilon_0\}})
    \simeq M^\alpha \qquad ( 0<a\ll\delta_0 )
\end{equation}
allows us to calculate not only the characteristic cycle but also the 
``microlocal types'' $M^\alpha\in\BDrc(\pt)$ ($\alpha \in A$) 
of $\psi_f^\RR(F)\in \BDrc(X_0)$
in many situations.
Let us give some examples.

\begin{example}\label{ex-realnearby}
Let $X=\RR^n$ be the $n$-dimensional real vector space with  linear coordinates $(x_1,\dots,x_n)$.
For a real analytic function $f\colon X=\RR^n\longrightarrow\RR$
set $Z\coloneq f^{-1}(0)\subset X$ and let $\iota\colon Z\longhookrightarrow X=\RR^n$ be the inclusion map.
Let $i_f\colon X\longhookrightarrow X\times\RR$ be the graph embedding of $f$ 
and $t\colon X\times\RR\longrightarrow \RR$ the projection.
We set $F\coloneq i_{f\ast}\CC_X\in\BDrc(X\times\RR)$.
Then we have $\iota_\ast \psi^\RR_f(\CC_X)\simeq \psi_t^\RR(F)\in \BDrc(X)$.
\begin{enumerate}
\item [\rm (i)]
Let us consider the case of $f(x_1,\dots,x_n)\coloneq x_1^2+\cdots+x_n^2$.
In this situation, we have  $\msupp(\psi_t^\RR(F))\subset T_{\{0\}}^\ast X$.
Set $p\coloneq (0; \rmd x_1)\in T_{\{0\}}^\ast X$.
We shall calculate the ``microlocal type'' of the $\RR$-constructible sheaf $\iota_\ast 
\psi^\RR_f(\CC_X)\simeq \psi_t^\RR(F)\in\BDrc(X)$ at $p$ and its characteristic cycle.
In what follows, we use the same notation as in the proof of Theorem \ref{thm-limformula}.
Let us define a real analytic function 
$\varphi\colon X\longrightarrow\RR$ by $\varphi(x_1,\dots,x_n)\coloneq x_1$.
It is straightforward to check that $\varphi$ satisfies the conditions (\ref{eq-conditions}).
Let $r_1>0$ and $\varepsilon_0>0$ be sufficiently small real numbers as in the proof of Theorem \ref{thm-limformula}.
Then for a sufficiently small $a>0$ one has $f^{-1}(a)\subset \overline{B(r_1)}\cap 
\{-\varepsilon_0<\varphi\leq\varepsilon_0\}$ and hence
\begin{align}
    H^j\rsect(X_a; (F_a)_{\overline{B(r_1)}\cap \{-\varepsilon_0<\varphi\leq\varepsilon_0\}})
    &\simeq 
    H^j\rsect(X; \CC_{f^{-1}(a)}) \\
    &\simeq
    \begin{cases}
        \CC & (j=0,n-1), \\
        0 & (\mathit{otherwise}).
    \end{cases}
\end{align}
Therefore, we obtain $\psi_t^\RR(F)\simeq \CC_{\{0\}}\oplus(\CC_{\{0\}}[1-n])$ in the localized category $\BDC(X; p)$ and 
$\CCyc(\psi_t^\RR(F))=(1+(-1)^{n-1})\cdot [T_{\{0\}}^\ast X]$.
Note that if we are only interested in the characteristic cycle, we can compute it 
more directly from Theorem \ref{thm-limformula} as follows.
We use the same notation as above.
For $a>0$ set $Z_a\coloneq f^{-1}(a)\subset X$.
Then the restriction $\varphi|_{Z_a}\colon Z_a\longrightarrow \RR$
has two Morse critical points $q_\pm\coloneq(\pm\sqrt a,0,\dots,0)\in Z_a$.
We can check that the Morse indices of $\varphi|_{Z_a}$ at $q_+$ and $q_-$ are $n-1$ and $0$, respectively.
It follows from $\CCyc(F_a)=\CCyc(\CC_{Z_a})=[T_{Z_a}^\ast X]$ and \cite[Example 9.5.7]{KS90} that
\begin{equation}
     \#([\Lambda^\varphi]\cap \CCyc(F_a) ) 
    = (-1)^0 +(-1)^{n-1} =1+(-1)^{n-1},
\end{equation}
where $[\Lambda^\varphi]$ is the Lagrangian cycle associated to $\varphi$.
By Proposition \ref{prop-intersection} and Theorem \ref{thm-limformula}, one has
\begin{align}
    \#([\Lambda^\varphi]\cap \CCyc(\psi_t^\RR(F)))
    &=  \#\bigl( [\Lambda^\varphi]\cap (\lim_{a\to+0}\CCyc(F_a)) \bigr)\\
    &= \#([\Lambda^\varphi]\cap \CCyc(F_a) ) \\
    &= 1+(-1)^{n-1}.
\end{align}
This implies that the multiplicity of $[T_{\{0\}}^\ast X]$ in $\CCyc(\psi_t^\RR(F))$ is equal to $1+(-1)^{n-1}$.

\item [\rm (ii)]
Let us consider the case of $n=2$ and $f(x_1,x_2)=x_1^3-x_2^2$.
In this situation, we have 
$\msupp(\psi_t^\RR(F))\subset T_{\{0\}}^\ast X\cup \overline{T_{Z_\reg}^\ast X}$.
Set $p\coloneq (0;\rmd x_1)\in T_{\{0\}}^\ast X\setminus \overline{T_{Z_\reg}^\ast X}$.
We shall calculate the ``microlocal types'' of the $\RR$-constructible sheaf $\psi_t^\RR(F)\in\BDrc(X)$ at $p$ and $-p$.
We use the same notation as in the proof of Theorem \ref{thm-limformula}.
Let us define a real analytic function 
$\varphi\colon X\longrightarrow\RR$ by $\varphi(x_1,x_2)\coloneq x_1$.
Then $\varphi$ satisfies the conditions (\ref{eq-conditions}).
Let $r_1>0$ and $\varepsilon_0>0$ be sufficiently small real numbers as in the proof of Theorem \ref{thm-limformula}.
Then for a sufficiently small $a>0$ one has 
\begin{align}
    H^j\rsect(X_a; (F_a)_{\overline{B(r_1)}\cap \{-\varepsilon_0<\varphi\leq\varepsilon_0\}})
    \simeq 
    \begin{cases}
        \CC & (j=0), \\
        0 & (\mathit{otherwise}).
    \end{cases}
\end{align}
This implies $\psi_t^\RR(F)\simeq \CC_{\{0\}}$ in $\BDC(X; p)$.
Similarly, it is straightforward to check that $\psi_t^\RR(F)\simeq \CC_{\{0\}}[-1]$ in $\BDC(X; -p)$.
\end{enumerate}
\end{example}

Now let $X$ be a real analytic manifold and $M\subset X$ its real analytic submanifold.
Recall that for the normal deformation $\tl{X}_M$ of $X$ along $M$ there exists a 
natural morphism $t\colon\tl{X}_M\longrightarrow\RR$ such that 
$t^{-1}(0)\simeq T_M X$ and a commutative diagram
\begin{equation}
\begin{tikzcd}
    T_M X \ar[r,"s",hook] \ar[d,"\tau"] & 
    \tl{X}_M \ar[d,"p"] &
    \Omega=t^{-1}(\RR_{>0}) \ar[l,"j",hook'] \ar[ld,"\tl{p}"] \\
    M \ar[r,"i",hook] & X. & 
\end{tikzcd}
\end{equation}
Then by Theorem \ref{thm-limformula} for $F\in\BDrc(X)$ we obtain the following 
formula of the characteristic cycle of its specialization $\nu_M(F)\in\BDrc(T_M X)$
along $M\subset X$.

\begin{theorem}\label{thm-CCspe}
    Let $F\in \BDrc(X)$ be an $\RR$-constructible sheaf on $X$.
    Then we have
    \begin{equation}
        \CCyc(\nu_M(F))=\lim_{a\to+0}\CCyc(\tl{F_a}),
    \end{equation}
    where for $a>0$ we set 
    $\tl{F_a}\coloneq (\tl{p}^{-1}F)|_{t^{-1}(a)}\simeq (p^{-1}F)|_{t^{-1}(a)}\in\BDrc\bigl( t^{-1}(a) \bigr)$.
\end{theorem}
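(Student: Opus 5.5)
The plan is to recognize $\nu_M(F)$ as a real nearby cycle sheaf on the normal deformation and then invoke Theorem \ref{thm-limformula}. The function $t\colon \tl{X}_M\longrightarrow\RR$ is a submersion, since $\tl{X}_M$ is locally $X\times\RR$ near $t\neq0$ and is given by the standard normal-deformation coordinates near $t=0$. So $t^{-1}(0)\simeq T_MX$ is a smooth hypersurface and $\Omega=\{t>0\}$. Set $\tl F\coloneq \rmR j_\ast\tl p^{-1}F\in\BDrc(\tl X_M)$; $\RR$-constructibility holds because $j$ is the inclusion of a subanalytic open set and $\tl p$ is a submersion. Then, by definition and since $j$ is an open embedding,
\begin{equation}
\psi_t^\RR(\tl F)=s^{-1}\rsect_{\Omega}(\rmR j_\ast\tl p^{-1}F)\simeq s^{-1}\rmR j_\ast j^{-1}\rmR j_\ast\tl p^{-1}F\simeq s^{-1}\rmR j_\ast\tl p^{-1}F=\nu_M(F).
\end{equation}
Alternatively, one can take $p^{-1}F$ directly and use $\rsect_{\Omega}(p^{-1}F)\simeq \rmR j_\ast j^{-1}p^{-1}F=\rmR j_\ast\tl p^{-1}F$. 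Either way, $\nu_M(F)\simeq\psi_t^\RR(p^{-1}F)$.

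Next, I would apply Theorem \ref{thm-limformula} to the function $t$ and the sheaf $p^{-1}F$. For $a>0$ the fibre restriction is $(p^{-1}F)|_{t^{-1}(a)}=\tl{F_a}$, which is the family appearing in the statement. The reductions made before Lemma \ref{ex-lem} must also be checked in this setting. First, $t$ has no critical points on $t^{-1}(0)$. Second, the question is local on $T_MX$: for a relatively compact subanalytic open set $U\subset\tl X_M$ we replace $p^{-1}F$ by $(p^{-1}F)_U$. This makes $t$ proper on the support, and the microlocal Bertini-Sard theorem then yields $\varepsilon>0$ such that the restriction is adapted to $t$ over $(0,\varepsilon)$. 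Lemma \ref{ex-lem} then gives existence of the limit, and Theorem \ref{thm-limformula} gives
$\CCyc(\psi_t^\RR((p^{-1}F)_U))=\lim_{a\to+0}\CCyc((\tl{F_a})_{U\cap t^{-1}(a)})$. On the open set $U\cap T_MX$, the left side agrees with $\CCyc(\nu_M(F))$, and the right side agrees with the restriction of $\lim\CCyc(\tl{F_a})$. Both characteristic cycles and the limit construction of Section \ref{subsec-limit} are local over $T_MX$, so these local equalities glue. The equality is therefore understood in $T^\ast(T_MX)$ over each relatively compact open subset, as in Theorem \ref{thm-limit}.

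The main obstacle is the compatibility between restricting to $U$ and taking the limit: $\CCyc(F_U)$ agrees with $\CCyc(F)$ only over $U$, and the limit of cycles supported near $\partial U$ could a priori contribute new components on $t^{-1}(0)$. I would handle this with the locality of $\Phi$, that is, Lemma \ref{lem-pullPhi} applied to an open embedding $V\times J\hookrightarrow T^\ast X_0\times J$ with $\overline V\subset U$. This shows that over $V$ the limit only depends on the cycles over $V\times I$, where the cut-off has no effect.
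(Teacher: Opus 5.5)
Your proposal is correct and follows the paper's proof: the paper likewise observes $\nu_M(F)\simeq s^{-1}\rmR j_\ast j^{-1}(p^{-1}F)\simeq\psi_t^{\RR}(p^{-1}F)$ and then applies Theorem \ref{thm-limformula} directly. Your cut-off by $(p^{-1}F)_U$ and your locality argument for the limit via Lemma \ref{lem-pullPhi} make explicit a localization that the paper treats implicitly, through its reductions before Lemma \ref{ex-lem} and the caveat ``over each relatively compact open subset'' in Theorem \ref{thm-limit}.
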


\begin{proof}
    Note that we have isomorphisms
    \begin{equation}
        \nu_M(F)\simeq s^{-1}\rmR j_{\ast} j^{-1}(p^{-1}F)\simeq \psi_t^{\RR}(p^{-1}F).
    \end{equation}
    Then the assertion immediately follows from Theorem \ref{thm-limformula}.
\end{proof}

With Theorem \ref{thm-CCspe} at hand, for $F\in\BDrc(X)$ we obtain also a formula
of $\CCyc(\mu_M(F))$ as follows.
Recall that by \cite[Proposition 5.5.1]{KS90} there exists a natural morphism
\begin{equation}
    \Phi_{T_MX}\colon T^\ast(T_MX)\simto T^\ast(T_M^\ast X).
\end{equation}
Let $x=(y,z)$ be a local coordinate of $X$ such that $M=\{y=0\}$ and $(z,\eta)$
(resp. $(z,\eta^\ast)$) the coordinate of $T_M X$ (resp. $T_M^\ast X$) associated to it.
Then the morphism $\Phi_{T_M X}$ is explicitly written as
\begin{equation}
    \Phi_{T_M X}\colon (z,\eta;z^\ast,\eta^\ast)\longmapsto (z,\eta^\ast;z^\ast,-\eta).
\end{equation}
By $\Phi_{T_M X}\colon T^\ast(T_M X)\simto T^\ast(T_M^\ast X)$ identify 
$T^\ast(T_M X)$ with $T^\ast(T_M^\ast X)$.
Moreover, we identify $\pi_{T_M X}^{-1}\omega_{T_M X}$ with $\pi_{T_M^\ast X}^{-1}\omega_{T_M^\ast X}$ naturally.
Then by \cite[Theorem 5.5.5 and Exercise IX.7]{KS90} for any ``conic''
$\RR$-constructible sheaf $G\in\BDrc(T_M X)$ on $T_M X$ we have
\begin{equation}
    \msupp(G^\wedge)=\msupp(G), \quad
    \CCyc(G^\wedge)=\CCyc(G).
\end{equation}
Applying this result to $\nu_M(F)\in\BDrc(T_M X)$ in Theorem \ref{thm-CCspe},
we obtain a formula of $\CCyc(\mu_M(F))$.

\section{Characteristic cycles of complex nearby and vanishing cycle sheaves}\label{sec-cc}
In this section, we apply our results in Section \ref{sec-realnearby} to 
obtain formulas for the characteristic cycles of complex 
nearby and vanishing cycles.
Let $X$ be a complex manifold and 
$f\colon X\longrightarrow \CC$ a non-constant holomorphic function
on it.
Set $Y\coloneq f^{-1}(0) \subset X$ and let $i\colon Y \longhookrightarrow X$
and $j\colon X\setminus Y \longhookrightarrow X$ 
be the inclusion maps.
Let $\exp \colon \CC \longrightarrow \CC^\ast =\CC \setminus \{0\} \quad 
(\tau \longmapsto \exp (\tau))$
be the universal covering of $\CC^\ast$ and define a complex 
manifold $\widetilde{X\setminus Y}$ by the Cartesian diagram 
\begin{equation}\label{eq-uni-cov}
    \xymatrix{
        \widetilde{X\setminus Y} \ar@{}[rd]|{\Box} \ar[r]^{\widetilde{f}} \ar[d]_p & \CC \ar[d]^{\exp }\\
        X\setminus Y \ar[r]_-{f|_{X\setminus Y}} & \CC^\ast .
    }
\end{equation}
Then we obtain a commutative diagram
\begin{equation}\label{eq-near}
    \xymatrix{
    {} & {} & \widetilde{X\setminus Y} \ar[d]_p \ar[ld]_{\widetilde{p}} \\
    Y \ar@{^{(}->}[r]_i & X & X\setminus Y \ar@{_{(}->}[l]^j 
    }
\end{equation}
and the following definition of complex nearby cycle sheaves 
due to Deligne \cite{Del73}.

\begin{definition}
    For $F \in \mathbf{D}^{\mathrm{b}}(X)$ we set 
    \begin{equation}
        \psi_f (F) \coloneq i^{-1} \mathrm{R} \widetilde{p}_\ast \widetilde{p}^{-1} F 
        \simeq  i^{-1} \mathrm{R} j_\ast \mathrm{R} p_\ast p^{-1} j^{-1} F \quad \in \mathbf{D}^{\mathrm{b}}(Y)
    \end{equation}
    and call it the complex nearby cycle sheaf of $F$ along $f$.
\end{definition}
Let $\Dbc (X) \subset \Db (X)$ be the full subcategory of $\Db (X)$
consisting of objects whose cohomology sheaves are complex constructible.
Then we can show that the functor $\psi_f (\cdot) \colon \Db (X) \longrightarrow \Db (Y)$
preserves the constructibility and obtain a functor 

\begin{equation}
    \psi_f (\cdot) \colon \Dbc (X) \longrightarrow \Dbc(Y).
\end{equation}
For a point $x_0 \in Y$ of $Y=f^{-1}(0) \subset X$ let $M_{f,x_0}
\subset X\setminus Y$ be the Milnor fiber of $f$ at $x_0$.
Then we have the following basic result (see. e.g. \cite[Proposition 4.2.2]{Dim04},
\cite{Le77} and \cite[Theorem 2.6]{Tak25}).

\begin{lemma}\label{lemm-Miln} 
    Let $F \in \Dbc(X)$ be a complex constructible sheaf on $X$.
    Then for any point $x_0 \in Y$ of $Y=f^{-1}(0) \subset X$
    and any $j\in \mathbb{Z}$ there exists an isomorphism
    \begin{equation}
        H^j \psi_f (F)_{x_0} \simeq H^j (M_{f,x_0}; F).
    \end{equation}
\end{lemma}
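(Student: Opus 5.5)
The plan is to compute the stalk directly from the definition, following the real case. By definition
\[
\psi_f(F)_{x_0}\simeq \bigl(\rmR \widetilde{p}_\ast \widetilde{p}^{-1}F\bigr)_{x_0}\simeq \varinjlim_{U\ni x_0} \rsect\bigl(\widetilde{p}^{-1}(U);\widetilde{p}^{-1}F\bigr),
\]
where $U$ ranges over open neighborhoods of $x_0$ in $X$. It therefore suffices to show two things. First, the neighborhoods of the form $U=B(x_0)\cap f^{-1}(D_\eta)$ are cofinal among those relevant to the limit. Here $B(x_0)$ is a small closed or open Milnor ball and $D_\eta\subset\CC$ is a small disc with $0<\eta\ll$ the ball radius. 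Second, for such $U$ the cohomology $\rsect(\widetilde{p}^{-1}(U);\widetilde{p}^{-1}F)$ is computed by the Milnor fiber.

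For the cofinality step, I would use the local conic structure of constructible sheaves. Take a Whitney stratification of $X$ adapted to $F$ with $Y$ a union of strata. Let $\rho=|x-x_0|^2$. By the microlocal Bertini--Sard theorem \cite[Proposition 8.3.12]{KS90} and the curve selection lemma, the spheres $\{\rho=r\}$ are transversal to all strata for $0<r\le r_0$. By the Thom isotopy / Milnor argument (cf. \cite[p.43]{GM88}), for $0<\eta\ll r$ the map
\[
f\colon B(r)\cap f^{-1}(D_\eta^\ast)\longrightarrow D_\eta^\ast
\]
is a stratified topological locally trivial fibration with fiber $M_{f,x_0}$. Combining transversality of $\rho$ with the non-characteristic deformation lemma \cite[Proposition 2.7.2]{KS90}, applied to $\rmR\widetilde{p}_\ast\widetilde{p}^{-1}F$ restricted to $f^{-1}(D_\eta)$, shows that shrinking $r$ inside the family $B(r)\cap f^{-1}(D_\eta)$ gives isomorphisms on sections. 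A similar argument with shrinking $\eta$ then shows that these sets compute the stalk.

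Next, the base change $\widetilde{p}^{-1}(B(r)\cap f^{-1}(D_\eta))$ is the pullback of the fibration above along the universal cover $\exp\colon \{\Re\tau<\log\eta\}\to D_\eta^\ast$. The pullback is a locally trivial stratified fibration over a contractible half-plane. It is therefore trivial, $\simeq M_{f,x_0}\times\{\Re\tau<\log\eta\}$, compatibly with $\widetilde{p}^{-1}F$, which is the pullback of $F|_{M_{f,x_0}}$ up to this homeomorphism. Homotopy invariance of sheaf cohomology for constructible sheaves along a contractible factor, i.e.\ $\rsect(M\times H;q^{-1}G)\simeq\rsect(M;G)$ for $H$ contractible, then gives
\[
\rsect\bigl(\widetilde{p}^{-1}(U);\widetilde{p}^{-1}F\bigr)\simeq \rsect(M_{f,x_0};F).
\]
Taking $H^j$ yields the claim.

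The main obstacle is making the trivialization compatible with $F$, since the fibration is only topologically stratified-trivial. I would avoid this by working sheaf-theoretically. The restriction of $\rmR f_\ast(F_{B(r)\cap f^{-1}(D_\eta^\ast)})$ to $D_\eta^\ast$ is locally constant, because $f$ is non-characteristic for the Whitney stratification there, which follows from the $a_f$-condition / Milnor's lemma, together with properness on the compact ball. Hence its pullback to the simply connected half-plane is constant, with stalk $\rsect(M_{f,x_0};F)$. Base change for the proper map $f$ then identifies $\rsect(\widetilde{p}^{-1}(U);\widetilde{p}^{-1}F)$ with global sections of this constant sheaf on the half-plane, as in the proof of \cite[Proposition 4.2.2]{Dim04}.
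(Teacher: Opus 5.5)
Your proposal is correct and follows essentially the route the paper relies on. The paper gives no proof of its own and refers to \cite[Proposition 4.2.2]{Dim04}, \cite{Le77} and \cite[Theorem 2.6]{Tak25}, whose standard argument is exactly your Milnor fibration over $D_\eta^\ast$, pull-back along $\exp$ to the contractible half-plane $\{\Re\tau<\log\eta\}$, and base change.

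Two points need care in the write-up. First, the cofinality is easiest to check after the base-change identification, where the restriction maps become $\rsect(\overline{B(r_1)}\cap f^{-1}(t);F)\to\rsect(\overline{B(r_2)}\cap f^{-1}(t);F)$. These are isomorphisms by the transversality of the spheres to the nearby fibers $f^{-1}(t)\cap S$, i.e. the $a_f$/Milnor lemma you invoke at the end; mere transversality to the strata is not enough, and a deformation argument for $\rmR\widetilde{p}_\ast\widetilde{p}^{-1}F$ along $Y$ would need control of its microsupport there. Second, the ball in your last step must be the closed one, since $\rmR f_\ast$ of the extension by zero from an open ball would compute $\rsect_{\mathrm{c}}$ of the Milnor fiber.
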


Recall that by the construction of the functor $\psi_f (\cdot)$
for $F\in \Dbc(X)$ we obtain an automorphism $\Psi (F)\colon 
\psi_f(F) \simto \psi_f(F)$
that we call the monodromy automorphism of $\psi_f(F) \in \Dbc(Y)$.
From now, we will show that by forgetting the monodromy
automorphism $\Psi(F)$ we can simplify the construction of 
$\psi_f(F) \in \Dbc(Y)$ and define it by the real nearby cycle
functor introduced in Section \ref{sec-realnearby}.
For this purpose, assume that we have $\mathrm{d} f(x_0) \neq 0$
for any point $x_0 \in Y$ of $Y=f^{-1}(0) \subset X$ i.e. 
$f\colon X \longrightarrow \CC$ is smooth on a neighborhood of
$Y=f^{-1}(0) \subset X$ in $X$.
Shrinking $X$ if necessary, we may assume that $f\colon X \longrightarrow \CC$
itself is smooth.
In this situation, the complex hypersurface $Y=f^{-1}(0) \subset X$ 
is smooth and we define a smooth real analytic hypersurface $H\subset X$ 
of $X$ containing $Y\subset X$ by $H\coloneq \{ \mathrm{Im} f =0\} \subset X$.
Let $g\colon H \longrightarrow \RR$ be the restriction of $\mathrm{Re} \, f \colon X \longrightarrow \RR$
to $H=\{ \mathrm{Im} f=0\} \subset X$ so that we have $g^{-1}(0) =Y=f^{-1}(0)$.

\begin{lemma}\label{lem-nearby}
    Let $F\in \Dbc(X)$ be a complex constructible sheaf on $X$.
    Then there exists an isomorphim
    \begin{equation}
        \psi_f(F) \simto \psi_g^{\RR} (F|_H)
    \end{equation}
    in $\Db(Y)$.
\end{lemma}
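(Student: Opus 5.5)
We first construct a natural morphism and then check that it is an isomorphism on stalks via Milnor fibers. On the real side, $\psi_g^{\RR}(F|_H) = i_H^{-1}\rsect_{\{g>0\}}(F|_H)$, where $i_H\colon Y\longhookrightarrow H$ is the inclusion and $\{g>0\}=\{f\in\RR_{>0}\}\subset H$. The set $\{g>0\}$ lifts to $\widetilde{X\setminus Y}$: the sheet $\widetilde{f}^{-1}(\RR)\subset \widetilde{X\setminus Y}$ maps isomorphically onto $\{g>0\}$ via $p$. Write $s\colon \{g>0\}\longhookrightarrow\widetilde{X\setminus Y}$ for this section. Denote by $k\colon\{g>0\}\longhookrightarrow X$ and $k_H\colon \{g>0\}\longhookrightarrow H$ the inclusions, which satisfy $\widetilde p\circ s=k$. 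Since $\{g>0\}$ is open in $H$, we have $\rsect_{\{g>0\}}(F|_H)\simeq \rmR k_{H\ast}k^{-1}F$. Composing the adjunction $\widetilde p^{-1}F\to \rmR s_\ast s^{-1}\widetilde p^{-1}F\simeq \rmR s_\ast k^{-1}F$ with $\rmR\widetilde p_\ast$ gives
\begin{equation*}
\rmR\widetilde p_\ast\widetilde p^{-1}F\longrightarrow \rmR k_\ast k^{-1}F\simeq \rmR\iota_{H\ast}\rmR k_{H\ast}k^{-1}F,
\end{equation*}
where $\iota_H\colon H\longhookrightarrow X$. Applying $i^{-1}=i_H^{-1}\iota_H^{-1}$ together with $\iota_H^{-1}\rmR\iota_{H\ast}\simeq\id$ (because $H$ is closed) then yields a morphism $\psi_f(F)\to\psi_g^{\RR}(F|_H)$.

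Next we check on stalks at $x_0\in Y$. Since $f$ is smooth, we take local coordinates in which $f$ is the first coordinate, so $X\simeq \CC\times Y$ near $x_0$. The stalk of the left side is $\varinjlim \rsect(B\cap\{0<|f|<\eta\}\times_{\CC^\ast}\CC;F)$, with $B$ a small ball, and by Lemma \ref{lemm-Miln} this is the cohomology of the Milnor fiber $M_{f,x_0}=B\cap f^{-1}(\eta)$. The stalk of the right side is $\varinjlim\rsect(B\cap f^{-1}((0,\eta));F)$, and by the real Milnor fiber lemma of Section \ref{sec-realnearby} this is the cohomology of $M^{\RR}_{g,x_0}=B\cap f^{-1}(\eta)$ as well. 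The task is to show that the map induced on these limits is the restriction to this common fiber, and is therefore an isomorphism.

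For the left side we argue as follows. Choose $B$ to be a good Milnor ball (a polydisc adapted to the product structure) and use the constructibility of $F$. By the Milnor–Lê fibration, the map $f\colon B\cap\{0<|f|<\eta\}\to D^\ast_\eta$ is a topological fiber bundle on which $F$ is locally constant along fibers, i.e.\ $F$ is cohomologically locally constant with respect to $f$. Pulling back to the universal cover gives a bundle over the contractible set $\exp^{-1}(D^\ast_\eta)$, so restriction to the sheet $\widetilde f^{-1}((-\infty,\log\eta))$ induces an isomorphism on $\rsect$. The same argument, applied to the real interval $(0,\eta)$ itself, identifies the sheet's cohomology with that of the single fiber. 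Thus both stalks map isomorphically to $\rsect(M_{f,x_0};F)$ compatibly with our morphism.

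The main obstacle is to make the bundle argument rigorous at the level of sheaves, namely to show that $\rmR f_\ast(F_{B})$ has locally constant cohomology on $D^\ast_\eta$ and that restriction to the real ray or to a point induces isomorphisms. To do this I would use the non-characteristic deformation lemma \cite[Proposition 2.7.2]{KS90}, combined with the microlocal Bertini–Sard theorem \cite[Proposition 8.3.12]{KS90} applied to $\msupp(F)$ and the functions $|f|$, $\rho=$ distance to $x_0$, exactly as in the proof of Theorem \ref{thm-limformula}. Everything else is formal adjunction bookkeeping.
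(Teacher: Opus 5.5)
Your proposal is correct and follows the paper's route: the paper builds exactly the same morphism, via the adjunction for the real sheet $\widetilde{f}^{-1}(\RR)\simeq\{g>0\}$ of $\widetilde{X\setminus Y}$, and then only says it is an isomorphism ``by the proof of \cite[Theorem 2.6]{Tak25}'', i.e.\ by the same Milnor-fiber comparison it cites for Lemma~\ref{lemm-Miln}, which is the stalkwise check you sketch. One caution: the step that needs care is uniformity in the angular direction of $f$, namely the Milnor--L\^{e} fibration over the whole punctured disc, which requires the small spheres to be transverse to the nearby fibers of $f$ relative to the strata (a Thom $a_f$-type property coming from complex analyticity); this does not follow from microlocal Bertini--Sard applied to $|f|$ and $\rho$ separately, because Bertini--Sard for $\rho$ only excludes $d\rho$ itself from $\msupp(F)$ and not the covectors $d\rho+\Re(c\,\rmd f)$ with $c\in\CC$, so for that step you should cite the Milnor--L\^{e} theorem rather than rely on the argument in your last paragraph.
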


\begin{proof}
    We shall use the notations in the commutative
    diagrams (\ref{eq-uni-cov}) and (\ref{eq-near}).
    Let $\RR \coloneq \{ \tau \in \CC \mid \mathrm{Im} \tau =0\} 
    \subset \CC$ be the real axis in $\CC$ and set $\ell \coloneq 
    \exp (\RR) \subset \CC^\ast$.
    Then the restriction $\RR \longrightarrow \ell$
    of $\exp \colon\CC \longrightarrow \CC^\ast$ to 
    $\RR\subset \CC$ is an isomorphism.
    Note that $L\coloneq \left( f|_{X\setminus Y}\right)^{-1} (\ell) \subset X\setminus Y$
    is nothing but the open half space $\{ g>0\} \subset H$
    of $H=\{ \mathrm{Im} f=0\} \subset X$
    and the restriction ${\widetilde{f}}^{-1}(\RR) \longrightarrow
    L =\left( f|_{X\setminus Y}\right)^{-1}(\ell)$
    of $p\colon \widetilde{X\setminus Y} \longrightarrow X\setminus Y$
    is an isomorphism.
    Let $\iota \colon \widetilde{f}^{-1} (\RR) \longhookrightarrow \widetilde{X\setminus Y}$
    and $j_{\RR} \colon L=\{ g>0\} \longhookrightarrow X$ be the inclusion maps.
    Then we obtain a chain of morphisms
    \begin{align}
        \psi_f(F) & = i^{-1}\mathrm{R}j_\ast \mathrm{R} p_\ast p^{-1} j^{-1} F\\
        & \longrightarrow i^{-1}\mathrm{R} j_\ast \mathrm{R} p_\ast 
\iota_\ast \iota^{-1} p^{-1} j^{-1} F \\
        & \simeq i^{-1} \mathrm{R} {j_{\RR}}_\ast j_{\RR}^{-1} F \simeq \psi_g^{\RR} (F|_H).
    \end{align}
    By the proof of \cite[Theorem 2.6]{Tak25} we can easily show that 
    it is an isomorphism.
\end{proof}
By Lemma \ref{lem-nearby}, as a special case of Theorem \ref{thm-limformula} we obtain 
a topological analogue of 
Ginsburg's theorem \cite[Theorem 5.5]{Gin86} as follows. 
As in Section \ref{sec-realnearby}, for $0\leq a \ll 1$ and $F\in \Dbc(X)$ 
we set $X_a \coloneq g^{-1}(a) =f^{-1}(a) \subset H \subset X$
and $F_a \coloneq F|_{X_a}$.
Note that for $a=0$ we have $X_0 =Y =f^{-1}(0)$.
Then we have the following result.

\begin{theorem}[{Ginsburg \cite[Theorem 5.5]{Gin86}}]\label{thm-Gins}
    Let $F\in \Dbc(X)$ be a complex constructible sheaf
    on $X$.
    Then we have 
    \begin{equation}
        \CCyc (\psi_f(F)) = \lim_{a \to +0} \CCyc (F_a)
    \end{equation}
    in $T^\ast X_0$, where this equality 
    holds true only over each relatively compact open subset 
    of $X_0$.
\end{theorem}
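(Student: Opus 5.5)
The statement will be reduced to Theorem \ref{thm-limformula} via Lemma \ref{lem-nearby}. Since characteristic cycles and the limit are local on $X_0=Y$, we fix a relatively compact open subset $U\subset Y$. We work near $\overline{U}$, reducing first to the case where $f$ is smooth near $Y$. For singular $f$ we would use the graph embedding $X\hookrightarrow X\times\CC$, as in the corollary preceding Theorem \ref{thm-limformula} but with complex $t$; since $\psi_t(i_{f\ast}F)\simeq \iota_\ast\psi_f(F)$, it suffices to treat smooth $f$. After shrinking $X$, we assume $f$ is smooth and set $H=\{\Im f=0\}$ and $g=\Re f|_H$, so that $\rmd g\neq 0$ along $Y=g^{-1}(0)$.

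\textbf{Step 1.} By Lemma \ref{lem-nearby}, $\psi_f(F)\simeq\psi_g^{\RR}(F|_H)$ in $\Db(Y)$. In particular, $\psi_f(F)$ is $\RR$-constructible and its Kashiwara--Schapira characteristic cycle equals $\CCyc(\psi_g^{\RR}(F|_H))$.

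\textbf{Step 2.} We check that $F|_H\in\BDrc(H)$. This is the restriction of an $\RR$-constructible sheaf to a real analytic submanifold. Over $\overline U$, after replacing $F|_H$ by $(F|_H)_{V}$ for a relatively compact subanalytic open $V\supset\overline U$, $g$ becomes proper on the support. Theorem \ref{thm-limformula} then applies to $g\colon H\to\RR$ and $F|_H$. For $0<a\ll1$, the fibres are $g^{-1}(a)=f^{-1}(a)=X_a$ (because $\Im f=0$ on $H$), and $(F|_H)|_{X_a}=F_a$. Moreover $F|_H$ is adapted to $g$ over $(0,\varepsilon)$ by the microlocal Bertini--Sard argument used there. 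Hence $\CCyc(\psi_g^{\RR}(F|_H))=\lim_{a\to+0}\CCyc(F_a)$ over $U$.

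\textbf{Step 3.} We combine the two identifications. The cut-off by $V$ does not change either side over $U$: the left side because $\psi$ is local, and the right side because the families $\CCyc(F_a)$ agree over $U$. This gives the equality on each relatively compact open subset of $X_0$.

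The main obstacle is the reduction to smooth $f$, i.e.\ checking that the graph-embedding trick is compatible with both sides. On the right, $\CCyc((i_{f\ast}F)_a)$ must be identified with the pushforward of $\CCyc(F_a)$ under the closed embedding. This follows from Proposition \ref{prop-Lag_push} together with KS's direct image formula for characteristic cycles. The statement, however, is read in $T^\ast X_0$ with $X_0$ possibly singular, so the reading should be the pushforward into $T^\ast X$ (or the smooth case only). I would try first to treat the smooth case as the main content. For the general case, I would interpret both sides inside $T^\ast X$ via $\iota_\ast$ and invoke Proposition \ref{prop-Lag_push} to commute limits with pushforward.
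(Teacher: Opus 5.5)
Your proof is correct and is essentially the paper's argument: under the standing assumption that $f$ is smooth, Lemma~\ref{lem-nearby} gives $\psi_f(F)\simeq\psi_g^{\RR}(F|_H)$ with $g=\Re f|_H$ and $g^{-1}(a)=f^{-1}(a)$, so the statement is the special case of Theorem~\ref{thm-limformula} applied to $g$ and $F|_H$. Your graph-embedding discussion for singular $f$ is not needed. The statement is made in that smooth setting, which is what makes $T^\ast X_0$ meaningful.
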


By the proof of Theorem \ref{thm-limformula}, for $F\in \Dbc(X)$ 
we obtain not only the characteristic cycle but also the 
microlocal types of $\psi_f(F)$ in many situations.
For example, let us show the well-known fact that 
for a perverse sheaf $F\in \Dbc(X)$ on $X$ the shifted 
nearby cycle sheaf $\psi_f(F)[-1] \in \Dbc(Y)$ 
is a perverse sheaf on $Y$ (here we adopt the convention 
that $\CC_X [ {\rm \dim}_\CC X] \in \Dbc (X)$ is perverse). 
Recall that this important result was first proved by 
Kashiwara in \cite{Kas83b} by constructing a regular 
holonomic $\mathcal{D}_Y$-module $\mathcal{N}$ on $Y$ 
which corresponds to $\psi_f(F)[-1] \in \Dbc(Y)$ 
via the Riemann-Hilbert correspondence. 
See also Goresky-MacPherson \cite{GM} for another 
approach to this problem. 
Later, some other proofs were given by 
Brylinski \cite[Theorem 1.2 and Corollary 1.7]{Bry86}, 
Kashiwara-Schapira \cite[Corollary 10.3.11]{KS90}
and Sch\"{u}rmann \cite[Chapter 6]{Sch03}.
Our proof below is microlocal, in the sense that it relies on 
the microlocal characterization of perversity proved by 
Kashiwara and Shcapira in \cite[Theorem 9.5.2]{KS85}.

\begin{theorem}\label{thm-perverse}
    Assume that $F\in \Dbc(X)$ is perverse.
    Then $\psi_f(F)[-1] \in \Db(Y)$ is also perverse.
\end{theorem}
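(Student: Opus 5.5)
We reduce, as in the preceding discussion, to the case where $f$ is smooth: replacing $F$ by $i_{f\ast}F$ on $X\times\CC_t$ via the graph embedding keeps perversity up to the shift $\dim$ convention (a closed embedding preserves perversity), and nearby cycles commute with proper direct images as in the Proposition after the definition of $\psi_f^\RR$. So assume $f$ smooth, $Y=f^{-1}(0)$ a smooth hypersurface, and by Lemma \ref{lem-nearby} $\psi_f(F)\simeq\psi_g^\RR(F|_H)$ with $H=\{\Ima f=0\}$. We use the microlocal characterization of perversity \cite[Theorem 9.5.2]{KS85}. A $\CC$-constructible $G$ on $Y$ ($\dim_\CC Y=n-1$) is perverse iff for every stratum $S$ of a complex Whitney stratification adapted to $G$ and every generic $p\in T^\ast_SY$ (not in the closure of other conormals), the microlocal type $M$ with $G\simeq M_S$ in $\BDC(Y;p)$ is concentrated in the single degree $-\dim_\CC S$ (up to a convention-dependent constant). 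Equivalently $\rsect_{\{\varphi\ge0\}}(G)_q$ is concentrated in that degree for a Morse function $\varphi$ as in \eqref{eq-conditions}, which may be taken to be $\Re$ of a holomorphic function.

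Fix such $S_\alpha\subset Y$, $p_\alpha$, $q_\alpha$ and $\varphi=\Re h$ with $h$ holomorphic, $\rmd h|_{S_\alpha}$ Morse of index $0$ for the real part (a complex Morse function on $S_\alpha$). The proof of Theorem \ref{thm-limformula} gives, for $0<a\ll\delta_0$,
\begin{equation*}
M^\alpha\simeq \rsect\bigl(X_a;(F_a)_{\overline{B(r_1)}\cap\{-\varepsilon_0<\varphi\le\varepsilon_0\}}\bigr),
\end{equation*}
where $X_a=f^{-1}(a)$ is a complex submanifold of $X$ of dimension $n-1$. The key step is to compute the right side by Morse theory on $X_a$: $F_a=F|_{X_a}$, and for $a$ small and generic $X_a$ is non-characteristic for $F$ near $\overline{B(r_1)}$ by the transversality in the proof of Lemma \ref{ex-lem}, so $F_a[-1]$ is perverse on $X_a$. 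Moreover $\Lambda^\varphi$ meets $\msupp(F_a)\subset\Lambda_a'$ transversally, and only at finitely many points $p_1,\dots,p_k$ near $p_\alpha$ lying on conormals of strata $S\cap X_a$ at generic points. By the microlocal Morse theory of \cite{KS90}, a standard excision/Mayer–Vietoris argument along the levels of $\varphi$ then decomposes the cohomology as an iterated extension of the local Morse data $\rsect_{\{\varphi\ge\varphi(x_i)\}}(F_a)_{x_i}$. For $\varphi$ the real part of a holomorphic function with complex Morse critical points on the strata, each of these is, by perversity of $F_a[-1]$ and \cite[Theorem 9.5.2]{KS85} applied on $X_a$, concentrated in the same degree. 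Hence $M^\alpha$ is concentrated in one degree, and that degree matches the required one because $\dim_\CC(S\cap X_a)=\dim_\CC S-1$ while $\dim_\CC S_\alpha$ equals the limit dimension; this is where the shift $[-1]$ enters.

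The main obstacle is the middle step: ensuring that the critical points of $\varphi|_{S\cap X_a}$ near $p_\alpha$ are nondegenerate complex Morse points with generic covectors, so that the KS85 criterion applies on $X_a$, and then checking that the degree bookkeeping is uniform over different strata $S$ with $S_\alpha\subset\overline{S}$. To handle this, I would first perturb $h$ generically (Bertini–Sard, \cite[Proposition 8.3.12]{KS90}) so that the transversality established in the proof of Theorem \ref{thm-limformula} holds. Since $\dim_\RR$ of every stratum $L$ of $\SL$ adherent to $\Theta_\alpha$ is $\dim X_0+1$, the intersection points are generic in the strata of $\Lambda_a'$, and the degree of each local Morse group is $-\dim_\CC(S\cap X_a)$ up to the shift by the complex codimension contributed by the transversal direction, independent of $S$. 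Finally, vanishing of all other degrees for every $\alpha$ gives perversity of $\psi_f(F)[-1]$.
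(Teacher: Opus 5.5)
Your route is the paper's: Lemma \ref{lem-nearby}, then the chain of isomorphisms from the proof of Theorem \ref{thm-limformula} to move the local Morse group at $q_\alpha$ to $\rsect\bigl(X_a;(F_a)_{\overline{B(r_1)}\cap\{-\varepsilon_0<\varphi\le\varepsilon_0\}}\bigr)$, then microlocal Morse theory on $X_a$ with $\varphi=\Re\phi$ against the perverse sheaf $F_a[-1]$. However, the degree bookkeeping, which is the whole content of the claim, is wrong as written.

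The real part of a function with a complex Morse point on $S_\alpha$ cannot have Morse index $0$ there. For $\phi=x_1+(x_{d+1}^2+\cdots+x_n^2)$, the function $\Re\phi|_{S_\alpha}$ has index $\dim_\CC S_\alpha$. So the identification $M^\alpha\simeq\rsect(X_a;\cdots)$ you import from Theorem \ref{thm-limformula} is off by a shift, because there condition (ii) of \eqref{eq-conditions} (index $0$) was used to get $\rsect_{\{\varphi\ge0\}}(M^\alpha_{S_\alpha})_{q_\alpha}\simeq M^\alpha$. With $\varphi=\Re\phi$ one gets $M^\alpha[-\dim_\CC S_\alpha]$ instead.

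Nor can you keep an index-$0$ real function. Then the critical points of $\varphi|_{T_i\cap X_a}$ have uncontrolled real indices $\lambda_i$. The local Morse groups of $F_a[-1]$ then sit in varying degrees $\lambda_i-\dim_\CC(T_i\cap X_a)$, and nothing forces concentration. Your matching ``$\dim_\CC(S\cap X_a)=\dim_\CC S-1$ while $\dim_\CC S_\alpha$ equals the limit dimension'' does not work either. The strata $T_i\cap X_a$ carrying the points $p_i$ usually have dimension larger than $\dim_\CC S_\alpha$; in Example \ref{ex-CCisosing}, $S_\alpha=\{0\}$ while $T_i\cap X_a=Z_a$.

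The correct normalization, as in the paper, puts everything in degree $0$.
\begin{itemize}
\item The goal $\psi_f(F)[-1]\simeq V_{S_\alpha}[\dim_\CC S_\alpha]$ at generic $p\in\Theta_\alpha$ is equivalent to $\rsect_{\{\varphi\ge0\}}(\psi_g^{\RR}(F|_H)[-1])_{q_\alpha}$ being concentrated in degree $0$.
\item As in the proof of Theorem \ref{thm-limformula}, this group equals $\rsect\bigl(X_a;(F_a[-1])_{\overline{B(r_1)}\cap\{-\varepsilon_0<\varphi\le\varepsilon_0\}}\bigr)$.
\item At each $p_i\in T^\ast_{T_i\cap X_a}X_a$ you have transversality, from the dimension count on the strata of $\SL$ adherent to $\Theta_\alpha$. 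Then \cite[Lemma 7.3.2]{KS85} makes $\phi|_{T_i\cap X_a}$ complex Morse at $q_i$. So $\varphi|_{T_i\cap X_a}$ has index exactly $\dim_\CC(T_i\cap X_a)$, which cancels the shift in $F_a[-1]\simeq V^i_{T_i\cap X_a}[\dim_\CC(T_i\cap X_a)]$.
\item Hence every local Morse group is in degree $0$, uniformly in $i$, and so is the global group.
\end{itemize}
The $[-1]$ enters only through the perversity of $F_a[-1]$. No comparison between $\dim S_\alpha$ and the strata on $X_a$ is needed.

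This also requires the strata met on $X_a$ to be open pieces of complex submanifolds $T_i\cap X_a$, which your sketch assumes silently. The paper arranges it by taking $\mathcal{S}$ to be a subanalytic refinement of a complex stratification $\mathcal{T}$ adapted to $F$ and $\psi_f(F)$.
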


\begin{proof}
By \cite[Proposition 4.2.11]{Dim04} and 
\cite[Exercise VI\hspace{-1.2pt}I\hspace{-1.2pt}I.15]{KS90}
and resolution of singularities, we can easily show that 
$\psi_f(F) \simeq \psi_g^{\RR} (F|_H) \in \Db(Y)$
is complex constructible i.e. $\psi_f(F) \in \Dbc(Y)$
(see also \cite[Proposition 8.6.3]{KS90}).
Then we have only to prove that 
it satisfies the microlocal characterization of perversity in 
\cite[Theorem 9.5.2]{KS85}.
As in the proof of Theorem \ref{thm-limformula}, after 
cutting the support of $F|_H$ to be compact in 
$H=X_0 \times ( - \varepsilon, \varepsilon )$ for $0< \delta < \varepsilon$ 
we set $G\coloneq \left( \rsect_{\{ g>0\}} (F|_H)\right)_{X_0\times 
[-\delta, \delta]} \in \Db_{\RR-\mathrm{c}}(H)$ 
and define a subanalytic Whitney stratification $\mathcal{S}$ of 
$\supp G \subset H$, closed conic subanalytic Lagrangian subsets $\Lambda \subset T^\ast H,
\Lambda_0 \subset T^\ast X_0$ and $\Lambda_a \subset T^\ast X_a 
\medspace (0< a \ll \delta), 
\Theta_{\alpha} \subset \Lambda_0 \medspace (\alpha \in A)$,
and $S_{\alpha} \in \mathcal{S} \medspace (\alpha \in A)$. 
For some technical reason, to construct the Whitney stratification 
$\mathcal{S}$, we first take a complex analytic one $\mathcal{T}$ of 
$X$ adapted to the original $F\in \Dbc(X)$ 
and $\psi_f(F) \in \Dbc(Y)$ and take the part of $\supp G \subset H 
\subset X$ of its subanalytic refinement. 
Then we define a subset $A^{\prime} \subset A$ of $A$ by 
\begin{equation}
A^{\prime} \coloneq \Set*{ \alpha \in A}{\mathrm{dim}_\RR\Theta_\alpha
= \mathrm{dim}_\RR X_0 \text{ and } S_{\alpha}  
\text{ is an open subset of some } T \in \mathcal{T}}.
\end{equation}
Then it suffices to show that for each $\alpha \in A^{\prime}$
there exists a finite dimensional $k$-vector space $V^{\alpha}$
such that for the object $V_{S_{\alpha}}^{\alpha} \coloneq k_{S_{\alpha}} 
\otimes_{k_{X_0}} V_{X_0}^{\alpha} \in \Db_{\RR- \mathrm{c}}(X_0)$
of $\Db_{\RR-\mathrm{c}}(X_0)$
we have isomorphisms
\begin{equation}
    \psi_f(F)[-1] \simeq \psi_g^{\RR}(F|_H)[-1] \simeq 
V_{S_{\alpha}}^{\alpha}[\mathrm{dim}_{\CC} S_{\alpha}]
\end{equation}
in the localized category $\Db(X_0; p)$ for generic points $p \in 
\Theta_{\alpha}$ of $\Theta_{\alpha}$.
Fix $\alpha \in A^{\prime}$ and choose a reference point $p_{\alpha} 
\in \Theta_{\alpha} \subset T_{S_{\alpha}}^\ast X_0$
of $\Theta_{\alpha}$.
For the projection $\pi_{X_0} \colon T^\ast X_0 \longrightarrow X_0$
we set $q_{\alpha} \coloneq \pi_{X_0} (p_{\alpha}) \in S_{\alpha}$.
Then we take a holomorphic function $\phi$ defined on a neighborhood $W$
of $q_{\alpha}$ in $X_0$ satisfying the following conditions:
\begin{equation}
    \begin{cases}
        (\textup{i})\: \phi (q_{\alpha}) =0, \quad \mathrm{d} \phi 
(q_{\alpha}) =p_{\alpha} \in \Theta_{\alpha} \subset T_{S_{\alpha}}^\ast X_0, \\
        (\textup{ii})\: \phi|_{S_{\alpha}} \colon S_{\alpha} \longrightarrow \CC \medspace 
        \text{has a complex}
        \\
        \text{Morse (non-degenerate) critical point at} \medspace q_{\alpha} \in S_{\alpha}.
    \end{cases}
\end{equation}
Then by \cite[Lemma 7.2.2]{KS85} the complex (not necessarily conic) 
Lagrangian submanifold 
\begin{equation}
    \Lambda^{\phi} \coloneq \mathrm{d} \phi (W) =\{ \mathrm{d} \phi (x) \: 
\mid \: x\in W\} \qquad \subset T^\ast X_0
\end{equation}
intersects $\Theta_{\alpha} \subset T^\ast X_0$ transversally.
Now we take a holomorphic coordinate $x= (x_1, \ldots , x_n)$  
$ (n\coloneq \mathrm{dim}_{\CC} X_0)$
of $X_0$ such that $q_{\alpha} = \{ x=0\}, \medspace S_{\alpha} =\{ x_1= 
\cdots =x_d=0 \} \medspace (n-d= \mathrm{dim}_{\CC} S_{\alpha}),
\medspace p_{\alpha} = (0, \mathrm{d} x_1) \in T_{S_{\alpha}}^\ast X_0$
and 
\begin{equation}
    \phi (x) = x_1+ (x_{d+1}^2 + \cdots + x_n^2).
\end{equation}
Then we can easily see that the real analytic function $\varphi \coloneq \mathrm{Re} \, \phi$
satisfies the following conditions:
\begin{equation}
    \begin{cases}
        (\textup{i})\: \varphi (q_{\alpha})=0, \quad 2 \cdot 
\mathrm{d} \varphi (q_{\alpha}) = p_{\alpha} \in \Theta_{\alpha} \subset T_{S_{\alpha}}^\ast X_0, \\
        (\textup{ii})\: \varphi|_{S_{\alpha}} \colon S_{\alpha} \longrightarrow \RR \medspace 
        \text{has a Morse (non-degenerate) critical point at} 
\\ \medspace q_{\alpha} \in S_{\alpha} \medspace \text{with 
Morse index} \medspace n-d= \mathrm{dim}_{\CC} S_{\alpha}.
    \end{cases}
\end{equation}
By \cite[Proposition 6.6.1 (ii)]{KS90} we take an object $M^{\alpha} \in \Db_{\RR-\mathrm{c}} (\pt)$
of $\Db_{\RR-\mathrm{c}} (\pt)$ such that for the object 
$M_{S_{\alpha}}^{\alpha} \coloneq k_{S_{\alpha}} 
\otimes_{k_{X_0}} M_{X_0}^{\alpha} \in \Db_{\RR-\mathrm{c}} (X_0)$
we have an isomorphism $\psi_g^{\RR} (F|_H) [-1] \simeq M_{S_{\alpha}}^{\alpha}$
in $\Db (X_0; p_{\alpha})$.
Then we obtain isomorphims
\begin{equation}
    \rsect_{\{ \varphi \geq 0\}} \left( \psi_g^{\RR} (F|_H) [-1]\right)_{q_{\alpha}}
    \simeq \rsect_{\{ \varphi \geq 0\}} \left( M_{S_\alpha}^{\alpha} \right)_{q_{\alpha}} 
    \simeq M^{\alpha} [-\mathrm{dim}_{\CC} S_{\alpha}].
\end{equation}
Hence it suffices to show that we have the concentration 
\begin{equation}
    H^j \rsect_{\{ \varphi \geq 0\}} \left( \psi_g^{\RR} (F|_H) [-1]\right) \simeq 0 \quad (j\neq 0).
\end{equation}
On the other hand, taking $r_1 >0, \varepsilon_0 >0$ and $\delta_0 >0$ 
and defining $\Lambda_a^{\prime} \subset T^\ast X_a$ $(0< a \ll \delta )$ 
etc. as in the proof of Theorem \ref{thm-limformula} 
we obtain isomorphisms 
\begin{equation}
    \rsect_{\{ \varphi \geq 0\}} (\psi_g^{\RR}(F|_H))_{q_{\alpha}} \simeq 
\rsect \left( X_a; (F_a)_{\overline{B(r_1)} \cap \{ -\varepsilon_0 < \varphi \leq \varepsilon\}}\right)
\end{equation}
for $0< a \leq \delta_0$.
Recall that by the proof Theorem \ref{thm-limformula} we have 
$\Lambda_a^{\prime} \longrightarrow \Lambda_0^{\prime}$
as $a \to +0$ and for $0< a \ll \delta_0$ the Lagrangian 
subset $\Lambda_a^{\prime} \subset T^\ast X_a \simeq T^\ast X_0$
intersects 
$\mathrm{d} \varphi ( \overline{B(r_1)} \cap \{ |\varphi| 
\leq \varepsilon \} )$ $\subset T^\ast X_0$ 
only at finitely many points on a neighborhood of $p_{\alpha} 
\in \Theta_{\alpha} \subset T^\ast X_0$
in $T^\ast X_0$.
Fix such $0< a \ll \delta_0$ and set 
\begin{equation}
    \{ p_1, p_2, \ldots, p_m\} \coloneq \msupp ((F_a)_{\overline{B(r_1)}})
\cap \mathrm{d} \varphi (\overline{B(r_1)} \cap \{ |\varphi| \leq \varepsilon_0\}).
\end{equation}
Note that $F_a \in \Db(X_a)$ is complex constructible 
on $B(r_1) \subset Y=X_0 \simeq X_a$. 
Then by our construction of the subanalytic Whitney stratification 
$\mathcal{S}$, for any $1\leq i\leq m$ there exists a complex 
analytic sratum $T_i \in \mathcal{T}$ in $\mathcal{T}$ such that 
$T_i \subset X \setminus Y$ and $T_i \cap X_a$ is a complex submanifold of 
$X_a \simeq X_0=Y$, $S_{\alpha} \subset \overline{T_i}$ 
in $X$ and $p_i \in T_{T_i \cap X_a}^\ast (X_a)$. 
As $F_a [-1] =(F|_{X_a})[-1] \in \Dbc (X_a)$  $(0< a \ll \delta_0)$ being perverse,
for any $1\leq i\leq m$ there exists a finite dimensional $k$-vector space 
$V^i \not= 0$ such that for the object $V_{T_i \cap X_a}^i 
\coloneq k_{T_i \cap X_a} \otimes_{k_{X_a}} V_{X_a}^i \in \Db_{\RR-\mathrm{c}}(X_a)$
we have an isomorphism 
$F_a[-1] \simeq V_{T_i \cap X_a}^i [\mathrm{dim}_{\CC} (T_i\cap X_a)]$
in $\Db (T^\ast X_a ; p_i)$. 
Set $q_i \coloneq \pi_{X_0} (p_i) \in T_i \cap X_a \subset X_a \medspace (1\leq i \leq m)$
and recall that $T_{T_i \cap X_a}^\ast (X_a) \subset T^\ast X_a \simeq T^\ast X_0$ 
intersects $\mathrm{d} \varphi (B(r_1)) \subset T^\ast X_0$
transversally at the point $p_i$ for any $1\leq i \leq m$. 
Then by \cite[Lemma 7.3.2]{KS85} the holomorphic function 
$\phi|_{T_i \cap X_a} \colon T_i \cap X_a \longrightarrow \CC$
on $T_i \cap X_a$ has a complex Morse (non-degenerate) critical point at 
$q_i \in T_i \cap X_a$. This implies that the real analytic function 
$\mathrm{Re} (\phi|_{T_i\cap X_a}) =\varphi|_{T_i \cap X_a} 
\colon T_i \cap X_a \longrightarrow \RR$ on $T_i \cap X_a$ 
has a Morse (non-degenerate) critical point at $q_i \in T_i \cap X_a$ 
with Morse index 
$\mathrm{dim}_{\CC} (T_i\cap X_a)$. 
For any $1\leq i\leq m$ we thus obtain the concentration
\begin{equation}
    H^j \rsect _{\{ \varphi \leq \varphi (q_i)\}} (F_a [-1])_{q_i} \simeq 0 \quad (j\neq 0).
\end{equation}
Finally, by applying the microlocal Morse theory of Kashiwara and Schapira 
in \cite{KS90} to the Morse function $\varphi =\Re \phi$
and the $\RR$-constructible sheaf $(F_a[-1])_{\overline{B(r_1)}} \in \Db_{\RR-\mathrm{c}}(X_a)$
we obtain the desired concentration
\begin{equation}
    H^j \rsect (X_a ; (F_a[-1])_{\overline{B(r_1)} \cap \{ 
-\varepsilon_0 < \varphi \leq \varepsilon_0\}}) \quad (j\neq 0).
\end{equation}
This completes the proof.
\end{proof}

In the example below, we will show that our new proof of
Theorem \ref{thm-perverse} allows us to calculate the characteristic cycles of 
nearby cycle sheaves very explicitly.

\begin{example}\label{ex-CCisosing}
    Let $X=\CC_x^n$ be the $n$-dimensional complex vector space 
    and $f\colon X=\CC_x^n \longrightarrow \CC$ a non-constant holomorphic 
    function on it such that $f(0)=0$ and the complex hypersurface
    $Z\coloneq f^{-1}(0) \subset X=\CC_x^n$ has an isolated 
    singular point at the origin $0\in Z \subset X=\CC_x^n$. 
    Let $i_f \colon X \longhookrightarrow X\times \CC_t \medspace (x\longmapsto (x,f(x)))$
    be the graph embedding of $f$ and $\iota \colon Z=f^{-1}(0) \longhookrightarrow X=\CC_x^n$
    the inclusion map. Then by \cite[Proposition 4.2.11]{Dim04} 
    and \cite[Exercise V\hspace{-1.2pt}I\hspace{-1.2pt}I\hspace{-1.2pt}I.15]{KS90}
    we obtain an isomorphism
    \begin{equation}
        \psi_t ({i_f}_\ast (\CC_X [n])) [-1] \simeq \iota_\ast \psi_f (\CC_X[n])[-1].
    \end{equation}
    By applying Theorem \ref{thm-perverse} to the perverse 
sheaf ${i_f}_\ast (\CC_X [n]) \in \Dbc(X\times \CC)$
    on $X\times \CC$ and $t\colon X\times \CC \longrightarrow \CC$
    we thus see that $\iota_\ast \psi_f (\CC_X [n]) [-1] \in \Dbc (X)$
    is a perverse sheaf on $X=\CC_x^n$.
    In order to check the validity of Theorem \ref{thm-Gins}, we shall calculate 
    its characteristic cycle in two different ways.
    First, we calculate it by using Kashiwara's index theorem 
    in \cite{Kas83a}.
    Let $Z_{\textup{reg}} (= Z\setminus \{ 0\}) \subset Z$
    be the regular part of $Z=f^{-1}(0)$ and $H\subset X=\CC_x^n$
    a complex hyperplane passing through the origin $0\in X=\CC_x^n$
   such that we have 
    \begin{equation}
        \overline{T_{Z_{\textup{reg}}}^\ast X} \cap T_H^\ast X \subset T_X^\ast X
    \end{equation}
on a neighborhood of $0 \in X= \CC^n_x$. 
    For such $H\subset X=\CC^n$ the complex hypersurface $Z\cap H\subset H$
    in it has also an isolated singular point at the origin $0\in Z\cap H$
    and hence we denote its Milnor fiber (resp. Milnor number)
    at the origin $0\in Z\cap H$ by $M_{f|_H, 0} \medspace (\text{resp.} \medspace m_{Z\cap H})$.
    Then by Kashiwara's formula in \cite{Kas83a} the Euler obstruction 
    $\Eu_Z \colon X \longrightarrow \mathbb{Z}$ of $Z\subset X$
    is given by the formula 
    \begin{equation}
        \Eu_Z (q) =
        \begin{cases}
             1 \quad &(q\in Z_{\textup{reg}}), \\
             1+(-1)^n m_{Z\cap H} \quad &(q=0), \\
             0 \quad &(\textup{otherwise}).
        \end{cases}
    \end{equation}
   We denote the Milnor fiber (resp. Milnor number) 
    of the complex hypersurface $Z\subset X=\CC_x^n$ at the origin $0\in Z$
    by $M_{f,0}$ (resp. $m_Z$) and for a constructible sheaf 
    $F\in \Dbc(X)$ over $k= \CC$ on $X=\CC_x^n$ we define a $\mathbb{Z}$-valued 
    function $\chi(F) \colon X \longrightarrow \mathbb{Z}$
    on $X$ by 
    \begin{equation}
        \chi(F)(q) \coloneq \sum_{j\in \mathbb{Z}} (-1)^j 
\mathrm{dim}_{\CC} (H^jF)_q \quad (q\in X).
    \end{equation}
   Then by Lemma \ref{lemm-Miln} we obtain 
    \begin{equation}
        \chi \left( \iota_\ast \psi_f (\CC_X[n]) [-1]\right) (q) =
        \begin{cases}
            (-1)^{n-1} \quad & (q\in Z_{\textup{reg}}), \\
            (-1)^{n-1} + m_Z \quad & (q=0), \\
            0 \quad & (\textup{otherwise}).    
        \end{cases}
    \end{equation}
    From now, we shall use the theory of $\mathcal{D}$-modules and 
    for the standard terminology in it we refer to e.g. \cite{HTT08} and \cite{Kas03}. 
    Let $\mathcal{M}$ be the regular holonomic $\SD_X$-module 
    which corresponds to the perverse sheaf $\iota_\ast \psi_f (\CC_X[n])[-1] \in \Dbc(X)$
    via the Riemann-Hilbert correspondence:
    \begin{equation}
        \Sol_X (\mathcal{M})[n] \simeq \iota_\ast \psi_f (\CC_X[n])[-1].
    \end{equation}
    Then we have
    \begin{equation}
        \chi (\Sol_X(\mathcal{M}))(q) =
        \begin{cases}
            -1 \quad &(q\in Z_{\textup{reg}}), \\
            -1+(-1)^n m_Z \quad &(q=0), \\
            0 \quad & (\textup{otherwise})
        \end{cases}
    \end{equation}
    and the $\mathbb{Z}$-valued function $\chi(\Sol_X(\mathcal{M}))$ is 
decomposed as follows:
    \begin{equation}
        \chi(\Sol_X(\SM)) =-1\cdot \Eu_Z + (-1)^n (m_Z+m_{Z\cap H})\cdot \mathbf{1}_{\{ 0\}}.
    \end{equation}
    By Kashiwara's index theorem in \cite{Kas83a}, this implies that 
    the characteristic cycle $\CCyc (\SM)$ of $\SM$ is given by 
    the formula 
    \begin{equation}
        \CCyc (\SM) =1\cdot [\overline{T_{Z_{\textup{reg}}}^\ast X}] 
+ (m_Z+m_{Z\cap H}) \cdot [T_{\{ 0\}}^\ast X].
    \end{equation}
    We thus see that for any point $p$ of $T_{Z_{\textup{reg}}} X$
    (resp. $T_{\{0\}}^\ast X \setminus \overline{T_{Z_{\textup{reg}}}^\ast X}$)
    there exists an isomorphism
    \begin{equation}
       \Sol_X(\SM) [n] \simeq \CC_{Z_{\textup{reg}}} [n-1] \quad 
(\textup{resp.} \medspace \CC_{\{ 0\}}^{m_Z +m_{Z\cap H}})
    \end{equation}
    in the localized category $\Db(X; p)$.
    Consequently, we obtain a formula 
    \begin{align}\label{eq-char}
    \begin{split}
        \CCyc (\iota_\ast \psi_f (\CC_X [n])[-1]) 
        &=\CCyc (\Sol_X(\SM)[n]) \\
        &=(-1)^{n-1} [T_{Z_{\textup{reg}}}^\ast X] +(m_Z+m_{Z\cap H})\cdot [T_{\{ 0\}}^\ast X]
    \end{split}
    \end{align}
    of the characteristic cycle of the perverse sheaf $\iota_\ast \psi_f (\CC_X[n])[-1] \in \Dbc(X)$
    on $X$.
    Next, we shall calculate it by our new proof of Theorem \ref{thm-perverse}. 
    We have only to calculate its coefficient of the Lagrangian 
cycle $[T_{\{ 0\}}^\ast X] \in \sect(X;\mathcal{L}_X)$.
    Recall that by Theorem \ref{thm-Gins} we have 
    \begin{align}
        \CCyc (\iota_\ast \psi_f(\CC_X[n])[-1]) &= \CCyc (\psi_t({i_f}_\ast(\CC_X[n]))[-1]) \\
        &=\lim_{a\to +0} \CCyc ({i_f}_\ast (\CC_X[n-1])|_{t=a}) \\
        &=\lim_{a\to +0} \CCyc (\CC_{Z_a}[n-1]),
    \end{align}
    where for $a>0$ we set $Z_a \coloneq \{ x\in X=\CC^n \mid f(x)=a\} \subset X$.
    In order to calculate the coefficient of $[T_{\{ 0\}}^\ast X]$ in 
    this limit, we take a generic complex vector 
    \begin{equation}
        \begin{pmatrix}
            c_1\\
            c_2\\
            \vdots \\
            c_n
        \end{pmatrix}
        \in \CC^n \setminus \{ 0\}
    \end{equation}
    and define a complex linear form $\phi \colon X =\CC^n \longrightarrow \CC$
    by 
    \begin{equation}
        \phi(x) \coloneq \sum_{i=1}^n c_i x_i \quad (x=(x_1, \ldots ,x_n)\in X=\CC^n).
    \end{equation}
    For $0<a \ll 1$ let $N\geq 0$ be the number of the (complex Morse) 
    critical points of $\phi|_{Z_a} \colon Z_a \longrightarrow \CC$
    on a neighborhood of the origin $0\in Z_a \subset X=\CC^n$.
    Then by the proof of Theorem \ref{thm-perverse} we see that the coefficient of 
    $[T_{\{ 0\}}^\ast X]$ in the limit $\lim_{a\to +0} \CCyc (\CC_{Z_a}[n-1])$
    is equal to $N$.
    We thus obtain 
    \begin{equation}
        \CCyc \left( \iota_\ast \psi_f (\CC_X[n])[-1] \right) 
        = (-1)^{n-1}[T_{Z_{\textup{reg}}}^\ast X] + N\cdot [T_{\{ 0\}}^\ast X].
    \end{equation}
    Combining this with (\ref{eq-char}), we find a formula $N= m_Z+m_{Z\cap H}$,
    which can be explained by the classical results in L\^{e} \cite{Le73} as follows.
    We may assume that the complex hyperplane $H\coloneq \phi^{-1}(0) \subset X=\CC^n$
    satisfies the condition 
    \begin{equation}
        \overline{T_{Z_{\textup{reg}}}^\ast X} \cap T_H^\ast X \subset T_X^\ast X
    \end{equation}
on a neighborhood of $0 \in X= \CC^n_x$. 
   Then in \cite{Le73}, by using the Morse theory for the function 
    $|\phi| \colon Z_a \longrightarrow \RR$ on $Z_a$, L\^{e} proved that 
    $M_{f,0}$ is obtained from $M_{f|_H, 0}$ by attaching $(n-1)$-handles $N$ times.
    This implies that we have 
    \begin{equation}
        \chi(M_{f,0}) -\chi(M_{f|_H, 0}) =(-1)^{n-1}N,
    \end{equation}
    from which the formula $N=m_Z+ m_{Z\cap H}$ immediately follows.
\end{example}

From now, let us consider the characteristic cycles of vanishing cycle sheaves.
Let $X$ be a complex manifold and $f\colon X\longrightarrow\CC$ a non-constant
holomorphic function on it.
As before, set $Y\coloneq f^{-1}(0)\subset X$ and let $i\colon Y\longhookrightarrow X$
be the inclusion map.
Then in \cite{Del73} for $F\in \BDc(X)$ Deligne defined 
an object $\phi_f(F)\in\BDc(Y)$ which fits into the distinguished triangle
\begin{equation}
    i^{-1}F\longrightarrow\psi_f(F)\longrightarrow\phi_f(F)
    \overset{+1}{\longrightarrow}
\end{equation}
in $\BDc(Y)$ (see e.g. \cite[Section 8.6]{KS90}) for the precise definition).
We call it the vanishing cycle sheaf of $F$ along $f$.
In order to study its characteristic cycle, we assume that 
$f\colon X\longrightarrow\CC$ is smooth and hence $Y=f^{-1}(0)\subset X$ is a 
smooth complex hypersurface.
In this case, by \cite[Proposition 8.6.3]{KS90} for the section
$\sigma\colon Y\longhookrightarrow T_Y^\ast X$ ($x\longmapsto \rmd f(x)$) of
the conormal bundle $T_Y^\ast X\longrightarrow Y$ there exists an isomorphism
\begin{equation}
    \phi_f(F)\simeq \sigma^{-1}\mu_Y(F)[1].
\end{equation}
Note that our definition of $\phi_f(F)$ is different from the one in 
\cite[Definition 8.6.2]{KS90} by shift $[1]$.
Recall that at the end of Section \ref{sec-realnearby} we obtained the formulas of 
$\CCyc(\nu_Y(F))$ and $\CCyc(\mu_Y(F))$.
Moreover, after the works by Brylinski \cite{Bry86} and 
Kashiwara-Schapira \cite{KS90} etc., it is well-known that $\nu_Y(F)\in\BDc(T_Y X)$ and 
$\mu_Y(F)\in\BDc(T_Y^\ast X)$ are not only complex constructible but also
monodromic in the sense of Verdier \cite{Ver83} with respect to the standard 
actions of $\CC^\ast$ on $T_Y X$ and $T_Y^\ast X$ respectively.
This implies that $\msupp(\nu_Y(F))\subset T^\ast(T_Y X)$ 
(resp. $\msupp(\mu_Y(F))\subset T^\ast (T_Y^\ast X)$) is biconic i.e. invariant under
the two standard actions of $\CC^\ast$ on $T^\ast (T_Y X)$ 
(resp. $T^\ast(T_Y^\ast X)$) as in \cite[(5.5.8)]{KS90}.
We thus see that the morphism $\sigma\colon Y\longhookrightarrow T_Y^\ast X$
is non-characteristic for $\mu_Y(F)\in\BDc(T_Y^\ast X)$ and obtain a formula for 
$\CCyc(\phi_f(F))$.
In order to see it more explicitly, let 
$(x,\tau)=(x_1,\dots,x_n,\tau)$ be a local coordinate of $X$ such that
$f(x,\tau)=\tau$ and $Y=\{\tau=0\}$ and $(x,\tau)$ (resp. $(x,\tau^\ast)$)
the coordinate of $T_Y X$ (resp. $T_Y^\ast X$) associated to it.
Then by \cite[Proposition 5.5.1]{KS90} we obtain an identification
\begin{equation}
\begin{tikzcd}[row sep=0.1cm]
    \Phi_{T_Y X} \colon T^\ast(T_Y X) \ar[r,"\sim"] &
    T^\ast(T_Y^\ast X) \\[-8pt]
    \: \qquad \raisebox{-2pt}{\rotatebox{90}{$\in$}} & 
\raisebox{-2pt}{\rotatebox{90}{$\in$}} \\[-8pt]
    \quad \qquad (x,\tau;x^\ast,\tau^\ast) \ar[r,mapsto] &
    (x,\tau^\ast; x^\ast,-\tau)
\end{tikzcd}
\end{equation}
by which we have $\CCyc(\mu_Y(F))=\CCyc(\nu_Y(F))$ (see Section \ref{sec-realnearby}).
For the local coordinate $(x,\tau;x^\ast \tau^\ast)$ of $T^\ast(T_Y X)$ set 
$Z\coloneq\{\tau^\ast=1\}$ ($\simeq T^\ast Y\times\CC_\tau$) $\subset T^\ast(T_Y X)$
and let $h\colon Z\longhookrightarrow T^\ast (T_Y X)$ be the inclusion map.
Note that on a neighborhood of the complex hypersurface 
$Z=\{\tau^\ast=1\}\subset T^\ast(T_Y X)$ in $T^\ast(T_Y X)$ the support of the 
Lagrangian cycle $\CCyc(\nu_Y(F))$ is contained in 
$\{\tau=0\}\subset T^\ast(T_Y X)$.
This implies that its pull-back $h^\ast\CCyc(\nu_Y(F))$ by 
$h\colon Z\longhookrightarrow T^\ast(T_Y X)$ is supported on $T^\ast Y\times\{0\}$
($\simeq T^\ast Y$) $\subset Z$.
We thus can regard it as a Lagrangian cycle in $T^\ast Y$ and obtain
a formula $\CCyc(\phi_f(F))=-h^\ast\CCyc(\nu_Y(F))$.
By the coordinate $(x,\tau)$ of $X$, for $a>0$ consider the morphism
$\Phi_a\colon X\longrightarrow X$ ($(x,\tau)\longmapsto(x,a\tau)$)
and recall the commutative diagram 
\begin{equation}
\begin{tikzcd}
    T_Y X \ar[r,hook] \ar[d] & 
    \tl{X}_Y \ar[d,"p"] &
    \Omega=t^{-1}(\RR_{>0}) \ar[l,hook'] \ar[ld,"\tl{p}"] \\
    Y \ar[r,"i",hook] & X & 
\end{tikzcd}
\end{equation}
associated to the normal deformation $\tl{X}_Y$ of $X$ along $Y$. 
Then for the coordinate $(x,\tau,t)$ of $\tl{X}_Y$ and $a>0$ we have an isomorphism
\begin{equation}
    (\tl{p}^{-1}F)\vbar_{t^{-1}(a)}\simeq \Phi_a^{-1}(F)
\end{equation}
and obtain the following result by Lemma \ref{lem-pullPhi} and Theorem \ref{thm-CCspe}. 

\begin{theorem}\label{thm-CCvanish}
    For $F\in\BDc(X)$ we have 
    \begin{equation}
        \CCyc(\phi_f(F))
= - h^\ast \Bigl( \lim_{a\to+0} \CCyc(\Phi_a^{-1}(F)) \Bigr) 
=-\lim_{a\to+0}h^\ast\CCyc(\Phi_a^{-1}(F)),
    \end{equation}
    where $h\colon T^\ast Y\times \CC\longhookrightarrow T^\ast X$
    $\bigl(((x;x^\ast),\tau)\longmapsto(x,\tau;x^\ast,1)\bigr)$ is the inclusion map.
\end{theorem}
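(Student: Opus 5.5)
Starting from the formula $\CCyc(\phi_f(F))=-h^\ast\CCyc(\nu_Y(F))$ established just before the statement, we substitute Theorem \ref{thm-CCspe}, which gives $\CCyc(\nu_Y(F))=\lim_{a\to+0}\CCyc(\tl{F_a})$ with $\tl{F_a}=(\tl{p}^{-1}F)|_{t^{-1}(a)}$. In the coordinates $(x,\tau,t)$ of $\tl{X}_Y$, the fiber $t^{-1}(a)$ for $a>0$ is identified with $X$, and the isomorphism $\tl{F_a}\simeq\Phi_a^{-1}(F)$ was recorded above. Consequently $\CCyc(\tl{F_a})=\CCyc(\Phi_a^{-1}(F))$ as Lagrangian cycles in $T^\ast X\simeq T^\ast(T_YX)$. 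Here we use that over the coordinate chart $T_YX\simeq X$, the relative cotangent bundle $T^\ast(\tl{X}_Y/\RR)$ restricted to $t^{-1}(J)$ is $T^\ast X\times J$, and that the family $\CCyc_I(\tl{p}^{-1}F|_{t^{-1}(I)})$ restricts fiberwise to these cycles by Theorem \ref{thm-restofrelCC}. Together with $\CCyc(\phi_f(F))=-h^\ast\CCyc(\nu_Y(F))$, this yields the first equality
\begin{equation*}
\CCyc(\phi_f(F))=-h^\ast\Bigl(\lim_{a\to+0}\CCyc(\Phi_a^{-1}(F))\Bigr).
\end{equation*}
We also note that $\Phi_{T_YX}$ sends $\{\tau^\ast=1\}$ to the locus where the fiber coordinate of $T_Y^\ast X$ equals $1$, which is the image of $\sigma$, since $\rmd f=\rmd\tau$. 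So $h$ as written in $T^\ast X$ matches the earlier $h\colon Z\hookrightarrow T^\ast(T_YX)$.

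For the second equality we commute the pull-back $h^\ast$ with the limit, and the tool for this is Lemma \ref{lem-pullPhi}. Take $M=T^\ast X$, $N=Z\simeq T^\ast Y\times\CC_\tau$, $g=h$, and let $A_J$ be the closed subanalytic family $\Lambda_J\subset T^\ast X\times J$ carrying the cycles, constructed as in Lemma \ref{ex-lem}. The local system is $L_J=\orsh_{T^\ast X/X}\boxtimes\CC_J$, or $\pi^{-1}\omega_X\boxtimes\CC_J$ in degree form. Lemma \ref{lem-pullPhi} gives $\Phi\circ\phi^\ast(h_I)=\phi^\ast(h)\circ\Phi$ on the relevant cohomology with supports. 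We then identify $\phi^\ast(h)$ in degree $-n$ (resp. $-(n+1)$ for the family) with the pull-back $h^\ast$ of Lagrangian cycles, i.e. restriction to the hypersurface $Z$ followed by the identification of the twisting sheaves $\pi_X^{-1}\omega_X|_Z$ with $\pi_Y^{-1}\omega_Y\boxtimes\omega_{\CC}$ up to the fixed orientation conventions. The limit of the restricted family is then taken in $Z$, where the supports become subsets of $T^\ast Y\times\CC\times J$.

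The main obstacle is the dimension and transversality bookkeeping that makes $h^\ast$ meaningful, both at $a=0$ and uniformly for small $a>0$. We need that the family $\Lambda_a=\msupp(\Phi_a^{-1}F)$ meets $Z$ cleanly, so that $h^{-1}(\Lambda_J)$ has dimension $\le\dim_\RR T^\ast Y+1$ with fibers of dimension $\le\dim_\RR T^\ast Y$, and that the pulled-back cycle in degree $-n$ is the geometric restriction. I would first verify this at $a=0$ from biconicity: near $Z$, the cycle $\CCyc(\nu_Y F)$ is supported in $\{\tau=0\}$ and is invariant under the $\CC^\ast$-action, which makes $\tau^\ast$ a non-characteristic coordinate there. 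I would then propagate this to small $a>0$ using the convergence $\Lambda_a\to\Lambda_0$ from the Whitney condition, as in the proof of Lemma \ref{ex-lem}. Non-characteristicity is an open condition, so after shrinking $\varepsilon$ and restricting to relatively compact opens the hypotheses of Lemma \ref{lem-pullPhi} and the degree identification hold. This gives $h^\ast\lim=\lim h^\ast$ and completes the argument.
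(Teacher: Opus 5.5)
Your proposal is correct and is essentially the paper's argument: the first equality is $\CCyc(\phi_f(F))=-h^\ast\CCyc(\nu_Y(F))$ combined with Theorem \ref{thm-CCspe} and $(\tl{p}^{-1}F)|_{t^{-1}(a)}\simeq\Phi_a^{-1}(F)$, and the second is Lemma \ref{lem-pullPhi} applied to $g=h$. The paper does not spell out your transversality bookkeeping, but two details should be fixed: the bounds should read $\dim h^{-1}(\Lambda_J)\leq\dim_\RR Y+1$ with fibers of dimension $\leq\dim_\RR Y=\frac{1}{2}\dim_\RR T^\ast Y$ (not $\dim_\RR T^\ast Y$), and transversality of $\{\tau^\ast=1\}$ to $\msupp(\Phi_a^{-1}(F))$ holds for each $a>0$ automatically by $\CC^\ast$-conicness, so no openness argument is needed.
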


\begin{example}\label{ex-1a} 
Let us consider the situation in Example \ref{ex-CCisosing} and use the notations in it.
Then by \cite[Proposition 4.2.11]{Dim04} and 
\cite[Exercise V\hspace{-1.2pt}I\hspace{-1.2pt}I\hspace{-1.2pt}I.15]{KS90}
we obtain an isomorphism
\begin{equation}
    \phi_t(i_{f\ast}(\CC_X)[n])[-1]
    \simeq \iota_\ast \phi_f(\CC_X[n])[-1].
\end{equation}
Now our objective here is to calculate the characteristic cycle of 
the perverse sheaf $\iota_\ast \phi_f(\CC_X[n])[-1] \in \BDc(X)$.
For $a>0$ consider the mosphisms 
$\Phi_a\colon X\times\CC\longrightarrow X\times\CC$ ($(x,t)\longmapsto (x,at)$)
and $i_{f/a}\colon X\longhookrightarrow X\times\CC$
($x\longmapsto(x,\frac{1}{a}f(x))$).
Then for $a>0$ we have an isomorphism
\begin{equation}
    \Phi_a^{-1}\bigl( i_{f\ast}(\CC_X[n])[-1] \bigr)
    \simeq i_{f/a \ast}(\CC_X[n])[-1].
\end{equation}
Moreover, we have
\begin{equation}
    \CCyc\bigl( i_{f/a\ast}(\CC_X[n])[-1]\bigr)
    =(-1)^{n-1}[T_{\Gamma_a}^\ast(X\times\CC)],
\end{equation}
where we set $\Gamma_a\coloneq i_{f/a}(X)$ ($\simeq X$) $\subset X\times\CC$.
Since we assume here that $f\colon X=\CC^n\longrightarrow\CC$ has an isolated singular
point at the origin $0\in Z=f^{-1}(0)$,
$\Gamma_a$ intersects $X\times\{0\}$ transversally on 
$(X\setminus\{0\}) \times\{0\}$ 
and Theorem \ref{thm-CCvanish} implies that the support of the characteristic cycle of 
$\iota_\ast\phi_f(\CC_X[n])[-1]$ is contained in $T_{\{0\}}^\ast X\subset T^\ast X$.
Let us show that its coefficient of $[T_{\{0\}}^\ast X]$ is equal to $m_Z$.
For this purpose, we take a generic complex vector 
\begin{equation}
    \begin{pmatrix}
        c_1\\
        c_2\\
        \vdots \\
        c_n
    \end{pmatrix}
    \in \CC^n \setminus \{ 0\}
\end{equation}
and define a complex linear form 
$\tl{\phi}\colon X\times\CC=\CC^{n+1}\longrightarrow\CC$ by
\begin{equation}
    \tl{\phi}(x,t)\coloneq t+\sum_{i=1}^n c_i x_i
    \quad ((x,t)\in X\times\CC).
\end{equation}
Then for $a>0$ the restriction 
$\tl{\phi}\vbar_{\Gamma_a}\colon \Gamma_a\longrightarrow\CC$ of $\tl{\phi}$
to $\Gamma_a\simeq X=\CC^n$ is equal to
\begin{equation}
    \Gamma_a\simeq X=\CC_x^n
    \longrightarrow \CC
    \quad \Bigl( x\longmapsto \frac{1}{a}\{f(x)+a\sum_{i=1}^n c_i x_i\}\Bigr).
\end{equation}
Note that for $0<a\ll1$ the function $f(x)+a\sum_{i=1}^n c_i x_i$ is a small
linear perturbation i.e. a Morsification of $f(x)$. 
Then by \cite[Proposition 2.2]{Bro88} 
we see that for $0<a<\ll1$ all of its critical points are 
complex Morse (non-degenerate) 
and their number is equal to $m_Z$. 
Hence, by the proof of Theorem \ref{thm-perverse} we see that the coefficient 
of $[T_{\{ 0\} \times \{ 0\}} ^\ast (X\times \CC)]$
    in the limit 
    \begin{equation}
        \lim_{a\to +0} \CCyc \left( \Phi_a^{-1} ({i_f}_\ast (\CC_X[n])[-1])\right)
    \end{equation}
    is equal to $-m_Z$. 
    By Theorem \ref{thm-CCvanish}, this implies that we have 
\begin{equation}
    \CCyc\bigl( \iota_\ast\phi_f(\CC_X[n])[-1] \bigr)
    =m_Z\cdot[T_{\{0\}}^\ast X]
\end{equation}
as expected.
\end{example}

\begin{example}
    Let $X=\CC^3$ be the $3$-dimensional complex vector space 
    and $(x,y,z)$ its linear coordinate and define a 
    holomorphic function $f\colon X=\CC^3 \longrightarrow \CC$
    by $f(x,y,z)\coloneq xyz \: ((x,y,z)\in X=\CC^3)$.
    Set $Z\coloneq f^{-1}(0) \subset X=\CC^3$
    and let $\iota \colon Z \longhookrightarrow X=\CC^3$
    be the inclusion map. In this situation, we shall calculate the 
    characteristic cycle of the perverse sheaf 
    $\iota_\ast \phi_f(\CC_X [3])[-1] \in \Dbc(X)$ on $X$ in two 
    different ways.
    First, we calculate it by using Kashiwara's index theorem.
    Set $H_1\coloneq \{ x=0\}, H_2\coloneq \{ y=0\}, H_3\coloneq\{ z=0\},
    L_1\coloneq \{ y=z=0\}, L_2\coloneq \{ x=z=0\}, L_3\coloneq \{ x=y=0\}$.
    Then we have $Z=H_1\cup H_2 \cup H_3$ and the singular set 
    $Z_{\textup{sing}}$ of $Z$ is $L_1\cup L_2 \cup L_3$.
    Recall that the support of the vanishing cycle sheaf 
    $\phi_f(\CC_X) \in \Dbc(Z)$ is contained in $Z_{\textup{sing}}
    =L_1\cup L_2\cup L_3 \subset Z$.
    For a point $q\in Z$ of $Z=f^{-1}(0)$ let $M_{f,q}\subset X\setminus Z$
    be the Milnor fiber of $f$ at it.
    Then it is well-known that $M_{f,q}$
    is homotopic to $\pt$ (resp. $S^1, S^1\times S^1$) if 
    $q\in Z\setminus Z_{\textup{sing}}$ (resp. $q\in Z_{\textup{sing}}\setminus \{ 0\},\; q=0$) 
    (see Oka \cite[Example (3.7)]{Oka} etc.). 
    This implies that we have 
    \begin{equation}
        \chi \left( \iota_\ast \phi_f (\CC_X[3])[-1]\right) (q) =
        \begin{cases}
            0 \quad & (q\in Z\setminus Z_{\textup{sing}}), \\
            -1 \quad & (q\in Z_{\textup{sing}}), \\
            0 \quad & (\text{otherwise})
        \end{cases}
    \end{equation}
    and hence 
    \begin{equation}
        \chi \left( \iota_\ast \phi_f (\CC_X[3])[-1]\right) = 
-\textbf{1}_{L_1} -\textbf{1}_{L_2} -\textbf{1}_{L_3} +2\cdot \textbf{1}_{\{ 0\}}.
    \end{equation}
    Then for the regular holonomic $\SD_X$-module $\SN$ such that 
    \begin{equation}
        \Sol_X(\SN)[3] \simeq \iota_\ast \phi_f(\CC_X[3])[-1],
    \end{equation}
    by applying Kashiwara's index theorem to the equality 
    \begin{equation}
        \chi (\Sol_X(\SN)) =\textbf{1}_{L_1} + \textbf{1}_{L_2} + \textbf{1}_{L_3} 
- 2\cdot \textbf{1}_{\{ 0\}}
    \end{equation}
    we obtain 
    \begin{equation}
        \CCyc(\SN) = \sum_{i=1}^3 \ [T_{L_i}^\ast X] + 2\cdot [T_{\{0\}}^\ast X].
    \end{equation}
This implies that for the perverse sheaf 
$\iota_\ast \phi_f (\CC_X[3])[-1] \in \Dbc (X)$ on $X= \CC^3$ 
we have 
    \begin{equation}
        \CCyc( \iota_\ast \phi_f (\CC_X[3])[-1] ) = 
- \sum_{i=1}^3 \ [T_{L_i}^\ast X] + 2\cdot [T_{\{0\}}^\ast X].
    \end{equation}
Next we shall calculate it by using our (new) proof Theorem \ref{thm-perverse} 
and Theorem \ref{thm-CCvanish}. 
    Let $i_f\colon X \longhookrightarrow X\times \CC_t \: (x\longmapsto (x,f(x)))$
    be the graph embedding of $f$.
    Then by \cite[Proposition 4.2.11]{Dim04} and
 \cite[Exercise V\hspace{-1.2pt}I\hspace{-1.2pt}I\hspace{-1.2pt}I.15]{KS90}
    there exists an isomorphism 
    \begin{equation}
        \phi_t \left( {i_f}_\ast (\CC_X[3])\right) [-1] \simeq \iota_\ast \phi_f (\CC_X[3])[-1].
    \end{equation}
    It suffices to show that the coefficient of $[T_{\{ 0\}}^\ast X]$
    in the characteristic cycle of the perverse sheaf $\phi_t
 \left( {i_f}_\ast (\CC_X[3])\right) [-1] \in \Dbc(X)$
    is equal to $2$.
    For $a>0$ consider the morphisms $\Phi_a \colon X\times 
\CC \longrightarrow X\times \CC \: ((x,t)\longmapsto (x,at))$
    and $i_{f/a}\colon X\longhookrightarrow X\times \CC \: 
(x\longmapsto (x,\frac{1}{a} f(x)))$
    as in Example \ref{ex-1a}. 
    Then for $a>0$ we have an isomorphism 
    \begin{equation}
        \Phi_a^{-1} \left( {i_f}_\ast (\CC_X[3])[-1]\right) = [T_{\Gamma_a}^\ast (X\times \CC)],
    \end{equation}
    where we set $\Gamma_a \coloneq i_{f/a} (X) (\simeq X) \subset X\times \CC$.
    Now we define a holomorphic function $\widetilde{\phi} \colon 
X\times \CC =\CC^4 \longrightarrow \CC$
    by
    \begin{equation}
        \widetilde{\phi} (x,y,z,t)=t-x-y-z \quad ((x,y,z,t)\in X\times \CC).
    \end{equation}
    Then for $a>0$ the restriction $\widetilde{\phi}|_{\Gamma_a} 
\colon \Gamma_a \longrightarrow \CC$ of $\widetilde{\phi}$
    to $\Gamma_a \simeq X =\CC^3$ is equal to 
    \begin{equation}
        \Gamma_a \simeq X=\CC^3 \longrightarrow \CC \quad ((x,y,z)
\longmapsto \frac{1}{a} (xyz-ax-ay-az)).
    \end{equation}
    For $a>0$ let us show that $\widetilde{\phi}|_{\Gamma_a}$
    has two (complex Morse) critical points. For this purpose,
    define a holonomic function $g\colon X=\CC^3 \longrightarrow \CC$
    by $g(x,y,z)\coloneq xyz-ax-ay-az \: ((x,y,z)\in X=\CC^3)$.
    Then we have 
    \begin{align}
        \mathrm{d} g(x,y,z) =0 &\iff 
        \begin{cases}
            yz=a \\
            xz=a \\
            zx=a \\
        \end{cases} \\
        &\Longrightarrow \: (xyz)^2 =(yz)(xz)(zx)=a^3 \: \Longrightarrow
        \: xyz=\pm a^{\frac{3}{2}}.
    \end{align}
    We thus obtain 
    \begin{equation}
        (x,y,z)=\left( \frac{xyz}{yz}, \frac{xyz}{xz}, 
\frac{xyz}{xy} \right) =\pm (\sqrt{a}, \sqrt{a}, \sqrt{a})
    \end{equation}
    as we desired.
    Now, by the proof of Theorem \ref{thm-perverse} we see that the coefficient 
of $[T_{\{ 0\} \times \{ 0\}} ^\ast (X\times \CC)]$
    in the limit 
    \begin{equation}
        \lim_{a\to +0} \CCyc \left( \Phi_a^{-1} ({i_f}_\ast (\CC_X[3])[-1])\right)
    \end{equation}
    is equal to $-2$.
    By Theorem \ref{thm-CCvanish}, this implies that the coefficient of $[T_{\{ 0\}}^\ast X]$
    in the characteristic cycle of $\phi_t ({i_f}_\ast (\CC_X[3]))[-1]$
    is equal to $2$ as expected.
\end{example}

\appendix

\section{Some results on commutative diagrams for the six operations}
In this appendix, we prepare some auxiliary results on commutative diagrams.
Note that the following Cartesian diagram of topological spaces
\begin{equation}
\begin{tikzcd}
      A \ar[r,"f"] \ar[d,"p"] \ar[dr,"\square",phantom]  &  B \ar[d,"q"'] \\
      P \ar[r,"h"'] & Q .
\end{tikzcd}
\end{equation}
induces a natural morphism
\begin{equation}
      h^{-1} \mathrm{R}q_\ast \to \mathrm{R} p_\ast p^{-1} h^{-1} \mathrm{R} q_\ast \simto \mathrm{R} p_\ast f^{-1} q^{-1} \mathrm{R} q_\ast
   \to \mathrm{R} p_\ast f^{-1}.
\end{equation}

\begin{lemma}\label{lem-bs}
Assume that we are given the following Cartesian diagram of topological spaces:

\begin{equation}
\begin{tikzcd}
    A \ar[r,"f"] \ar[d,"p"] \ar[dr,"\square",phantom] &  
      B \ar[r,"g"] \ar[d,"q"] \ar[dr,"\square",phantom,pos=0.55] 
      & C \ar[d,"r"] \\
      P \ar[r,"h"'] & Q \ar[r,"i"'] & R .
\end{tikzcd}
\end{equation}
Then the following diagram commutes:
\begin{equation}
\begin{tikzcd}
      (h^{-1} i^{-1}) \mathrm{R} r_\ast \ar[r] \ar[d] &
      h^{-1} \mathrm{R} q_\ast g^{-1} \ar[d] \\
      \mathrm{R} p_\ast(f^{-1} g^{-1}) \ar[r, equal] & \mathrm{R} p_\ast f^{-1} g^{-1} .
\end{tikzcd}
\end{equation}

\end{lemma}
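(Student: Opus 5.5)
We reduce everything to the adjunction units and counits. Write $\eta^{q}\colon \id\to \rmR q_\ast q^{-1}$, $\varepsilon^{q}\colon q^{-1}\rmR q_\ast\to\id$, and similarly for $r$ and $p$. Unwinding the definition given just before the lemma, the top horizontal arrow is the composite
\begin{equation*}
i^{-1}\rmR r_\ast \xrightarrow{\eta^{q}} \rmR q_\ast q^{-1} i^{-1}\rmR r_\ast \simeq \rmR q_\ast g^{-1} r^{-1}\rmR r_\ast \xrightarrow{\varepsilon^{r}} \rmR q_\ast g^{-1},
\end{equation*}
with $h^{-1}$ applied on the left. The right vertical arrow is the base change morphism of the left square, applied to $g^{-1}$. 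The left vertical arrow is the base change morphism of the outer Cartesian square, formed with the composite morphisms $i\circ h$ and $g\circ f$ and the identification $(ih)^{-1}\simeq h^{-1}i^{-1}$.

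We write each of the two paths as a single string of units, counits and the connection isomorphisms $h^{-1}q^{-1}\simeq p^{-1}f^{-1}$ and $i^{-1}r^{-1}\simeq q^{-1}g^{-1}$. The path through the top-right corner reads
\begin{equation*}
h^{-1}i^{-1}\rmR r_\ast \to h^{-1}\rmR q_\ast q^{-1}i^{-1}\rmR r_\ast \to h^{-1}\rmR q_\ast g^{-1}r^{-1}\rmR r_\ast \to h^{-1}\rmR q_\ast g^{-1}
\end{equation*}
\begin{equation*}
\to \rmR p_\ast p^{-1}h^{-1}\rmR q_\ast g^{-1} \to \rmR p_\ast f^{-1}q^{-1}\rmR q_\ast g^{-1} \to \rmR p_\ast f^{-1}g^{-1}.
\end{equation*}
The path through the bottom-left corner is
\begin{equation*}
h^{-1}i^{-1}\rmR r_\ast\to \rmR p_\ast p^{-1}h^{-1}i^{-1}\rmR r_\ast\simeq \rmR p_\ast f^{-1}g^{-1}r^{-1}\rmR r_\ast \to \rmR p_\ast f^{-1}g^{-1}.
\end{equation*}

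To compare the two, we use naturality of $\eta^{p}$ to move the unit $\eta^{p}$ in the first path to the very front. After this move, the first path becomes $\eta^p$ followed by $\rmR p_\ast p^{-1}$ applied to (the first three arrows), then the connection isomorphism for the left square, and then $\varepsilon^q$. Next we use naturality of the connection isomorphism and of $\varepsilon^{q}$ to slide $\varepsilon^{r}$ to the end. What remains in the middle is
\begin{equation*}
p^{-1}h^{-1}\eta^{q} \quad\text{followed by}\quad f^{-1}\varepsilon^{q}q^{-1},
\end{equation*}
interleaved with the connection isomorphisms. By the triangle identity $\varepsilon^{q}q^{-1}\circ q^{-1}\eta^{q}=\id_{q^{-1}}$, this middle piece collapses to the identity.

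After this collapse, the two paths agree exactly, provided one fact holds. The connection isomorphism $(ih)^{-1}r^{-1}\simeq (gf)^{-1}$ must equal the composite of the two square isomorphisms, namely the one for the right square followed by the one for the left square. This is the standard coherence of the pseudofunctor $(\cdot)^{-1}$ with respect to composition, and it holds because $(\cdot)^{-1}$ is a pseudofunctor. The same statement is behind the corresponding compatibility in \cite[Proposition 2.5.11]{KS90}-type arguments.

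The only delicate step is the bookkeeping: we must check that the sliding operations above are genuinely instances of naturality, and that no hidden identification of $p^{-1}h^{-1}$ with $(hp)^{-1}$ breaks compatibility. I would handle this by drawing the full diagram with one row per functor and verifying each small cell, either as a naturality square or as the triangle identity. All functors here are exact or derived in a standard way, so no further issue arises. The argument works verbatim for sheaves in the derived category.
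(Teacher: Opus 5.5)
Your proposal is correct and takes essentially the paper's approach. Both expand the arrows into units and counits, move $\eta^{p}$ forward and $\varepsilon^{r}$ backward by naturality (the paper lays this out as a grid of naturality squares), and close the remaining cell with the triangle identity $\varepsilon^{q}q^{-1}\circ q^{-1}\eta^{q}=\id_{q^{-1}}$. Your explicit appeal to the coherence of the composition isomorphisms for $(\cdot)^{-1}$ is a step the paper uses only implicitly.
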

\begin{proof}
   It is enough to prove the commutativity of the diagram below:
    \begin{equation}
    \begin{tikzcd}    
         h^{-1} i^{-1} \mathrm{R} r_\ast \ar[d] \ar[r] & 
         h^{-1} \mathrm{R} q_\ast g^{-1} r^{-1} \mathrm{R} r_\ast \ar[d] \ar[r] &
         h^{-1} \mathrm{R} q_\ast g^{-1} \ar[d] \\
         \mathrm{R} p_\ast f^{-1} q^{-1} i^{-1} \mathrm{R} r_\ast \ar[rd,equal, "(\#)"] \ar[r] &
         \mathrm{R} p_\ast f^{-1} q^{-1} \mathrm{R} q_\ast q^{-1} i^{-1} \mathrm{R} r_\ast \ar[d] \ar[r] &
         \mathrm{R} p_\ast f^{-1} q^{-1} \mathrm{R} q_\ast g^{-1} \ar[d] \\
         {}& \mathrm{R} p_\ast f^{-1} q^{-1} i^{-1} \mathrm{R} r_\ast \ar[r] &
         \mathrm{R} p_\ast f^{-1} g^{-1} ,
    \end{tikzcd}
    \end{equation}
where all the arrows are units and counits. Therefore the statement follows from the
functoriality of them and the triangle identity applied to $(\#)$.
\end{proof}

\begin{lemma}\label{lem-intunit}
We consider the Cartesian diagram (\ref{eq-app2}) of manifolds, where $f$ is smooth and $g$ is a closed embedding.
\begin{equation}\label{eq-app2}
    \begin{tikzcd}
        Y^\prime \ar[r,hook,"g^\prime"] \ar[d,"f^\prime"] \ar[dr,phantom,"\square",pos=0.55] &
        Y \ar[d,"f"] \\
        X^\prime \ar[r,hook,"g"'] & 
        X,
\end{tikzcd}
\end{equation}
Then the following diagram commutes:
\begin{equation}
\begin{tikzcd}
    \rmR f_!f^! \ar[r] \ar[d] & 
    \rmR g_\ast g^{-1} \rmR f_!f^! \ar[r] \ar[d,"\sim",sloped] & 
    \rmR g_\ast g^{-1} \\
    \rmR f_! \rmR g_\ast^\prime g^{\prime-1} f^! \ar[r,dash,"\sim"] &
    \rmR g_\ast \rmR f_!^\prime g^{\prime-1} f^! \ar[r,"\sim"]  &
    \rmR g_\ast \rmR f_!^\prime f^{\prime!} g^{-1}. \ar[u] 
\end{tikzcd}
\end{equation}
\end{lemma}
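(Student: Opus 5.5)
The plan is to split the diagram into a left square and a right square and check each by unwinding how the two middle isomorphisms are defined. The left square involves the units of $g$ and $g^\prime$ and the base change isomorphism. The right square involves the counits $\rmR f_!f^!\to\id$ and $\rmR f^\prime_!f^{\prime!}\to\id$. Since $g$ and $g^\prime$ are closed embeddings, $\rmR g_\ast\simeq\rmR g_!$ and $\rmR g^\prime_\ast\simeq\rmR g^\prime_!$. So every horizontal isomorphism in the bottom row is built from three ingredients: the composition isomorphism $\rmR f_!\rmR g^\prime_\ast\simeq\rmR g_\ast\rmR f^\prime_!$ (both equal $\rmR(f\circ g^\prime)_!$), the proper base change isomorphism $g^{-1}\rmR f_!\simeq\rmR f^\prime_!g^{\prime-1}$, and the smooth base change $g^{\prime-1}f^!\simeq f^{\prime!}g^{-1}$.

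For the left square, I first recall that the base change morphism $g^{-1}\rmR f_!\to\rmR f^\prime_!g^{\prime-1}$ is, by definition (as in \cite[Proposition 2.5.11]{KS90}), the adjoint transpose of
\begin{equation}
\rmR f_!\longrightarrow\rmR f_!\rmR g^\prime_\ast g^{\prime-1}\simeq\rmR g_\ast\rmR f^\prime_!g^{\prime-1}
\end{equation}
with respect to the adjunction $g^{-1}\dashv\rmR g_\ast$. Transposing back, the composite $\rmR f_!\to\rmR g_\ast g^{-1}\rmR f_!\simto\rmR g_\ast\rmR f^\prime_!g^{\prime-1}$ equals $\rmR f_!(\text{unit of }g^\prime)$ followed by the composition isomorphism. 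This is a triangle identity. Applying it to the object $f^!(\cdot)$ gives the commutativity of the left square.

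For the right square, the point is to fix the isomorphism $g^{\prime-1}f^!\simeq f^{\prime!}g^{-1}$ so that it is compatible with traces. I define it as the adjoint transpose, under $\rmR f^\prime_!\dashv f^{\prime!}$, of
\begin{equation}
\rmR f^\prime_!g^{\prime-1}f^!\simeq g^{-1}\rmR f_!f^!\longrightarrow g^{-1}.
\end{equation}
With this definition the right square commutes tautologically, again by a triangle identity. What remains is to check that this morphism is an isomorphism and agrees with the one used in Diagram 2.b of Proposition \ref{prop-intersection}, namely $g^{\prime-1}(f^{-1}\otimes\omega_{Y/X})\simeq f^{\prime-1}g^{-1}\otimes\omega_{Y^\prime/X^\prime}$, which uses $g^{\prime-1}\omega_{Y/X}\simeq\omega_{Y^\prime/X^\prime}$.

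I expect this identification of the two isomorphisms $g^{\prime-1}f^!\simeq f^{\prime!}g^{-1}$ to be the main obstacle. To handle it, I would use that both sides are locally isomorphic to $f^{\prime-1}g^{-1}(\cdot)\otimes\omega_{Y^\prime/X^\prime}$, so the question is local on $Y^\prime$ and reduces to a product situation $Y=X\times\RR^d$, $Y^\prime=X^\prime\times\RR^d$. There the counit $\rmR f_!f^!\to\id$ is integration along the fibres, $\rmR f_!\orsh_{Y/X}[d]\to k_X$. This map is computed fibrewise via $H^d_{\mathrm{c}}(\RR^d;\orsh)\simeq k$, and fibrewise computations are visibly compatible with restriction to $X^\prime$. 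The two candidate isomorphisms therefore induce the same morphism after applying $\rmR f^\prime_!$ and the trace. By adjunction they coincide, which completes the commutativity of the whole diagram.
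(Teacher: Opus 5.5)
Your argument is correct and is the paper's proof. The left square follows from the triangle identity for the proper base change, which the paper calls obvious. For the right square, you take $g^{\prime-1}f^!\simeq f^{\prime!}g^{-1}$ to be the adjoint transpose of $\rmR f^\prime_!g^{\prime-1}f^!\simeq g^{-1}\rmR f_!f^!\to g^{-1}$ and then apply a triangle identity, exactly as the paper does.

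The extra identification you single out as the main obstacle is not needed. The paper, following \cite{KS90}, takes exactly this adjoint transpose as the definition of the base change isomorphism, and it is invertible for smooth $f$ by the standard theory. Your local/fibrewise check of that identification would in any case need more care, since equality of morphisms in a derived category is not a local property.
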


\begin{proof}
    Since the commutativity of the left square is obvious, we shall prove that the diagram (\ref{eq-app3}) commutes.
    \begin{equation}\label{eq-app3}
    \begin{tikzcd}
        g^{-1} \rmR f_!f^! \ar[r] \ar[d,"\sim",sloped] &
        g^{-1} \\
        \rmR f_!^\prime g^{\prime-1} f^! \ar[r,"\sim"]  &
        \rmR f_!^\prime f^{\prime!} g^{-1}. \ar[u]
    \end{tikzcd}
    \end{equation}
    Note that the isomorphism $g^{\prime-1} f^!\simto f^{\prime!} g^{-1}$ is induced by
    \begin{equation}
        \rmR f_!^\prime g^{\prime-1} f^! \simot 
        g^{-1} \rmR f_!f^! \longrightarrow g^{-1}.
    \end{equation}
    As the adjunction isomorphism is determined by the unit, the bottom arrow of (\ref{eq-app3}) is 
    \begin{equation}
        \rmR f_!^{\prime}g^{\prime-1}f^! \longrightarrow 
        \rmR f_!^{\prime} f^{\prime !} \rmR f_!^{\prime}g^{\prime-1}f^! \simot
        \rmR f_!^{\prime} f^{\prime !} g^{-1} \rmR f_! f^!  \longrightarrow
        \rmR f_!^{\prime} f^{\prime !} g^{-1}.
    \end{equation}
    Then the commutativity of (\ref{eq-app3}) follows from the commutative diagram below.
    \begin{equation}
    \begin{tikzcd}
        &
        \rmR f_!^{\prime}g^{\prime-1}f^! &
        g^{-1} \rmR f_! f^! \ar[l,"\sim"'] \ar[r] &
        g^{-1} \\
        \rmR f_!^{\prime}g^{\prime-1}f^! \ar[r] \ar[ur,equal] &
        \rmR f_!^{\prime} f^{\prime !} \rmR f_!^{\prime}g^{\prime-1}f^! \ar[u] &
        \rmR f_!^{\prime} f^{\prime !} g^{-1} \rmR f_! f^! \ar[l,"\sim"'] \ar[r] \ar[u] &
        \rmR f_!^{\prime} f^{\prime !} g^{-1}. \ar[u] 
    \end{tikzcd}
    \end{equation}
\end{proof}

\section{Proof of Lemma \ref{ext-lem}}\label{app-proof}
In this appendix, we give a proof of Lemma \ref{ext-lem}. 
We shall use the notations in Lemma \ref{ex-lem}. 
First, we prepare the following result. 

\begin{lemma}\label{lem-tangent}
For a subanalytic submanifold $S \subset X$ of the real analytic manifold $X$,
its tangent bundle $TS \subset TX$ is subanalytic in $TX$. 
\end{lemma}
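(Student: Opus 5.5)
The plan is to express $TS$ as the closure-free image of a subanalytic set under a proper map, or more directly as a set defined by first-order conditions in the subanalytic structure. The question is local on $X$, so I will work in a chart and assume $X=\RR^n$ and $TX=\RR^n\times\RR^n$. Here $S\subset\RR^n$ is a subanalytic subset which is a $C^1$ (in fact real analytic) submanifold of dimension $d$.

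The cleanest route is the characterization of tangent vectors as limits of secant directions. A vector $(x,v)$ with $x\in S$ lies in $T_xS$ if and only if there exist $y\in S$ arbitrarily close to $x$ and $\lambda>0$ such that $\lambda(y-x)$ is arbitrarily close to $v$, or $v=0$. The forward implication uses that $S$ is a $C^1$ submanifold, so any $v\in T_xS$ is the velocity of a curve in $S$. The converse uses that limits of secants at a smooth point lie in the tangent space.

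Following this characterization, I would consider the subanalytic set
\begin{equation}
E\coloneq\Set*{(x,\lambda(y-x),s)}{x,y\in S,\ \lambda>0,\ s=\abs{y-x}}\subset \RR^n\times\RR^n\times\RR.
\end{equation}
It is subanalytic as the image of the subanalytic set $S\times S\times\RR_{>0}$ under a real analytic map. Properness is not automatic here, so I would instead invoke the fact that subanalytic sets in the relatively compact sense form an o-minimal structure; the cleanest formulation is that the globally subanalytic sets form one. To use this, I first replace $X$ by a relatively compact chart and compose with a semialgebraic diffeomorphism $\RR^n\simeq(-1,1)^n$, so that everything becomes globally subanalytic. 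Then $\overline{E}\cap\{s=0\}$ is globally subanalytic. Intersecting with $S\times\RR^n\times\{0\}$ and projecting away $s$ (a projection of a globally subanalytic set, which is allowed in the o-minimal structure), I obtain exactly $TS\setminus(\text{zero section})$, together with the zero section $S\times\{0\}$, which is subanalytic. Alternatively, $TS$ can be written directly by the first-order formula $\forall\epsilon>0\ \exists y\in S,\ \lambda>0:\ \abs{y-x}<\epsilon,\ \abs{\lambda(y-x)-v}<\epsilon$ (or $v=0$), and definability then follows from o-minimality.

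The main obstacle is the global-versus-local issue. Subanalytic sets are only closed under proper projections, and $TS$ is subanalytic in $TX$ only in the local sense near each point of $TX$, including points over $\overline{S}\setminus S$ and points at infinity in the fibres. I would handle this as follows. Near a point $x_0\in X$, take a relatively compact subanalytic neighbourhood; then $S\cap U$ is globally subanalytic after a semialgebraic compactification of the chart. For the fibre direction, I would note that $TS$ is conic, so it suffices to prove that $TS\cap\{\abs{v}\le 1\}$ is subanalytic, after which the cone over it is handled by the semialgebraic map $(x,v)\mapsto(x,v/(1-\abs{v}))$ on the unit ball. The verification of the secant-limit characterization at points of $S$ is routine, since $S$ is a submanifold.
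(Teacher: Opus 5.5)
Your proposal is correct, but it takes a genuinely different route from the paper.

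\textbf{The paper's route.} The paper argues by duality. For $x\in S$, the tangent space $T_xS$ is the annihilator of the conormal fibre, so $TS=TX|_S\setminus q(K_S)$. Here $K_S$ is the set of pairs $(v,\xi)$ over $S$ with $\xi$ a unit conormal covector and $\langle v,\xi\rangle\neq 0$. Subanalyticity then follows from three facts: the subanalyticity of $T^\ast_SX$ \cite[Proposition 8.3.1]{KS90}; the properness of the projection from the sphere bundle, which makes $q(K_S)$ the image of a subanalytic set under a proper map; and stability of subanalytic sets under complements.

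\textbf{Your route.} You use the secant (tangent-cone) description of $T_xS$ and definability in the o-minimal structure $\RR_{\mathrm{an}}$, after cutting down to a bounded chart.

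\textbf{What each buys.} Your argument avoids conormal bundles and Proposition 8.3.1 entirely. It works verbatim in any o-minimal structure, and even shows that tangent cones of arbitrary definable sets are definable. The cost is a heavier black box: full first-order closure of globally subanalytic sets, i.e. the o-minimality of $\RR_{\mathrm{an}}$, which ultimately rests on Gabrielov's complement theorem. The paper instead stays inside the classical calculus of proper images and complements recorded in \cite{KS90}. Its sphere-bundle-and-pairing template is also reused, with tangent and cotangent roles swapped, to show that $Z_S=p(W_S)$ is subanalytic in the proof of Lemma \ref{ext-lem}.

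\textbf{Two minor remarks.}
\begin{enumerate}
\item The conic reduction via $v\mapsto v/(1-\abs{v})$ and the worry about ``points at infinity in the fibres'' are unnecessary. Once $TS\cap(U\times\RR^n)=T(S\cap U)$ is cut out by a first-order formula over a bounded, hence globally subanalytic, $S\cap U$, it is globally subanalytic. It is therefore already subanalytic near every point of $U\times\RR^n$.
\item In your $\overline{E}$ version, the closure also lets the base point vary. Identifying $\overline{E}\cap(S\times\RR^n\times\{0\})$ with $TS$ therefore needs limits of $\lambda_k(y_k-x_k)$ with both $x_k,y_k\to x$ to lie in $T_xS$. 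This is true because $S$ is an embedded $C^1$ submanifold, but it is more than the converse you stated for secants from the fixed point $x$. Your direct formula with fixed base point $x$ avoids the issue.
\end{enumerate}
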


\begin{proof}
The problem being local, we can choose a real analytic metric of 
the vector bundle $T^\ast X$ 
on $X$ and define a sphere bundle $G^\ast (X) \subset T^\ast X$ in it. 
We set $G^\ast(S) \coloneq T_S^\ast X \cap G^\ast(X) \subset T^\ast X$. 
Then by \cite[Proposition 8.3.1]{KS90}, 
$G^\ast(S)$ is subanalytic in $T^\ast X$. 
Moreover, we define a subanalytic subset 
$TX|_S$ of $TX$ by $TX|_S \coloneq S\times_X TX \subset TX$. Let 
$h \colon TX \times_X T^\ast X \longrightarrow \mathbb{R}$ be the 
canonical pairing. 
Then the following subset of $TX|_S \times_X G^\ast(S)$
\begin{equation}
K_S \coloneq h^{-1} (\mathbb{R}\setminus \{ 0\}) \cap (TX|_S \times_X G^\ast(S))
\quad \subset TX|_S \times_X G^\ast(S)
\end{equation}
is subanalytic in $TX\times_X T^\ast X$.
Indeed, $TX|_S \times_X G^\ast(S)$ is the inverse image of the subanalytic 
subset $TX|_S \times G^\ast(S) \subset TX \times T^\ast X$ by the closed 
embedding $TX\times_X T^\ast X \longhookrightarrow TX\times T^\ast X$. 
Moreover, since the projection $G^\ast(S) \longrightarrow X$ is proper, 
the morphism $q \colon TX|_S \times_X G^\ast(S) \longrightarrow 
TX$ induced by it is also proper and hence 
$q(K_S) \subset TX$ is subanalytic in $TX$. 
On the other hand, we can easily see that 
\begin{equation}
TS=TX|_S \setminus q(K_S) \quad \subset TX. 
\end{equation}
This implies that $TS$ is subanalytic in $TX$.
\end{proof}

\begin{proof}
Let us start the proof of Lemma \ref{ext-lem}. 
For $S\in \mathcal{S}$ such that $S \subset \{f\geq 0\} = \{x\in X \mid f(x)\geq 0\}$,
we set $T_S^\ast (X/J) \coloneq \{ ((x;\xi), a) \in T^\ast X_0 \times J \mid  
(x,a) \in S \textrm{ and }\xi (v)=0 \textrm{ for all } v \in T_x (S\cap X_a)\} 
\subset T^\ast(X/(-\varepsilon, \varepsilon))$. 
Then we can easily see that 
\begin{equation}\label{eq-Xi}
    \Xi = \bigcup_{\substack{S\in \mathcal{S}\\ S\subset \{f\geq 0\}}} T_S^\ast (X/J).
\end{equation} 
Since $\mathcal{S}$ is locally finite, the right hand side of (\ref{eq-Xi}) 
is a locally finite union. 
Hence it suffices to show that 
$T_S^\ast (X/J) \subset T^\ast(X/(-\varepsilon, \varepsilon))$
is subanalytic in $T^\ast(X/(-\varepsilon, \varepsilon))$ 
for any stratum $S \in \mathcal{S}$ contained in $\{ f\geq0\}$.
For $S\in \mathcal{S}$ such that $S \subset \{f= 0\} = X_0$ 
this follows from \cite[Proposition 8.3.1]{KS90}. 
So we treat only $S\in \mathcal{S}$ such that $S \subset \{f > 0\}$. 
Fix such $S\in \mathcal{S}$ and 
consider the following two subsets of $T^\ast (X/(-\varepsilon, \varepsilon))$:
\begin{align}
    T^\ast(X/(-\varepsilon, \varepsilon))|_S \coloneq 
    S\times_X T^\ast (X/(-\varepsilon, \varepsilon)) \quad &\subset 
T^\ast (X/(-\varepsilon, \varepsilon)), \\
    Z_S \coloneq T^\ast(X/(-\varepsilon, \varepsilon))|_S \setminus 
T_S^\ast (X/J) \quad &\subset T^\ast (X/(-\varepsilon, \varepsilon)).
\end{align}
Then it is easy to see that 
$T^\ast(X/(-\varepsilon, \varepsilon))|_S$ is subanalytic in 
$T^\ast (X/(-\varepsilon, \varepsilon))$ and 
\begin{equation}
    T_S^\ast (X/J) =( T^\ast(X/(-\varepsilon, \varepsilon))|_S) \setminus Z_S.
\end{equation}
Thus we have only to show that 
$Z_S \subset T^\ast(X/(-\varepsilon, \varepsilon))$ is 
subanalytic in $T^\ast(X/(-\varepsilon, \varepsilon))$.
For this purpose,
first we define a real analytic subbundle 
$T(X/(-\varepsilon, \varepsilon)) \subset TX$
of $TX$ by
\begin{equation}
    T(X/(-\varepsilon, \varepsilon)) \coloneq \mathrm{Ker} (df) \quad \subset TX.
\end{equation}
The problem being local, as in the proof of Lemma \ref{lem-tangent},
we can take a sphere bundle $G(T(X/(-\varepsilon, \varepsilon))) 
\subset T(X/(-\varepsilon, \varepsilon)) \subset TX$ in it. 
Then it follows from Lemma \ref{lem-tangent} that 
$G(S)\coloneq TS \cap G(T(X/(-\varepsilon, \varepsilon))) 
\subset T(X/(-\varepsilon, \varepsilon))$
is subanalytic in $T(X/(-\varepsilon, \varepsilon))$. 
Let us consider the fiber product 
$T(X/(-\varepsilon, \varepsilon)) \times_X T^\ast (X/(-\varepsilon, \varepsilon))$ 
and its canonical pairing 
$g \colon T(X/(-\varepsilon, \varepsilon)) \times_X T^\ast (X/(-\varepsilon, 
\varepsilon)) \longrightarrow \mathbb{R}$ induced by the one  
$TX \times_X T^\ast X \longrightarrow \mathbb{R}$. 
Note that $g$ is well-defined, non-degenerate and real analytic.
Thus the following subset of $G(S)\times_X T^\ast(X/(-\varepsilon, \varepsilon))$
\begin{align}
    W_S &\coloneq g^{-1}(\mathbb{R} \setminus \{ 0\}) \cap (G(S)\times_X 
T^\ast(X/(-\varepsilon, \varepsilon)))\\
    & =\{ (x; v,\xi) \in G(S)\times_X T^\ast(X/(-\varepsilon, \varepsilon)) 
\mid g(x;v,\xi)\neq 0\} \subset G(S)\times_X T^\ast(X/(-\varepsilon, \varepsilon))
\end{align}
is subanalytic in 
$T(X/(-\varepsilon, \varepsilon)) \times_X T^\ast (X/(-\varepsilon, \varepsilon))$.
Indeed, $G(S)\times_X T^\ast(X/(-\varepsilon, \varepsilon))$
is the inverse image of the subanalytic subset $G(S)\times T^\ast(X/(
-\varepsilon, \varepsilon)) \subset T(X/(-\varepsilon, \varepsilon)) 
\times T^\ast (X/(-\varepsilon, \varepsilon))$
by the closed embedding $T(X/(-\varepsilon, \varepsilon)) \times_X 
T^\ast (X/(-\varepsilon, \varepsilon)) \longhookrightarrow T(X/(-\varepsilon, 
\varepsilon)) \times T^\ast (X/(-\varepsilon, \varepsilon))$. 
Moreover, since the projection $G(T(X/(-\varepsilon, \varepsilon))) 
\longrightarrow X$ is proper,
the morphism $p \colon G(T(X/(-\varepsilon, \varepsilon))) 
\times_X T^\ast(X/(-\varepsilon, \varepsilon)) \longrightarrow 
T^\ast (X/(-\varepsilon, \varepsilon))$ 
induced by it is also proper. This implies that $p(W_S) \subset 
T^\ast (X/(-\varepsilon, \varepsilon))$ is subanalytic in 
$T^\ast (X/(-\varepsilon, \varepsilon))$. We can easily see that 
\begin{equation}
p(W_S)=Z_S \quad \subset T^\ast (X/(-\varepsilon, \varepsilon)).
\end{equation}
Therefore, $Z_S$ is subanalytic in $T^\ast(X/(-\varepsilon, \varepsilon))$. 
This completes the proof.
\end{proof}

\end{document}